\theoremstyle{theorem}
\newtheorem{theorem}[subsection]{Theorem}
\newtheorem{corollary}[subsection]{Corollary}
\newtheorem{lemma}[subsection]{Lemma}
\newtheorem{proposition}[subsection]{Proposition}
\theoremstyle{definition}
\newtheorem{definition}[subsection]{Definition}
\newtheorem{example}[subsection]{Example}
\newtheorem{remark}[subsection]{Remark}
\newtheorem*{acknowledgement}{Acknowledgements}
\newtheorem*{notation}{Notation}
\numberwithin{figure}{section}
\numberwithin{equation}{section}
\newcommand{\ca}{{\mathcal A}}
\newcommand{\cb}{{\mathcal B}}
\newcommand{\cc}{{\mathcal C}}
\newcommand{\cd}{{\mathcal D}}
\newcommand{\ce}{{\mathcal E}}
\newcommand{\cv}{{\mathcal V}}
\newcommand{\cw}{{\mathcal W}}
\newcommand{\unito}{{\mathds 1}}
\renewcommand\subsection{\@startsection{subsection}{2}{\z@}%
  {\topsep}%
  {-1.5ex \@plus -.2ex}%
  {\normalfont\normalsize\bfseries}}
\renewcommand\paragraph{\@startsection{paragraph}{4}{\z@}%
  {\topsep}%
  {-1em}%
  {\normalfont\normalsize\bfseries}}
\let\rto\xrightarrow
\let\tens\otimes
\let\und\underline
\let\wh\widehat
\newcommand\NN{{\mathbb N}}
\newcommand{\Chi}{{\mathrm{X}}}
\newcommand{\n}[1]{\nobreakdash-\hspace{0pt}}
\newcommand{\ainf}[1]{$A_\infty$\nobreakdash-\hspace{0pt}}
\newcommand\mcC{{\mathsf C}}
\newcommand\mcD{{\mathsf D}}
\newcommand\mcE{{\mathsf E}}
\newcommand\mcV{{\mathsf V}}
\newcommand\mcW{{\mathsf W}}
\newcommand{\Cat}{{\mathbf{Cat}}}
\newcommand{\Set}{{\mathcal{S}}}
\newcommand{\Multicat}{{\mathbf{Multicat}}}
\newcommand{\ClCat}{{\mathbf{ClCat}}}
\newcommand{\ClMulticat}{{\mathbf{ClMulticat}}}
\newcommand{\VCat}{{\cv\text-\Cat}}
\DeclareMathOperator\ev{ev}
\DeclareMathOperator\Hom{Hom}
\DeclareMathOperator\id{id}
\DeclareMathOperator\Id{Id}
\DeclareMathOperator\Ob{Ob}
\newcommand{\op}{{\operatorname{op}}}
\newcommand{\defeq}{\overset{\textup{def}}{=}}
\newcommand{\corref}[1]{Corollary~\ref{#1}}
\newcommand{\defref}[1]{Definition~\ref{#1}}
\newcommand{\diaref}[1]{Diagram~\ref{#1}}
\newcommand{\exaref}[1]{Example~\ref{#1}}
\newcommand{\lemref}[1]{Lemma~\ref{#1}}
\newcommand{\propref}[1]{Proposition~\ref{#1}}
\newcommand{\remref}[1]{Remark~\ref{#1}}
\newcommand{\thmref}[1]{Theorem~\ref{#1}}
\begin{document}
\bibliographystyle{amsplain}

\title{Closed categories vs. closed multicategories} 
\author{Oleksandr Manzyuk} 

\maketitle

\begin{abstract}
  We prove that the 2\n-category of closed categories of Eilenberg and
  Kelly is equivalent to a suitable full 2\n-subcategory of the
  2\n-category of closed multicategories.
\end{abstract}

\section{Introduction}

The notion of closed category was introduced by Eilenberg and Kelly
\cite{EK}. It is an axiomatization of the notion of category with
internal function spaces. More precisely, a closed category is a
category \(\cc\) equipped with a functor
\(\und\cc(-,-):\cc^\op\times\cc\to\cc\), called the \emph{internal
  \(\Hom\)\n-functor}; an object \(\unito\) of \(\cc\), called the
\emph{unit object}; a natural isomorphism
\(i_X:X\rto\sim\und\cc(\unito,X)\), and natural transformations
\(j_X:\unito\to\und\cc(X,X)\) and
\(L^X_{YZ}:\und\cc(Y,Z)\to\und\cc(\und\cc(X,Y),\und\cc(X, Z))\). These
data are to satisfy five axioms; see \defref{def:closed-category} for
details.

A wide class of examples is provided by closed monoidal categories. We
recall that a monoidal category \(\cc\) is called \emph{closed} if for
each object \(X\) of \(\cc\) the functor \(X\tens-\) admits a right
adjoint \(\und\cc(X,-)\); i.e, there exists a bijection \(\cc(X\tens
Y,Z)\cong\cc(Y,\und\cc(X,Z))\) that is natural in both \(Y\) and
\(Z\). Equivalently, a monoidal category \(\cc\) is closed if and only
if for each pair of objects \(X\) and \(Z\) of \(\cc\) there exist an
\emph{internal \(\Hom\)\n-object} \(\und\cc(X,Z)\) and an
\emph{evaluation} morphism \(\ev^\cc_{X,Z}:X\tens\und\cc(X,Z)\to Z\)
satisfying the following universal property: for each morphism
\(f:X\tens Y\to Z\) there exists a unique morphism
\(g:Y\to\und\cc(X,Z)\) such that \(f=\ev^\cc_{X,Z}\circ(1_X\tens
g)\). One can check that the map \((X,Z)\mapsto\und\cc(X,Z)\) extends
uniquely to a functor \(\und\cc(-,-):\cc^\op\times\cc\to\cc\), which
together with certain canonically chosen transformations \(i_X\),
\(j_X\), and \(L^X_{YZ}\) turns \(\cc\) into a closed category.

While closed monoidal categories are in prevalent use in mathematics,
arising in category theory, algebra, topology, analysis, logic, and
theoretical computer science, there are also important examples of
closed categories that are not monoidal. The author's motivation
stemmed from the theory of \ainf-categories.

The notion of \ainf-category appeared at the beginning of the nineties
in the work of Fukaya on Floer homology~\cite{Fukaya:A-infty}. However
its precursor, the notion of \ainf-algebra, was introduced in the
early sixties by Stasheff~\cite{Stasheff:HomAssoc}. It as a
linearization of the notion of \ainf-space, a topological space
equipped with a product operation which is associative up to homotopy,
and the homotopy which makes the product associative can be chosen so
that it satisfies a collection of higher coherence conditions. Loosely
speaking, \ainf-categories are to \ainf-algebras what linear
categories are to algebras. On the other hand, \ainf-categories
generalize differential graded categories. Unlike in differential
graded categories, in \ainf-categories composition need not be
associative on the nose; it is only required to be associative up to
homotopy that satisfies a certain equation up to another homotopy, and
so on.

Many properties of \ainf-categories follow from the discovery,
attributed to Kontsevich, that for each pair of \ainf-categories
\(\ca\) and \(\cb\) there is a natural \ainf-category
\(A_\infty(\ca,\cb)\) with \ainf-functors from \(\ca\) to \(\cb\) as
its objects.  These \ainf-categories of \ainf-functors were also
investigated by many other authors,
e.g. Fukaya~\cite{Fukaya:FloerMirror-II},
Lef\`evre-Hasegawa~\cite{Lefevre-Ainfty-these}, and
Lyubashenko~\cite{Lyu-AinfCat}; they allow us to equip the category of
\ainf-categories with the structure of a closed category.

In the recent monograph by Bespalov, Lyubashenko, and the author
\cite{BLM} the theory of \ainf-categories is developed from a slightly
different perspective. Our approach is based on the observation that
although the category of \ainf-categories is not monoidal, there is a
natural notion of \ainf-functor of many arguments, and thus
\ainf-categories form a \emph{multicategory}. The notion of
multicategory (known also as colored operad or pseudo-tensor category)
was introduced by Lambek \cite{Lambek:DedSys,Lambek:Multi}. It is a
many\n-object version of the notion of operad. If morphisms in a
category are considered as analogous to functions, morphisms in a
multicategory are analogous to functions in several variables. The
most familiar example of multicategory is the multicategory of vector
spaces and multilinear maps. An arrow in a multicategory looks like
\(X_1,X_2,\dots,X_n\to Y\), with a finite sequence of objects as the
domain and one object as the codomain.  Multicategories generalize
monoidal categories: monoidal category \(\mathcal C\) gives rise to a
multicategory \(\widehat{\mathcal C}\) whose objects are those of
\(\mathcal C\) and whose morphisms \(X_1,X_2,\dots,X_n\to Y\) are
morphisms \(X_1\otimes X_2\otimes\dots\otimes X_n\to Y\) of \(\mathcal
C\). The notion of closedness for multicategories is a straightforward
generalization of that for monoidal categories. We say that a
multicategory \(\mcC\) is \emph{closed} if for each sequence
\(X_1,\dots,X_m,Z\) of objects of \(\mcC\) there exist an internal
\(\Hom\)\n-object \(\und\mcC(X_1,\dots,X_m;Z)\) and an evaluation
morphism \( \ev^\mcC_{X_1,\dots,X_m;Z}: X_1,\dots,X_m,
\und\mcC(X_1,\dots,X_m;Z)\to Z \) satisfying the following universal
property: for each morphism \(f:X_1,\dots,X_m,Y_1,\dots,Y_n\to Z\)
there is a unique morphism
\(g:Y_1,\dots,Y_n\to\und\mcC(X_1,\dots,X_m;Z)\) such that
\(f=\ev^\mcC_{X_1,\dots,X_m;Z}\circ(1_{X_1},\dots,1_{X_m},g)\).  We
prove that the multicategory of \ainf-categories is closed, thus
obtaining a conceptual explanation of the origin of the
\ainf-categories of \ainf-functors.

The definition of closed multicategory seems to be sort of a
mathematical folklore, and to the best of the author's knowledge it
did not appear in press before \cite{BLM}. The only reference the
author is aware of is the paper of Hyland and Power on pseudo-closed
2\n-categories \cite{HylandPower}, where the notion of closed
\(\Cat\)\n-multicategory (i.e., multicategory enriched in the category
\(\Cat\) of categories) is implicitly present, although not spelled
out.

This paper arose as an attempt to understand in general the relation
between closed categories and closed multicategories. It turned out
that these notions are essentially equivalent in a very strong
sense. Namely, on the one hand, there is a 2\n-category of closed
categories, closed functors, and closed natural transformations. On
the other hand, there is a 2\n-category of closed
multi\-ca\-te\-go\-ries with unit objects, multifunctors, and
multinatural transformations. Because a 2\n-category is the same thing
as a category enriched in \(\Cat\), it makes sense to speak about
\(\Cat\)\n-functors between 2\n-categories; these can be called strict
2\n-functors because they preserve composition of 1\n-morphisms and
identity 1\n-morphisms strictly. We construct a \(\Cat\)\n-functor
from the 2\n-category of closed multicategories with unit objects to
the 2\n-category of closed categories, and prove that it is a
\(\Cat\)\n-equivalence; see \propref{prop-Cat-fun-cl-multi-cl-cat} and
\thmref{thm-equiv}.

Both closed categories and multicategories can bear symmetries. With
some additional work it can be proven that the 2\n-category of
symmetric closed categories is \(\Cat\)\n-equivalent to the
2\n-category of symmetric closed multicategories with unit objects. We
are not going to explore this subject here.

We should mention that the definition of closed category we adopt in
this paper does not quite agree with the definition appearing in
\cite{EK}. Closed categories have been generalized by Street
\cite{Street:cosmoi} to extension systems; a closed category in our
sense is an extension system with precisely one object.  We discuss
carefully the relation between these definitions because it is crucial
for our proof of \thmref{thm-equiv}; see \remref{rem-cl-cat-EK} and
\propref{thm-cl-cat-iso-EK}. Our definition of closed category also
coincides with the definition appearing in Laplaza's paper
\cite{Laplaza}, to which we would like to pay special tribute because
it allowed us to give an elegant construction of a closed
multicategory with a given underlying closed category.

\begin{notation}
  We use interchangeably the notations \(g\circ f\) and \(f\cdot g\)
  for the composition of morphisms \(f:X\to Y\) and \(g:Y\to Z\) in a
  category, giving preference to the latter notation, which is more
  readable. Throughout the paper the set of nonnegative integers is
  denoted by \(\NN\), the category of sets is denoted by \(\Set\), and
  the category of categories is denoted by \(\Cat\).
\end{notation}

\begin{acknowledgement}
  I would like to thank Volodymyr Lyubashenko and Yuri Bespalov for
  many fruitful discussions. This work was written up during my stay
  at York University. I would like to thank Professor Walter Tholen
  for inviting me to York and for carefully reading preliminary
  versions of this paper.
\end{acknowledgement}

\section{Closed categories}
\label{sec:closed-categories}

In this section we give preliminaries on closed categories.  We begin
by recalling the definition of closed category appearing in
\cite[Section~4]{Street:cosmoi} and \cite{Laplaza}.

\begin{definition}
  \label{def:closed-category}
  A \emph{closed category} \((\cc,\und\cc(-,-),\unito,i,j,L)\) consists of the following data:
  \begin{itemize}
  \item a category \(\cc\);
  \item a functor \(\und\cc(-,-):\cc^\op\times\cc\to\cc\);
  \item an object \(\unito\) of \(\cc\);
  \item a natural isomorphism
    \(i:\Id_\cc\rto\sim\und\cc(\unito,-):\cc\to\cc\);
  \item a transformation \(j_X:\unito\to\und\cc(X,X)\), dinatural in
    \(X\in\Ob\cc\);
  \item a transformation
    \(L^X_{YZ}:\und\cc(Y,Z)\to\und\cc(\und\cc(X,Y),\und\cc(X,Z))\),
    natural in \(Y,Z\in\Ob\cc\)  and dinatural in \(X\in\Ob\cc\).
  \end{itemize}
  These data are subject to the following axioms.
  \begin{list}{}{%
      \setlength{\labelwidth}{3.5em}
      \setlength{\leftmargin}{4em}}
  \item[CC1.] The following equation holds true:
    \begin{equation*}
      \bigl[
      \unito\rto{j_Y}\und\cc(Y,Y)\rto{L^X_{YY}}\und\cc(\und\cc(X,Y),\und\cc(X,Y))
      \bigr]=j_{\und\cc(X,Y)}.
    \end{equation*}

  \item[CC2.] The following equation holds true:
    \begin{equation*}
      \bigl[
      \und\cc(X,Y)\rto{L^X_{XY}}\und\cc(\und\cc(X,X),\und\cc(X,Y))\rto{\und\cc(j_X,1)}\und\cc(\unito,\und\cc(X,Y))
      \bigr]=i_{\und\cc(X,Y)}.
    \end{equation*}

  \item[CC3.] The following diagram commutes:
    \begin{equation*}
      \begin{xy}
        (-45,18)*+{\und\cc(U,V)}="1";
        (45,18)*+{\und\cc(\und\cc(Y,U),\und\cc(Y,V))}="2";
        (-45,0)*+{\und\cc(\und\cc(X,U),\und\cc(X,V))}="3";
        (-45,-18)*+{\und\cc(\und\cc(\und\cc(X,Y),\und\cc(X,U)),\und\cc(\und\cc(X,Y),\und\cc(X,V)))}="4";
        (45,-18)*+{\und\cc(\und\cc(Y,U),\und\cc(\und\cc(X,Y),\und\cc(X,V)))}="5";
        {\ar@{->}^-{L^Y_{UV}} "1";"2"};
        {\ar@{->}_-{L^X_{UV}} "1";"3"};
        {\ar@{->}^-{\und\cc(1,L^X_{YV})} "2";"5"};
        {\ar@{->}_-{L^{\und\cc(X,Y)}_{\und\cc(X,U),\und\cc(X,V)}} "3";"4"};
        {\ar@{->}^-{\und\cc(L^X_{YU},1)} "4";"5"};
      \end{xy}
    \end{equation*}

  \item[CC4.] The following equation holds true:
    \begin{equation*}
      \bigl[
      \und\cc(Y,Z)\rto{L^\unito_{YZ}}\und\cc(\und\cc(\unito,Y),\und\cc(\unito,Z))
      \rto{\und\cc(i_Y,1)}\und\cc(Y,\und\cc(\unito,Z))
      \bigr]=\und\cc(1,i_Z).
    \end{equation*}

  \item[CC5.] The map \(\gamma:\cc(X,Y)\to\cc(\unito,\und\cc(X,Y))\)
    that sends a morphism \(f:X\to Y\) to the composite
    \begin{equation*}
      \unito\rto{j_X}\und\cc(X,X)\rto{\und\cc(1,f)}\und\cc(X,Y)
    \end{equation*}
    is a bijection.
  \end{list}
  We shall call \(\und\cc(-,-)\) the \emph{ internal
    \(\Hom\)\n-functor} and \(\unito\) the \emph{unit object}.
\end{definition}

\begin{example}\label{exa-set-closed-cat}
  The category \(\Set\) of sets becomes a closed category if we set
  \(\und\Set(-,-)=\Set(-,-)\); take for \(\unito\) a set \(\{*\}\),
  chosen once and for all, consisting of a single point \(*\); and
  define \(i\), \(j\), \(L\) by:
  \begin{alignat*}{3}
    i_X(x)(*) & =x, & \qquad & x\in X;\\
    j_X(*) & =1_X; && \\
    L^X_{YZ}(g)(f) & = f\cdot g, & \qquad & f\in\Set(X,Y),\quad
    g\in\Set(Y,Z).
  \end{alignat*}
\end{example}

\begin{remark}\label{rem-cl-cat-EK}
  Definition~\ref{def:closed-category} is slightly different from the
  original definition by Eilenberg and Kelly
  \cite[Section~2]{EK}. They require that a closed category \(\cc\) be
  equipped with a functor \(C:\cc\to\Set\) such that the following
  axioms are satisfied in addition to CC1--CC4.
  \begin{list}{}{%
      \setlength{\labelwidth}{3.5em}
      \setlength{\leftmargin}{4em}}
  \item[CC0.] The following diagram of functors commutes:
    \[
    \begin{xy}
      (-17,9)*+{\cc^\op\times\cc}="1"; 
      (17,9)*+{\cc}="2";
      (17,-9)*+{\Set}="3"; 
      {\ar@{->}^-{\und\cc(-,-)} "1";"2"};
      {\ar@{->}_-{\cc(-,-)} "1";"3"}; 
      {\ar@{->}^-{C} "2";"3"};
    \end{xy}
    \]

  \item[CC5'.] The map
    \[
    Ci_{\und\cc(X,X)}:\cc(X,X)=C\und\cc(X,X)\to C\und\cc(\unito,\und\cc(X,X))=\cc(\unito,\und\cc(X,X))
    \]
    sends \(1_X\in\cc(X,X)\) to \(j_X\in\cc(\unito,\und\cc(X,X))\).
  \end{list}
  \cite[Lemma~2.2]{EK} implies that
  \[
  \gamma=Ci_{\und\cc(X,Y)}:\cc(X,Y)=C\und\cc(X,Y)\to
  C\und\cc(\unito,\und\cc(X,Y))=\cc(\unito,\und\cc(X,Y)),
  \]
  so that a closed category in the sense of Eilenberg and Kelly is
  also a closed category in our sense. Furthermore, as we shall see
  later, an arbitrary closed category in our sense is isomorphic to a
  closed category in the sense of Eilenberg and Kelly.
\end{remark}

\begin{proposition}[{\cite[Proposition~2.5]{EK}}]\label{prop-iC1X-C1iX}
  \(i_{\und\cc(\unito,X)}=\und\cc(1,i_X):
  \und\cc(\unito,X)\to\und\cc(\unito,\und\cc(\unito,X))\).
\end{proposition}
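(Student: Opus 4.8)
The plan is to derive this identity directly from the naturality of the isomorphism \(i\), without invoking any of the axioms CC1--CC5. The crucial observation is that each component \(i_X\colon X\to\und\cc(\unito,X)\) is itself an honest morphism of \(\cc\), so nothing prevents us from applying the naturality of \(i\) \emph{to \(i_X\) itself}. This self-application, followed by cancelling the isomorphism, is what does all the work.

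Concretely, I would first record the naturality square of the natural transformation \(i\colon\Id_\cc\rto\sim\und\cc(\unito,-)\) for an arbitrary morphism \(g\colon X\to Y\), remembering that the functor \(\und\cc(\unito,-)\) sends \(g\) to \(\und\cc(1,g)\); the square asserts
\begin{equation*}
  \und\cc(1,g)\circ i_X=i_Y\circ g.
\end{equation*}
I would then specialize to \(g=i_X\colon X\to\und\cc(\unito,X)\), so that the codomain \(Y\) becomes \(\und\cc(\unito,X)\) and the component of \(i\) on the right-hand vertical is evaluated at that object. The naturality square now reads
\begin{equation*}
  \und\cc(1,i_X)\circ i_X=i_{\und\cc(\unito,X)}\circ i_X.
\end{equation*}
Since \(i\) is a natural \emph{isomorphism}, the component \(i_X\) is invertible, hence in particular an epimorphism, and cancelling it on the right yields \(\und\cc(1,i_X)=i_{\und\cc(\unito,X)}\), which is exactly the claimed equality of morphisms \(\und\cc(\unito,X)\to\und\cc(\unito,\und\cc(\unito,X))\).

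I expect no real obstacle here; the argument is essentially a one-liner once one thinks to feed \(i_X\) into the naturality of \(i\). The only points that require a moment of care are bookkeeping: that the covariant functor \(\und\cc(\unito,-)\) acts on \(i_X\) as \(\und\cc(1,i_X)\) (the contravariant slot being fixed at \(\unito\) and contributing the identity), and that the object at which \(i\) is evaluated on the right vertical of the square is precisely the codomain \(\und\cc(\unito,X)\) of \(i_X\). The hypothesis that \(i\) is a natural isomorphism, rather than merely a natural transformation, is exactly what legitimizes the final cancellation.
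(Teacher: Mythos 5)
Your argument is correct and is essentially the paper's own proof: the paper defers to \cite[Proposition~2.5]{EK}, and the proof there is precisely this one --- apply the naturality of \(i\) to the morphism \(i_X\) itself and cancel the isomorphism \(i_X\). No gaps; the bookkeeping points you flag (that \(\und\cc(\unito,-)\) sends \(i_X\) to \(\und\cc(1,i_X)\), and that the right-hand component is taken at the codomain \(\und\cc(\unito,X)\)) are handled correctly.
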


\begin{proof}
  The proof given in \cite[Proposition~2.5]{EK} translates word by word to our setting.
\end{proof}

\begin{proposition}[{\cite[Proposition~2.7]{EK}}]\label{prop-j1-i1}
  \(j_\unito=i_\unito:\unito\to\und\cc(\unito,\unito)\).
\end{proposition}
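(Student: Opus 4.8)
The plan is to deduce the identity from the bijectivity asserted in CC5. Both $j_\unito$ and $i_\unito$ are morphisms $\unito\to\und\cc(\unito,\unito)$, hence elements of $\cc(\unito,\und\cc(\unito,\unito))$, which is precisely the codomain of the map $\gamma\colon\cc(\unito,\und\cc(\unito,\unito))\to\cc(\unito,\und\cc(\unito,\und\cc(\unito,\unito)))$ obtained from CC5 by taking $X=\unito$ and $Y=\und\cc(\unito,\unito)$. Since this $\gamma$ is a bijection, in particular injective, it suffices to prove the \emph{a priori} weaker equation $\gamma(j_\unito)=\gamma(i_\unito)$. Throughout I use the diagrammatic composition $f\cdot g$ (first $f$, then $g$), as in the statements of CC1--CC5.

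Set $u=\und\cc(\unito,\unito)$ and recall that $\gamma(f)=j_\unito\cdot\und\cc(1,f)$ for $f\colon\unito\to u$. I would first rewrite both values of $\gamma$ using the dinaturality of $j$, which for a morphism $f\colon A\to B$ reads $j_A\cdot\und\cc(1_A,f)=j_B\cdot\und\cc(f,1_B)$. Applying this with $A=\unito$, $B=u$, once to $f=j_\unito$ and once to $f=i_\unito$, converts the target identity into $j_u\cdot\und\cc(j_\unito,1)=j_u\cdot\und\cc(i_\unito,1)$; for this it is enough to establish the sharper equality $\und\cc(j_\unito,1)=\und\cc(i_\unito,1)$ of morphisms $\und\cc(u,u)\to\und\cc(\unito,u)$, since precomposing it with $j_u$ reproduces $\gamma(j_\unito)=\gamma(i_\unito)$.

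To obtain this sharper equality I would compare two axiom instances that both compute $\und\cc(1,i_\unito)$. On the one hand, CC2 with $X=Y=\unito$ gives $L^\unito_{\unito\unito}\cdot\und\cc(j_\unito,1)=i_{\und\cc(\unito,\unito)}$, and \propref{prop-iC1X-C1iX} with $X=\unito$ identifies $i_{\und\cc(\unito,\unito)}$ with $\und\cc(1,i_\unito)$. On the other hand, CC4 with $Y=Z=\unito$ reads $L^\unito_{\unito\unito}\cdot\und\cc(i_\unito,1)=\und\cc(1,i_\unito)$. Thus the two composites $L^\unito_{\unito\unito}\cdot\und\cc(j_\unito,1)$ and $L^\unito_{\unito\unito}\cdot\und\cc(i_\unito,1)$ are equal, both being $\und\cc(1,i_\unito)$.

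The delicate point, which I expect to be the only genuine obstacle, is the cancellation of $L^\unito_{\unito\unito}$ from this equation. It is resolved by noting that $L^\unito_{YZ}$ is always invertible: rearranging CC4 exhibits it as $\und\cc(1,i_Z)\cdot\und\cc(i_Y,1)^{-1}$, a composite of isomorphisms because $i$ is a natural isomorphism and $\und\cc(-,-)$ preserves isomorphisms in each variable. Since $L^\unito_{\unito\unito}$ is therefore invertible, it can be cancelled, yielding $\und\cc(j_\unito,1)=\und\cc(i_\unito,1)$; tracing back through the two reductions above gives $\gamma(j_\unito)=\gamma(i_\unito)$, and the injectivity of $\gamma$ furnished by CC5 finally forces $j_\unito=i_\unito$. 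The only other step requiring attention is keeping the dinaturality square for $j$ in the correct orientation, namely $j_A\cdot\und\cc(1_A,f)=j_B\cdot\und\cc(f,1_B)$, so that the passage from $\gamma$ to the $\und\cc(-,1)$ form is valid.
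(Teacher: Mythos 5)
Your proof is correct, and it reaches $\gamma(j_\unito)=\gamma(i_\unito)$ by a genuinely different route from the paper's. The paper's argument is a direct chain: $\gamma(i_\unito)=j_\unito\cdot\und\cc(1,i_\unito)=j_\unito\cdot i_{\und\cc(\unito,\unito)}$ (by \propref{prop-iC1X-C1iX}), then CC2 replaces $i_{\und\cc(\unito,\unito)}$ by $L^\unito_{\unito\unito}\cdot\und\cc(j_\unito,1)$, then CC1 absorbs $j_\unito\cdot L^\unito_{\unito\unito}$ into $j_{\und\cc(\unito,\unito)}$, and dinaturality of $j$ closes the loop; CC4 is never invoked. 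You instead avoid CC1 entirely: you use dinaturality of $j$ at the outset to rewrite both sides in the form $j_{\und\cc(\unito,\unito)}\cdot\und\cc(-,1)$, then compare the two axiom instances $L^\unito_{\unito\unito}\cdot\und\cc(j_\unito,1)=\und\cc(1,i_\unito)$ (CC2 plus \propref{prop-iC1X-C1iX}) and $L^\unito_{\unito\unito}\cdot\und\cc(i_\unito,1)=\und\cc(1,i_\unito)$ (CC4), and cancel $L^\unito_{\unito\unito}$ after observing that CC4 exhibits it as a composite of isomorphisms. All the individual steps check out: the dinaturality square is oriented correctly, the two axiom instances live at the same objects, and the invertibility of $L^\unito_{YZ}$ is a legitimate (and independently useful) consequence of CC4. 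What your route buys is the sharper intermediate identity $\und\cc(j_\unito,1)=\und\cc(i_\unito,1)$ and the observation that $L^\unito$ is invertible; what it costs is an extra lemma-like step (the invertibility argument) that the paper's shorter CC1-based computation does not need. Both arguments rest on CC5 for the final injectivity step, which is the point the paper emphasizes, since the Eilenberg--Kelly proof uses CC5$'$ instead.
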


\begin{proof}
  The proof given in \cite[Proposition~2.7]{EK} relies on the axiom
  CC5', and thus is not applicable here; we give an independent proof.
  The map \( \gamma:\cc(\unito,\und\cc(\unito,\unito))\to
  \cc(\unito,\und\cc(\unito,\und\cc(\unito,\unito))) \) is a bijection
  by the axiom CC5, so it suffices to prove that
  \(\gamma(j_\unito)=\gamma(i_\unito)\). We have:
  \begin{alignat*}{3}
    \gamma(i_\unito)&=\bigl[\unito\rto{j_\unito}\und\cc(\unito,\unito)\rto{\und\cc(1,i_\unito)}
    \und\cc(\unito,\und\cc(\unito,\unito))\bigr] & &
    \\
    &=\bigl[\unito\rto{j_\unito}\und\cc(\unito,\unito)\rto{i_{\und\cc(\unito,\unito)}}
    \und\cc(\unito,\und\cc(\unito,\unito))\bigr] &\quad &
    \textup{(\propref{prop-iC1X-C1iX})}
    \\
    &=\bigl[
    \unito\rto{j_\unito}\und\cc(\unito,\unito)\rto{L^\unito_{\unito\unito}}
    \und\cc(\und\cc(\unito,\unito),\und\cc(\unito,\unito))
    \rto{\und\cc(j_\unito,1)}\und\cc(\unito,\und\cc(\unito,\unito))
    \bigr] & \quad & \textup{(axiom CC2)}
    \\
    &=\bigl[
    \unito\rto{j_{\und\cc(\unito,\unito)}}\und\cc(\und\cc(\unito,\unito),\und\cc(\unito,\unito))
    \rto{\und\cc(j_\unito,1)}\und\cc(\unito,\und\cc(\unito,\unito))
    \bigr] & \quad & \textup{(axiom CC1)}
    \\
    &=\bigl[
    \unito\rto{j_\unito}\und\cc(\unito,\unito)\rto{\und\cc(1,j_\unito)}\und\cc(\unito,\und\cc(\unito,\unito))
    \bigr] & \quad & \textup{(dinaturality of \(j\))}
    \\
    &=\gamma(j_\unito).
  \end{alignat*}
  The proposition is proven.
\end{proof}

\begin{corollary}\label{cor-gamma-C-1i-1}
  \(\bigl[ \cc(\unito,X) \rto{\gamma} \cc(\unito,\und\cc(\unito,X))
  \rto{\cc(\unito,i^{-1}_X)} \cc(\unito,X) \bigr]=1_{\cc(\unito,X)}\).
\end{corollary}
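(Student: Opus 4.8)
The plan is to trace through the definitions and reduce the claim to the just-proved identity \(j_\unito=i_\unito\) together with the naturality of \(i\). Take an arbitrary morphism \(g:\unito\to X\) in \(\cc(\unito,X)\); it suffices to show that the displayed composite returns \(g\). Unfolding the definition of \(\gamma\) from axiom CC5 (with the roles of \(X\) and \(Y\) played by \(\unito\) and \(X\)), the element \(\gamma(g)\in\cc(\unito,\und\cc(\unito,X))\) is the composite
\begin{equation*}
  \unito\rto{j_\unito}\und\cc(\unito,\unito)\rto{\und\cc(1,g)}\und\cc(\unito,X),
\end{equation*}
and applying \(\cc(\unito,i^{-1}_X)\) postcomposes this with \(i^{-1}_X:\und\cc(\unito,X)\to X\). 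So I must prove that
\begin{equation*}
  \bigl[\unito\rto{j_\unito}\und\cc(\unito,\unito)\rto{\und\cc(1,g)}\und\cc(\unito,X)\rto{i^{-1}_X}X\bigr]=g.
\end{equation*}

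The key step is to recognize the naturality square of \(i:\Id_\cc\rto\sim\und\cc(\unito,-)\) at the morphism \(g:\unito\to X\), which reads \(j_\unito\cdot\und\cc(1,g)=i_\unito\cdot\und\cc(1,g)=g\cdot i_X\); here the first equality is \propref{prop-j1-i1} and the second is the naturality of \(i\). Substituting this into the composite above yields \(g\cdot i_X\cdot i^{-1}_X\), and since \(i\) is an isomorphism this collapses to \(g\cdot 1_X=g\), as required. Thus the corollary follows by a direct chain of three observations: unfold \(\gamma\), invoke \(j_\unito=i_\unito\) to convert the leading \(j_\unito\) into \(i_\unito\), and cancel \(i_X\) against its inverse using naturality.

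The only point that requires care, rather than any genuine obstacle, is getting the variance of the naturality of \(i\) correct and applying it to \(g\) regarded as a morphism out of the unit object \(\unito\); it is precisely because the source of \(g\) is \(\unito\) that the component \(i_\unito\) appears and can be identified with \(j_\unito\). I expect no further subtlety: once the naturality square is written down in the composition convention of the paper, the computation is immediate.
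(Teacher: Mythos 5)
Your proof is correct and uses exactly the two ingredients of the paper's own argument: unfolding \(\gamma\), the identity \(j_\unito=i_\unito\) from \propref{prop-j1-i1}, and the naturality of \(i\). The only (immaterial) difference is that the paper applies naturality of \(i^{-1}\) to slide \(f\) to the right and cancel \(i_\unito\cdot i^{-1}_\unito\), whereas you apply naturality of \(i\) to slide \(g\) to the left and cancel \(i_X\cdot i^{-1}_X\); these are the same naturality square read in opposite directions.
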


\begin{proof}
  An element \(f\in\cc(\unito,X)\) is mapped by the left hand side to the
  composite
  \[
  \unito \rto{j_\unito} \und\cc(\unito,\unito) \rto{\und\cc(1,f)}
  \und\cc(\unito,X) \rto{i^{-1}_X}X,
  \]
  which is equal to
  \[
  \bigl[ \unito \rto{j_\unito} \und\cc(\unito,\unito)
  \rto{i^{-1}_\unito} \unito \rto{f}X \bigr]=f
  \]
  by the naturality of \(i^{-1}_X\), and because
  \(j_\unito=i_\unito:\unito\to\und\cc(\unito,\unito)\) by
  \propref{prop-j1-i1}. The corollary is proven.
\end{proof}

\begin{proposition}\label{prop-gamma-C1L-undC-gamma}
  The following diagram commutes:
  \[
  \begin{xy}
    (-27,9)*+{\cc(Y,Z)}="1";
    (27,9)*+{\cc(\und\cc(X,Y),\und\cc(X,Z))}="2";
    (-27,-9)*+{\cc(\unito,\und\cc(Y,Z))}="3";
    (27,-9)*+{\cc(\unito,\und\cc(\und\cc(X,Y),\und\cc(X,Z)))}="4";
    {\ar@{->}^-{\und\cc(X,-)} "1";"2"};
    {\ar@{->}_-{\gamma} "1";"3"};
    {\ar@{->}^-{\gamma} "2";"4"};
    {\ar@{->}^-{\cc(\unito,L^X_{YZ})} "3";"4"};
  \end{xy}
  \]
\end{proposition}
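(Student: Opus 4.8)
The plan is to evaluate both composite maps in the diagram on an arbitrary morphism $f\colon Y\to Z$ and verify that the two results coincide. Chasing $f$ along the left-then-bottom path, I first apply $\gamma$ to obtain $\gamma(f)=j_Y\cdot\und\cc(1,f)\colon\unito\to\und\cc(Y,Z)$ (using the formula for $\gamma$ from CC5), and then postcompose with $L^X_{YZ}$, which produces
\[
\unito\rto{j_Y}\und\cc(Y,Y)\rto{\und\cc(1,f)}\und\cc(Y,Z)\rto{L^X_{YZ}}\und\cc(\und\cc(X,Y),\und\cc(X,Z)).
\]
Chasing $f$ along the top-then-right path, I first apply the functor $\und\cc(X,-)$ to get $\und\cc(X,f)\colon\und\cc(X,Y)\to\und\cc(X,Z)$, and then apply $\gamma$ for the pair $\und\cc(X,Y),\und\cc(X,Z)$, which produces
\[
\unito\rto{j_{\und\cc(X,Y)}}\und\cc(\und\cc(X,Y),\und\cc(X,Y))\rto{\und\cc(1,\und\cc(X,f))}\und\cc(\und\cc(X,Y),\und\cc(X,Z)).
\]
Thus the proposition reduces to the single morphism identity $j_Y\cdot\und\cc(1,f)\cdot L^X_{YZ}=j_{\und\cc(X,Y)}\cdot\und\cc(1,\und\cc(X,f))$.

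The key step is to recognise the middle portion $\und\cc(1,f)\cdot L^X_{YZ}$ of the left hand side as an instance of the naturality of $L$. Since $L^X_{YZ}$ is natural in its covariant upper index, applying naturality to the morphism $f\colon Y\to Z$ (the index moving from $Y$ to $Z$) yields the equation
\[
\und\cc(1,f)\cdot L^X_{YZ}=L^X_{YY}\cdot\und\cc(1,\und\cc(X,f)).
\]
Substituting this into the left hand side rewrites it as $j_Y\cdot L^X_{YY}\cdot\und\cc(1,\und\cc(X,f))$.

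It then remains to simplify the prefix $j_Y\cdot L^X_{YY}$, and this is precisely the content of axiom CC1, which gives $j_Y\cdot L^X_{YY}=j_{\und\cc(X,Y)}$. This turns the left hand side into $j_{\und\cc(X,Y)}\cdot\und\cc(1,\und\cc(X,f))$, exactly the right hand side, completing the argument. I expect the only real subtlety to be bookkeeping: setting up the naturality square for $L$ in the correct variable, so that the induced map on the target is $\und\cc(1,\und\cc(X,f))$ and the subscripts of $L$ match, rather than accidentally invoking naturality in the contravariant slot. Once that instance is written down correctly, the remainder is a direct application of the definition of $\gamma$ together with CC1, with no genuine obstacle.
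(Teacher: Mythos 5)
Your proof is correct and follows essentially the same route as the paper's: evaluate both composites on \(f\), use the naturality of \(L^X_{YZ}\) in the covariant variable \(Z\) to rewrite \(\und\cc(1,f)\cdot L^X_{YZ}\) as \(L^X_{YY}\cdot\und\cc(1,\und\cc(1,f))\), and finish with axiom CC1. The one slip is terminological: the naturality you invoke is in the second \emph{lower} (covariant) index \(Z\), not the ``upper index'' \(X\), which is only dinatural---but the square you actually write down is the correct one, so the argument stands.
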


\begin{proof}
  For each \(f\in\cc(Y,Z)\),  we have:
  \begin{align*}
    \cc(\unito,L^X_{YZ})(\gamma(f))&=
    \bigl[ \unito \rto{j_Y} \und\cc(Y,Y) \rto{\und\cc(1,f)} \und\cc(Y,Z)
    \rto{L^X_{YZ}} \und\cc(\und\cc(X,Y),\und\cc(X,Z)) \bigr]
    \\
    &=\bigl[
    \unito\rto{j_Y}\und\cc(Y,Y)\rto{L^X_{YY}}\und\cc(\und\cc(X,Y),\und\cc(X,Y))\rto{\und\cc(1,\und\cc(1,f))}
    \und\cc(\und\cc(X,Y),\und\cc(X,Z))
    \bigr]
    \\
    &=\bigl[
    \unito\rto{j_{\und\cc(X,Y)}}\und\cc(\und\cc(X,Y),\und\cc(X,Y))\rto{\und\cc(1,\und\cc(1,f))}\und\cc(\und\cc(X,Y),\und\cc(X,Z))
    \bigr]
    \\
    &=\gamma(\und\cc(1,f)),
  \end{align*}
  where the second equality is by the dinaturality of \(L^X_{YZ}\) in
  \(X\), and the third equality is by the axiom~CC1.
\end{proof}

\begin{proposition}\label{prop-gamma-fg-gamma-f-C1g}
  For each \(f\in\cc(X,Y)\), \(g\in\cc(Y,Z)\), we have \(\gamma(f\cdot
  g) = \gamma(f)\cdot\und\cc(1,g)=\gamma(g)\cdot\und\cc(f,1)\).
\end{proposition}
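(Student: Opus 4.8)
The plan is to unfold the definition of $\gamma$ supplied by axiom CC5 and then reduce both equalities to the functoriality of the internal $\Hom$-functor together with the dinaturality of $j$. Recall that for a morphism $h:A\to B$ the morphism $\gamma(h)$ is the composite $\bigl[\unito\rto{j_A}\und\cc(A,A)\rto{\und\cc(1,h)}\und\cc(A,B)\bigr]$; I expect both identities to emerge simply by rewriting such composites.

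For the first equality I would proceed directly. Writing $\gamma(f\cdot g)=\bigl[\unito\rto{j_X}\und\cc(X,X)\rto{\und\cc(1,f\cdot g)}\und\cc(X,Z)\bigr]$ and using that $\und\cc(X,-):\cc\to\cc$ is a functor, I factor $\und\cc(1,f\cdot g)=\und\cc(1,f)\cdot\und\cc(1,g)$. The two leading arrows $j_X$ and $\und\cc(1,f)$ compose to $\gamma(f)$ by definition, so $\gamma(f\cdot g)=\gamma(f)\cdot\und\cc(1,g)$ follows at once.

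For the second equality the decisive ingredient is the dinaturality of $j$ in $X$, which for $f:X\to Y$ reads $j_X\cdot\und\cc(1,f)=j_Y\cdot\und\cc(f,1)$, that is $\gamma(f)=j_Y\cdot\und\cc(f,1)$. Substituting this into the right-hand factor of the first equality gives $\gamma(f)\cdot\und\cc(1,g)=j_Y\cdot\und\cc(f,1)\cdot\und\cc(1,g)$, and the functoriality of $\und\cc(-,-)$ on $\cc^\op\times\cc$ collapses $\und\cc(f,1)\cdot\und\cc(1,g)=\und\cc(f,g)$. On the other hand $\gamma(g)\cdot\und\cc(f,1)=j_Y\cdot\und\cc(1,g)\cdot\und\cc(f,1)=j_Y\cdot\und\cc(f,g)$ by the same functoriality. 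Since both sides reduce to $j_Y\cdot\und\cc(f,g)$, they agree.

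The one point I would watch carefully is the variance bookkeeping in the dinaturality of $j$: because $\und\cc(-,-)$ is contravariant in the first slot and covariant in the second, the cowedge condition for $j$ relates $\und\cc(1,f)$ acting on the second variable with $\und\cc(f,1)$ acting on the first, both landing in $\und\cc(X,Y)$. Once this identity is oriented correctly the rest is a purely formal manipulation, and I anticipate no genuine obstacle beyond this bookkeeping.
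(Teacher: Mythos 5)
Your proposal is correct and follows essentially the same route as the paper: the first equality by factoring $\und\cc(1,f\cdot g)=\und\cc(1,f)\cdot\und\cc(1,g)$, and the second by the dinaturality of $j$ (in the correctly oriented form $j_X\cdot\und\cc(1,f)=j_Y\cdot\und\cc(f,1)$) combined with the functoriality of $\und\cc(-,-)$. Reducing both sides to $j_Y\cdot\und\cc(f,g)$ is only a cosmetic variant of the paper's direct interchange of $\und\cc(f,1)$ and $\und\cc(1,g)$.
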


\begin{proof}
  Indeed, \(\gamma(f\cdot g) = j_X\cdot\und\cc(1,f\cdot g) =
  j_X\cdot\und\cc(1,f)\cdot\und\cc(1,g) =
  \gamma(f)\cdot\und\cc(1,g)\), proving the first equality. Let us
  prove the second equality. We have:
  \begin{alignat*}{3}
    \gamma(f)\cdot\und\cc(1,g) & = \bigl[ \unito \rto{j_X}
    \und\cc(X,X) \rto{\und\cc(1,f)} \und\cc(X,Y) \rto{\und\cc(1,g)}
    \und\cc(X,Z) \bigr] & &
    \\
    & = \bigl[ \unito \rto{j_Y} \und\cc(Y,Y) \rto{\und\cc(f,1)}
    \und\cc(X,Y) \rto{\und\cc(1,g)} \und\cc(X,Z) \bigr] & \quad &
    \textup{(dinaturality of \(j\))}
    \\
    & = \bigl[ \unito \rto{j_Y} \und\cc(Y,Y) \rto{\und\cc(1,g)}
    \und\cc(Y,Z) \rto{\und\cc(f,1)} \und\cc(X,Z) \bigr] & \quad &
    \textup{(functoriality of \(\und\cc(-,-)\))}
    \\
    & = \gamma(g)\cdot\und\cc(f,1). &&
  \end{alignat*}
  The proposition is proven.
\end{proof}

We now recall the definitions of closed functor and closed natural
transformation following \cite[Section~2]{EK}. 

\begin{definition}
  Let \(\cc\) and \(\cd\) be closed categories. A \emph{closed functor}
  \(\Phi=(\phi,\hat\phi,\phi^0):\cc\to\cd\) consists of the following
  data:
  \begin{itemize}
  \item a functor \(\phi:\cc\to\cd\);
  \item a natural transformation
    \(\hat\phi=\hat\phi_{X,Y}:\phi\und\cc(X,Y)\to\und\cd(\phi X,\phi Y)\);
  \item a morphism \(\phi^0:\unito\to\phi\unito\).
  \end{itemize}
  These data are subject to the following axioms.
  \begin{list}{}{%
      \setlength{\labelwidth}{3.5em}
      \setlength{\leftmargin}{4em}}
    
  \item[CF1.] The following equation holds true:
    \begin{equation*}
      \bigl[
      \unito\rto{\phi^0}\phi\unito\rto{\phi j_X}\phi\und\cc(X,X)\rto{\hat\phi}\und\cd(\phi X,\phi X)
      \bigr]=j_{\phi X}.
    \end{equation*}

  \item[CF2.] The following equation holds true:
    \begin{equation*}
      \bigl[ \phi X\rto{\phi
        i_X}\phi\und\cc(\unito,X)\rto{\hat\phi}\und\cd(\phi\unito,\phi
      X)\rto{\und\cd(\phi^0,1)}\und\cd(\unito,\phi X)
      \bigr]=i_{\phi X}.
    \end{equation*}

  \item[CF3.] The following diagram commutes:
    \begin{equation*}
      \begin{xy}
        (-50,9)*+{\phi\und\cc(Y,Z)}="1";
        (-0,9)*+{\phi\und\cc(\und\cc(X,Y),\und\cc(X,Z))}="2";
        (65,9)*+{\und\cd(\phi\und\cc(X,Y),\phi\und\cc(X,Z))}="3";
        (-50,-9)*+{\und\cd(\phi Y,\phi Z)}="4";
        (-0,-9)*+{\und\cd(\und\cd(\phi X,\phi Y),\und\cd(\phi X,\phi Z))}="5";
        (65,-9)*+{\und\cd(\phi\und\cc(X,Y),\und\cd(\phi X,\phi Z))}="6";
        {\ar@{->}^-{\phi L^X_{YZ}} "1";"2"};
        {\ar@{->}^-{\hat\phi} "2";"3"};
        {\ar@{->}_-{\hat\phi} "1";"4"};
        {\ar@{->}^-{L^{\phi X}_{\phi Y,\phi Z}} "4";"5"};
        {\ar@{->}^-{\und\cd(\hat\phi,1)} "5";"6"};
        {\ar@{->}^-{\und\cd(1,\hat\phi)} "3";"6"};
      \end{xy}
    \end{equation*}
  \end{list}
\end{definition}

\begin{proposition}\label{prop-E-V1?}
  Let \(\cv\) be a closed category. There is a closed functor
  \(E=(e,\hat e,e^0):\cv\to\Set\), where:
  \begin{itemize}
  \item \(e=\cv(\unito,-):\cv\to\Set\);
  \item \(\hat e=\bigl[\cv(\unito,\und\cv(X,Y)) \rto{\gamma^{-1}}
    \cv(X,Y) \rto{\cv(\unito,-)}
    \Set(\cv(\unito,X),\cv(\unito,X))\bigr]\);
  \item \(e^0:\{*\}\to\cv(\unito,\unito)\), \(*\mapsto 1_\unito\).
  \end{itemize}
\end{proposition}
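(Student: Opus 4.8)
The plan is to establish that \(E=(e,\hat e,e^0)\) satisfies everything required of a closed functor, i.e.\ that \(e\) is a functor (immediate, being representable), that \(e^0\) is a morphism of \(\Set\) (immediate), that \(\hat e\) is natural in \(X\) and \(Y\), and that axioms CF1--CF3 hold. Two preliminary observations will be used throughout. First, evaluating the definition of \(\gamma\) in CC5 at \(f=1_X\) gives \(\gamma(1_X)=j_X\cdot\und\cv(1,1_X)=j_X\), so \(\gamma^{-1}(j_X)=1_X\). Second, for \(\xi\in\cv(\unito,\und\cv(A,B))\) the map \(\hat e(\xi)=\cv(\unito,\gamma^{-1}(\xi))\) is nothing but post-composition with \(\gamma^{-1}(\xi)\colon A\to B\); writing \(\xi^\sharp\defeq\gamma^{-1}(\xi)\), this reads \(\hat e(\xi)(p)=p\cdot\xi^\sharp\). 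Naturality of \(\hat e\) in \((X,Y)\) is then a short computation: for \(u\colon X'\to X\) and \(v\colon Y\to Y'\), the two identities of \propref{prop-gamma-fg-gamma-f-C1g} together with functoriality of \(\und\cv(-,-)\) give \(\gamma^{-1}(\xi\cdot\und\cv(u,v))=u\cdot\gamma^{-1}(\xi)\cdot v\), which is exactly what both legs of the naturality square compute to. The real content is therefore in the three axioms, which I would check by chasing elements and exploiting the explicit closed structure on \(\Set\) from \exaref{exa-set-closed-cat}.

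For CF1 I would chase the point \(*\) of the unit \(\{*\}\) of \(\Set\): it is sent by \(e^0\) to \(1_\unito\), by \(e(j_X)=\cv(\unito,j_X)\) to \(1_\unito\cdot j_X=j_X\), and by \(\hat e\) to \(\cv(\unito,\gamma^{-1}(j_X))=\cv(\unito,1_X)=1_{\cv(\unito,X)}\). Since \(j_{eX}\) in \(\Set\) sends \(*\) to \(1_{\cv(\unito,X)}\), this proves CF1. For CF2 I would first deduce from \corref{cor-gamma-C-1i-1} that \(i_X^{-1}\circ\gamma(h)=h\) for every \(h\in\cv(\unito,X)\), hence \(\gamma(h)=h\cdot i_X\) and \(\gamma^{-1}(f\cdot i_X)=f\). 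Then chasing \(f\in\cv(\unito,X)\): it is carried by \(e(i_X)\) to \(f\cdot i_X\), by \(\hat e\) to \(\cv(\unito,f)\), and by \(\und\Set(e^0,1)\) to the function \(\cv(\unito,f)\circ e^0\), which sends \(*\) to \(1_\unito\cdot f=f\); as \(i_{eX}(f)\) is by \exaref{exa-set-closed-cat} the function \(*\mapsto f\), axiom CF2 follows.

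The substantive step is CF3, which I would prove by chasing a general element \(\zeta=\gamma(h)\in\cv(\unito,\und\cv(Y,Z))\), where \(h\defeq\gamma^{-1}(\zeta)\in\cv(Y,Z)\). Along the top edge, \propref{prop-gamma-C1L-undC-gamma} identifies \(e(L^X_{YZ})(\gamma(h))=\cv(\unito,L^X_{YZ})(\gamma(h))\) with \(\gamma(\und\cv(1_X,h))\); applying \(\hat e\) and then \(\und\Set(1,\hat e)\), and using the first identity of \propref{prop-gamma-fg-gamma-f-C1g} to rewrite \(\gamma^{-1}(g\cdot\und\cv(1_X,h))=\gamma^{-1}(g)\cdot h\), I find that the top composite sends \(g\in\cv(\unito,\und\cv(X,Y))\) to the function \(p\mapsto p\cdot\gamma^{-1}(g)\cdot h\) in \(\Set(\cv(\unito,X),\cv(\unito,Z))\). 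Along the bottom edge, \(\hat e(\zeta)=\cv(\unito,h)\); feeding this into the formula \(L^A_{BC}(g)(f)=f\cdot g\) for \(\Set\) and precomposing with \(\hat e\) via \(\und\Set(\hat e,1)\), the bottom composite sends \(g\) to \(\cv(\unito,h)\circ\hat e(g)\), i.e.\ to \(p\mapsto(p\cdot\gamma^{-1}(g))\cdot h\). The two functions coincide, so the square commutes.

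I expect the only real difficulty to be in CF3, and it is bookkeeping rather than conceptual: one must keep the two composition conventions straight and invoke \propref{prop-gamma-C1L-undC-gamma} and \propref{prop-gamma-fg-gamma-f-C1g} at precisely the right spots to recognize both legs as the single function \(p\mapsto p\cdot\gamma^{-1}(g)\cdot h\). Once the translation \(\hat e(\xi)(p)=p\cdot\gamma^{-1}(\xi)\) is in place, the remaining axioms reduce to transcribing the definitions of \(i\), \(j\), and \(L\) in \(\Set\), with no genuinely hard step.
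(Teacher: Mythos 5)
Your proposal is correct and follows essentially the same route as the paper: CF1 and CF2 are verified by the same element chases (using \(\gamma^{-1}(j_X)=1_X\) and \corref{cor-gamma-C-1i-1} respectively), and CF3 rests on exactly the two ingredients the paper uses, \propref{prop-gamma-C1L-undC-gamma} and \propref{prop-gamma-fg-gamma-f-C1g}, to identify both legs with \(g\mapsto(p\mapsto p\cdot\gamma^{-1}(g)\cdot h)\). The only cosmetic difference is that you chase an element around the whole CF3 diagram while the paper first splits off the upper square via \propref{prop-gamma-C1L-undC-gamma}; your explicit naturality check for \(\hat e\) is a harmless addition the paper leaves tacit.
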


\begin{proof}
  Let us check the axioms CF1--CF3. The reader is referred to
  \exaref{exa-set-closed-cat} for a description of the structure of
  a closed category on \(\Set\).

  \paragraph{CF1.} We must prove the following equation:
  \[
  \bigl[
  \{*\}\rto{e^0}\cv(\unito,\unito)\rto{\cv(\unito,j_X)}\cv(\unito,\und\cv(X,X))\rto{\gamma^{-1}}
  \cv(X,X)\rto{\cv(\unito,-)}\Set(\cv(\unito,X),\cv(\unito,X))
  \bigr]=j_{\cv(\unito,X)}.
  \]
  The image of \(*\) under the composite in the left hand side is
  \(\cv(\unito,\gamma^{-1}(j_X))=\cv(\unito,1_X)=1_{\cv(\unito,X)}\),
  which is precisely \(j_{\cv(\unito,X)}(*)\).

  \paragraph{CF2.} We must prove the following equation:
  \begin{align*}
    \bigl[
    \cv(\unito,X)&\rto{\cv(\unito,i_X)}\cv(\unito,\und\cv(\unito,X))
    \\
    &\rto[\hphantom{\cv(\unito,i_X)}]{\gamma^{-1}}\cv(\unito,X)
    \\
    &\rto[\hphantom{\cv(\unito,i_X)}]{\cv(\unito,-)}\Set(\cv(\unito,\unito),\cv(\unito,X))
    \\
    &\rto[\hphantom{\cv(\unito,i_X)}]{\Set(e^0,1)}\Set(\{*\},\cv(\unito,X))
    \bigr]=i_{\cv(\unito,X)}.
  \end{align*}
  By \corref{cor-gamma-C-1i-1} the left hand side is equal to 
  \[
  \bigl[ \cv(\unito,X) \rto{\cv(\unito,-)}
  \Set(\cv(\unito,\unito),\cv(\unito,X)) \rto{\Set(e^0,1)}
  \Set(\{*\},\cv(\unito,X)) \bigr],
  \]
  and so it maps an element \(f\in\cv(\unito,X)\) to the function
  \(\{*\}\to\cv(\unito,X)\), \(*\mapsto f\), which is precisely
  \(i_{\cv(\unito,X)}(f)\).

  \paragraph{CF3.} We must prove that the exterior of the following
  diagram commutes:
  \[
  \begin{xy}
    (-45,36)*+{\cv(\unito,\und\cv(Y,Z))}="1";
    (45,36)*+{\cv(\unito,\und\cv(\und\cv(X,Y),\und\cv(X,Z)))}="2";
    (45,18)*+{\cv(\und\cv(X,Y),\und\cv(X,Z))}="4";
    (-45,18)*+{\cv(Y,Z)}="3";
    (45,0)*+{\Set(\cv(\unito,\und\cv(X,Y)),\cv(\unito,\und\cv(X,Z)))}="6";
    (-45,0)*+{\Set(\cv(\unito,Y),\cv(\unito,Z))}="5";
    (45,-18)*+{\Set(\cv(\unito,\und\cv(X,Y)),\cv(X,Z))}="8";
    (-45,-18)*+{\Set(\Set(\cv(\unito,X),\cv(\unito,Y)),\Set(\cv(\unito,X),\cv(\unito,Z)))}="7";
    (-45,-36)*+{\Set(\cv(X,Y),\Set(\cv(\unito,X),\cv(\unito,Z)))}="9";
    (45,-36)*+{\Set(\cv(\unito,\und\cv(X,Y)),\Set(\cv(\unito,X),\cv(\unito,Z)))}="10";
    {\ar@{->}^-{\cv(\unito,L^X_{YZ})} "1";"2"};
    {\ar@{->}_-{\gamma^{-1}} "1";"3"};
    {\ar@{->}^-{\gamma^{-1}} "2";"4"};
    {\ar@{->}^-{\und\cv(X,-)} "3";"4"};
    {\ar@{->}_-{\cv(\unito,-)} "3";"5"};
    {\ar@{->}^-{\cv(\unito,-)} "4";"6"};
    {\ar@{->}_-{L^{\cv(\unito,X)}_{\cv(\unito,Y),\cv(\unito,Z)}} "5";"7"};
    {\ar@{->}^-{\Set(1,\gamma^{-1})} "6";"8"};
    {\ar@{->}_-{\Set(\cv(\unito,-),1)} "7";"9"};
    {\ar@{->}^-{\Set(1,\cv(\unito,-))} "8";"10"};
    {\ar@{->}^-{\Set(\gamma^{-1},1)} "9";"10"};
  \end{xy}
  \]
  The upper square commutes by
  \propref{prop-gamma-C1L-undC-gamma}. Let us prove that so does the
  remaining region. Taking an element \(f\in\cv(Y,Z)\) and tracing it
  along the top-right path we obtain the function
  \begin{align*}
    \cv(\unito,\und\cv(X,Y))&\to\Set(\cv(\unito,X),\cv(\unito,Z)),
    \\
    g&\mapsto\bigl(h\mapsto
    h\cdot\gamma^{-1}(g\cdot\und\cv(1,f))\bigr),
    \\
    \intertext{whereas pushing \(f\) along the left-bottom path yields the function}
    \cv(\unito,\und\cv(X,Y))&\to\Set(\cv(\unito,X),\cv(\unito,Z)),
    \\
    g&\mapsto\bigl(h\mapsto h\cdot\gamma^{-1}(g)\cdot f\bigr).
  \end{align*}
  These two functions are equal by
  \propref{prop-gamma-fg-gamma-f-C1g}. The proposition is proven.
\end{proof}

\begin{definition}
  Let
  \(\Phi=(\phi,\hat\phi,\phi^0),\Psi=(\psi,\hat\psi,\psi^0):\cc\to\cd\)
  be closed functors. A \emph{closed natural transformation}
  \(\eta:\Phi\to\Psi:\cc\to\cd\) is a natural transformation
  \(\eta:\phi\to \psi:\cc\to\cd\) satisfying the following axioms.
  \begin{list}{}{%
      \setlength{\labelwidth}{3.5em}
      \setlength{\leftmargin}{4em}}
  \item[CN1.] The following equation holds true:
    \begin{equation*}
      \bigl[
      \unito\rto{\phi^0}\phi\unito\rto{\eta_\unito}\psi\unito
      \bigr]=\psi^0.
    \end{equation*}

  \item[CN2.] The following diagram commutes:
    \begin{equation*}
      \begin{xy}
        (-40,9)*+{\phi\und\cc(X,Y)}="1";
        (40,9)*+{\und\cd(\phi X,\phi Y)}="2";
        (40,-9)*+{\und\cd(\phi X,\psi Y)}="3";
        (-40,-9)*+{\psi\und\cc(X,Y)}="4";
        (0,-9)*+{\und\cd(\psi X,\psi Y)}="5";
        {\ar@{->}^-{\hat\phi} "1";"2"};
        {\ar@{->}_-{\eta_{\und\cc(X,Y)}} "1";"4"};
        {\ar@{->}^-{\und\cd(1,\eta_Y)} "2";"3"};
        {\ar@{->}^-{\und\cd(\eta_X,1)} "5";"3"};
        {\ar@{->}^-{\hat\psi} "4";"5"};
      \end{xy}
    \end{equation*}
  \end{list}
\end{definition}

Closed categories, closed functors, and closed natural transformations
form a 2\n-category \cite[Theorem~4.2]{EK}, which we shall denote by
\(\ClCat\). The composite of closed functors
\(\Phi=(\phi,\hat\phi,\phi^0):\cc\to\cd\) and
\(\Psi=(\psi,\hat\psi,\psi^0):\cd\to\ce\) is defined to be
\(\Chi=(\chi,\hat\chi,\chi^0):\cc\to\ce\), where:
\begin{itemize}
\item \(\chi\) is the composite \(\cc\rto\phi\cd\rto\psi\ce\);
\item \(\hat\chi\) is the composite
  \(\psi\phi\und\cc(X,Y)\rto{\psi\hat\phi}\psi\und\cd(\phi X,\phi
  Y)\rto{\hat\psi}\und\ce(\psi\phi X,\psi\phi Y)\);
\item \(\chi^0\) is the composite
  \(\unito\rto{\psi^0}\psi\unito\rto{\psi\phi^0}\psi\phi\unito\).
\end{itemize}
Compositions of closed natural transformations are defined in the
usual way.

We can enrich in closed categories. Below we recall some enriched
category theory for closed categories mainly following
\cite[Section~5]{EK}.

\begin{definition}
  Let \(\cv\) be a closed category. A \emph{\(\cv\)\n-category} \(\ca\) consists
  of the following data:
  \begin{itemize}
  \item a set \(\Ob\ca\) of objects;
  \item for each \(X,Y\in\Ob\ca\), an object \(\ca(X,Y)\) of \(\cv\);
  \item for each \(X\in\Ob\ca\), a morphism \(j_X:\unito\to\ca(X,X)\) in
    \(\cv\);
  \item for each \(X,Y,Z\in\Ob\ca\), a morphism
    \(L^X_{YZ}:\ca(Y,Z)\to\und\cv(\ca(X,Y),\ca(X,Z))\) in \(\cv\).
  \end{itemize}
  These data are to satisfy axioms \cite[VC1--VC3]{EK}.  If \(\ca\)
  and \(\cb\) are \(\cv\)\n-categories, a \emph{\(\cv\)\n-functor}
  \(F:\ca\to\cb\) consists of the following data:
  \begin{itemize}
  \item a function \(\Ob F:\Ob\ca\to\Ob\cb\), \(X\mapsto FX\);
  \item for each \(X,Y\in\Ob\ca\), a morphism
    \(F=F_{XY}:\ca(X,Y)\to\cb(FX,FY)\) in \(\cv\).
  \end{itemize}
  These data are subject to axioms \cite[VF1--VF2]{EK}.  
\end{definition}

\begin{example}\label{ex-V-LX}
  By \cite[Theorem~5.2]{EK} a closed category \(\cv\) gives rise to a
  category \(\und\cv\) if we take the objects of \(\und\cv\) to be
  those of \(\cv\), take \(\und\cv(X,Y)\) to be the internal
  \(\Hom\)\n-object, and take for \(j\) and \(L\) those of the closed
  category \(\cv\). Furthermore, if \(\ca\) is a \(\cv\)\n-category
  and \(X\) is an object of \(\ca\), then we get a \(\cv\)\n-functor
  \(L^X:\ca\to\und\cv\) if we take \(L^XY=\ca(X,Y)\) and
  \((L^X)_{YZ}=L^X_{YZ}\). In particular, for each \(X\in\Ob\cv\),
  there is a \(\cv\)\n-functor \(L^X:\und\cv\to\und\cv\) such that
  \(L^XY=\und\cv(X,Y)\) and \((L^X)_{YZ}=L^X_{YZ}\).
\end{example}

There is also a notion of \(\cv\)\n-natural transformation. We recall
it in a particular case, namely for \(\cv\)\n-functors
\(\ca\to\und\cv\).

\begin{definition}
  Let \(F,G:\ca\to\und\cv\) be \(\cv\)-functors. A
  \emph{\(\cv\)\n-natural transformation} \(\alpha:F\to
  G:\ca\to\und\cv\) is a collection of morphisms \(\alpha_X:FX\to GX\)
  in \(\cv\), for each \(X\in\Ob\ca\), such that the diagram
  \[
  \begin{xy}
    (-20,9)*+{\ca(X,Y)}="1";
    (20,9)*+{\und\cv(FX,FY)}="2";
    (-20,-9)*+{\und\cv(GX,GY)}="3";
    (20,-9)*+{\und\cv(FX,GY)}="4";
    {\ar@{->}^-{F_{XY}} "1";"2"};
    {\ar@{->}_-{G_{XY}} "1";"3"};
    {\ar@{->}^-{\und\cv(1,\alpha_Y)} "2";"4"};
    {\ar@{->}^-{\und\cv(\alpha_X,1)} "3";"4"}
  \end{xy}
  \]
  commutes, for each \(X,Y\in\Ob\ca\).
\end{definition}

\begin{example}\label{exa-Lf}
  By \cite[Proposition~8.4]{EK} if \(f\in\cv(X,Y)\), the morphisms
  \[
  \und\cv(f,1):\und\cv(Y,Z)\to\und\cv(X,Z), \quad Z\in\Ob\cv,
  \]
  are components of a \(\cv\)\n-natural transformation \(L^f:L^Y\to
  L^X:\und\cv\to\und\cv\).
\end{example}

By \cite[Theorem~10.2]{EK} \(\cv\)\n-categories, \(\cv\)\n-functors,
and \(\cv\)\n-natural transformations form a 2-category, which we
shall denote by \(\VCat\).

\begin{proposition}[{\cite[Proposition~6.1]{EK}}]
  If \(\Phi=(\phi,\hat\phi,\phi^0):\cv\to\cw\) is a closed functor and
  \(\ca\) is a \(\cv\)\n-category, the following data define a
  \(\cw\)\n-category \(\Phi_*\ca\):
  \begin{itemize}
  \item \(\Ob\Phi_*\ca=\Ob\ca\);
  \item \((\Phi_*\ca)(X,Y)=\phi\ca(X,Y)\);
  \item \(j_X=\bigl[\unito\rto{\phi^0}\phi\unito\rto{\phi
      j_X}\phi\ca(X,X)\bigr]\);
  \item \(L^X_{YZ}=\bigl[\phi\ca(Y,Z)\rto{\phi
      L^X_{YZ}}\phi\und\cv(\ca(X,Y),\ca(X,Z))\rto{\hat\phi}\und\cw(\phi\ca(X,Y),\phi\ca(X,Z))\bigr]\).
  \end{itemize}
\end{proposition}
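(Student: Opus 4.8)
The plan is to verify the three axioms VC1--VC3 of \cite{EK} for the proposed \(\cw\)\n-category \(\Phi_*\ca\), reducing each of them to the corresponding axiom for the \(\cv\)\n-category \(\ca\) together with the closed functor axioms CF1--CF3 and the naturality of \(\hat\phi\). Throughout, the symbols \(i\), \(j\), \(L\) decorated by objects denote the closed category structure of \(\cv\) or of \(\cw\), as dictated by the source and target of the arrow; I write \(j\), \(L\) (undecorated) for the structure maps of \(\ca\), and \(j^{\Phi_*\ca}\), \(L^{\Phi_*\ca}\) for those of \(\Phi_*\ca\) as given in the statement.

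For VC1 I would expand the composite \(j^{\Phi_*\ca}_Y\cdot L^{\Phi_*\ca,X}_{YY}\), namely \(\unito\rto{\phi^0}\phi\unito\rto{\phi j_Y}\phi\ca(Y,Y)\rto{\phi L^X_{YY}}\phi\und\cv(\ca(X,Y),\ca(X,Y))\rto{\hat\phi}\und\cw(\phi\ca(X,Y),\phi\ca(X,Y))\). Axiom VC1 for \(\ca\) collapses \(\phi j_Y\cdot\phi L^X_{YY}\) to \(\phi\bigl(j_{\ca(X,Y)}\bigr)\), and the resulting composite is precisely \(j_{\phi\ca(X,Y)}\) by CF1 applied to the object \(\ca(X,Y)\) of \(\cv\). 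For VC2 I would likewise expand \(L^{\Phi_*\ca,X}_{XY}\cdot\und\cw(j^{\Phi_*\ca}_X,1)\) and split \(\und\cw(\phi^0\cdot\phi j_X,1)=\und\cw(\phi j_X,1)\cdot\und\cw(\phi^0,1)\) by functoriality of \(\und\cw(-,-)\). The naturality of \(\hat\phi\) in its first variable with respect to \(j_X:\unito\to\ca(X,X)\) turns \(\hat\phi\cdot\und\cw(\phi j_X,1)\) into \(\phi\und\cv(j_X,1)\cdot\hat\phi\); axiom VC2 for \(\ca\) then identifies \(\phi L^X_{XY}\cdot\phi\und\cv(j_X,1)\) with \(\phi\bigl(i_{\ca(X,Y)}\bigr)\), and CF2 applied to \(\ca(X,Y)\) recognizes the remaining composite \(\phi\bigl(i_{\ca(X,Y)}\bigr)\cdot\hat\phi\cdot\und\cw(\phi^0,1)\) as \(i_{\phi\ca(X,Y)}\).

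The verification of VC3 is the main obstacle, and I expect it to absorb the bulk of the work. One must show that the pentagon defining VC3 for \(\Phi_*\ca\)---whose vertices are \(\phi\ca(U,V)\), \(\und\cw(\phi\ca(Y,U),\phi\ca(Y,V))\), \(\und\cw(\phi\ca(X,U),\phi\ca(X,V))\), and the two iterated internal \(\Hom\)\n-objects built from these---commutes. I would substitute \(L^{\Phi_*\ca}=\phi L\cdot\hat\phi\) into each of its edges and then cut the pentagon into three kinds of regions. The \(\phi\)\n-image of the pentagon VC3 for \(\ca\) supplies the outer frame; the edge arising from the closed category composition \(L\) of \(\cw\) is matched with the corresponding \(\phi L\) of \(\cv\) by CF3, applied with \(X,Y,Z\) replaced by \(\ca(X,Y),\ca(X,U),\ca(X,V)\); and the several occurrences of \(\hat\phi\) separating these regions are transported across the maps \(\und\cw(1,-)\) and \(\und\cw(-,1)\) by the naturality of \(\hat\phi\) in each of its two variables. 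Reassembling the regions yields the desired commutativity. The bookkeeping is routine but lengthy; the one delicate point is to insert the instance of CF3 at exactly the position relative to the naturality squares for \(\hat\phi\) that is forced by matching the iterated internal \(\Hom\)\n-objects along the shared boundary, after which the diagram closes up.
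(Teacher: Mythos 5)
Your proposal is correct; the paper offers no proof of this proposition, simply citing \cite[Proposition~6.1]{EK}, and your verification of VC1--VC3 --- reducing each to the \(\phi\)\n-image of the corresponding axiom for \(\ca\), the matching closed-functor axiom (CF1, CF2, CF3 instantiated at \(\ca(X,Y)\), resp.\ at \(\ca(X,Y),\ca(X,U),\ca(X,V)\)), and the naturality of \(\hat\phi\) in each variable --- is exactly the argument of the cited source.
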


\begin{example}
  Let us study the effect of the closed functor \(E\) from
  \propref{prop-E-V1?} on \(\cv\)\n-categories. Let \(\ca\) be a
  \(\cv\)\n-category. Then the ordinary category \(E_*\ca\) has the
  same set of objects as \(\ca\) and its \(\Hom\)\n-sets are
  \((E_*\ca)(X,Y)=\cv(\unito,\ca(X,Y))\). The morphism \(j_X\) for the
  category \(E_*\ca\) is given by the composite
  \[
  \{*\}\rto{e^0}\cv(\unito,\unito)\rto{\cv(\unito,j_X)}\cv(\unito,\ca(X,X)),
  \]
  i.e., \(1_X\in(E_*\ca)(X,X)\) identifies with \(j_X\). The morphism
  \(L^X_{YZ}\) for the category \(E_*\ca\) is given by the composite
  \begin{align*}
    \cv(\unito,\ca(Y,Z))&\rto{\cv(\unito,L^X)}\cv(\unito,\und\cv(\ca(X,Y),\ca(X,Z)))
    \\
    &\rto[\hphantom{\cv(\unito,L^X)}]{\gamma^{-1}}\cv(\ca(X,Y),\ca(X,Z))
    \\
    &\rto[\hphantom{\cv(\unito,L^X)}]{\cv(\unito,-)}
    \Set(\ca(\unito,\ca(X,Y)),\cv(\unito,\ca(X,Z))).
  \end{align*}
  It follows that composition in \(E_*\ca\) is given by
  \[
  \cv(\unito,\ca(X,Y))\times\cv(\unito,\ca(Y,Z))\to\cv(\unito,\ca(X,Z)),\quad(f,g)\mapsto
  f\cdot\gamma^{-1}(g\cdot L^X_{YZ}).
  \]
\end{example}

\begin{proposition}\label{prop-gamma-iso-V-EundV}
  Let \(\cv\) be a closed category. There is an isomorphism of
  categories \(\gamma:\cv\to E_*\und\cv\) that is identical on objects
  and is given by the bijections
  \(\gamma:\cv(X,Y)\to\cv(\unito,\und\cv(X,Y))\) on morphisms.
\end{proposition}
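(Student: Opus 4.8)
The plan is to show that the object-identical, hom\n-wise bijective assignment $\gamma$ is a functor; once functoriality is established, the fact that each component $\gamma:\cv(X,Y)\to\cv(\unito,\und\cv(X,Y))=(E_*\und\cv)(X,Y)$ is a bijection (axiom CC5) and that $\gamma$ is the identity on the common object set immediately upgrades $\gamma$ to an isomorphism of categories, with inverse given hom\n-wise by $\gamma^{-1}$.

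First I would invoke the explicit description of $E_*\und\cv$ from the example preceding the proposition: its hom\n-sets are $\cv(\unito,\und\cv(X,Y))$, its identity at an object $X$ is $j_X$, and its composition sends a pair $(p,q)\in\cv(\unito,\und\cv(X,Y))\times\cv(\unito,\und\cv(Y,Z))$ to $p\cdot\gamma^{-1}(q\cdot L^X_{YZ})$. Preservation of identities is then immediate: $\gamma(1_X)=j_X\cdot\und\cv(1,1_X)=j_X$, which is precisely the identity of $X$ in $E_*\und\cv$.

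The substance of the proof is preservation of composition. Given $f\in\cv(X,Y)$ and $g\in\cv(Y,Z)$, the composite of $\gamma(f)$ and $\gamma(g)$ in $E_*\und\cv$ is $\gamma(f)\cdot\gamma^{-1}(\gamma(g)\cdot L^X_{YZ})$. Applying \propref{prop-gamma-C1L-undC-gamma} to the morphism $g$ rewrites $\gamma(g)\cdot L^X_{YZ}=\gamma(\und\cv(1,g))$, so that $\gamma^{-1}(\gamma(g)\cdot L^X_{YZ})=\und\cv(1,g)$. Substituting, the composite becomes $\gamma(f)\cdot\und\cv(1,g)$, which by the first equality of \propref{prop-gamma-fg-gamma-f-C1g} is exactly $\gamma(f\cdot g)$. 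Hence $\gamma$ respects composition and is a functor.

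I expect the composition identity to be the only nontrivial step, and hence the main (though modest) obstacle: it amounts to assembling \propref{prop-gamma-C1L-undC-gamma}, used to convert $\gamma(g)\cdot L^X_{YZ}$ into $\gamma(\und\cv(1,g))$, together with \propref{prop-gamma-fg-gamma-f-C1g}, used to recognize $\gamma(f)\cdot\und\cv(1,g)$ as $\gamma(f\cdot g)$. With these two facts in hand there is nothing further to verify, and the isomorphism claim follows from bijectivity on objects and on hom\n-sets.
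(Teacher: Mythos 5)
Your proposal is correct and follows essentially the same route as the paper's proof: identities are preserved because $\gamma(1_X)=j_X$, and composition is handled by combining \propref{prop-gamma-C1L-undC-gamma} (to rewrite $\gamma(g)\cdot L^X_{YZ}$ as $\gamma(\und\cv(1,g))$) with \propref{prop-gamma-fg-gamma-f-C1g}. The concluding appeal to CC5 for bijectivity on hom-sets is exactly what the paper leaves implicit.
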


\begin{proof}
  For each \(X\in\Ob\cv\), we have \(\gamma(1_X)=j_X\), so \(\gamma\)
  preserves identities. Let us check that it also preserves
  composition. For each \(f\in\cv(X,Y)\), \(g\in\cv(Y,Z)\), we have
  \(\gamma(f)\cdot\gamma(g)=\gamma(f)\cdot\gamma^{-1}(\gamma(g)\cdot
  L^X_{YZ})\). By \propref{prop-gamma-C1L-undC-gamma},
  \(\gamma(g)\cdot L^X_{YZ}=\gamma(\und\cv(1,g))\), therefore
  \(\gamma(f)\cdot\gamma(g)=\gamma(f)\cdot\und\cv(1,g)=\gamma(f\cdot
  g)\) by \propref{prop-gamma-fg-gamma-f-C1g}. The proposition is
  proven.
\end{proof}

\begin{theorem}\label{thm-cl-cat-iso-EK}
  Every closed category is isomorphic to a closed category in the
  sense of Eilenberg and Kelly.
\end{theorem}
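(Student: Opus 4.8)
The plan is to realise the required Eilenberg--Kelly closed category as the category $E_*\und\cv$ of \propref{prop-gamma-iso-V-EundV}, equipped with a suitable underlying-set functor, and to use the isomorphism $\gamma\colon\cv\to E_*\und\cv$ as the isomorphism of closed categories. Write $\cv'=E_*\und\cv$. Since $\gamma$ is an isomorphism of the underlying categories that is the identity on objects, I transport the whole closed structure of $\cv$ across $\gamma$: I set $\unito'=\unito$, let $\und{\cv'}(-,-)$ agree with $\und\cv(-,-)$ on objects and be the conjugate of $\und\cv(-,-)$ by $\gamma$ on morphisms, and take $i'$, $j'$, $L'$ to be the images under $\gamma$ of $i$, $j$, $L$. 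Because $\gamma$ is invertible, axioms CC1--CC5 for $\cv'$ follow at once from those for $\cv$, and $\gamma$ becomes a strict isomorphism $\cv\to\cv'$ in $\ClCat$, with $\hat\gamma=1$ and $\gamma^0=1_\unito$. It then remains to supply on $\cv'$ the extra datum described in \remref{rem-cl-cat-EK}: a functor $C\colon\cv'\to\Set$ satisfying CC0 and CC5'.

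For $C$ I take the composite $C=\bigl[\cv'\rto{\gamma^{-1}}\cv\rto{e}\Set\bigr]$, where $e=\cv(\unito,-)$ is the underlying functor of the closed functor $E$ of \propref{prop-E-V1?}; as a composite of a closed functor with a closed isomorphism, $C$ is itself closed. On objects $C X=\cv(\unito,X)$, so $C\und{\cv'}(X,Y)=\cv(\unito,\und\cv(X,Y))=\cv'(X,Y)$, which is the object part of CC0. For the morphism part I must check that the functors $C\circ\und{\cv'}(-,-)$ and $\cv'(-,-)$ from $(\cv')^\op\times\cv'$ to $\Set$ coincide; the key input here is \propref{prop-gamma-fg-gamma-f-C1g}, which says precisely that $\gamma$ is a natural isomorphism $\cv(-,-)\cong\cv(\unito,\und\cv(-,-))$. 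Transporting this naturality along $\gamma$ yields the required equality of functors, so CC0 holds.

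Finally I verify CC5', that is, that $C i'_{\und{\cv'}(X,X)}$ sends $1_X$ to $j'_X$. Under the identifications above, $1_X\in\cv'(X,X)$ is $\gamma(1_X)=j_X$, the morphism $j'_X$ equals $\gamma(j_X)=j_\unito\cdot\und\cv(1,j_X)$, and $C i'_{\und{\cv'}(X,X)}$ unwinds to postcomposition with $i_{\und\cv(X,X)}$ in $\cv$. Thus CC5' reduces to the identity $j_X\cdot i_{\und\cv(X,X)}=j_\unito\cdot\und\cv(1,j_X)$, which follows from the naturality of $i$ (giving $j_X\cdot i_{\und\cv(X,X)}=i_\unito\cdot\und\cv(1,j_X)$) together with $i_\unito=j_\unito$ from \propref{prop-j1-i1}. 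With CC0 and CC5' established, $\cv'$ is a closed category in the sense of Eilenberg and Kelly and $\gamma\colon\cv\to\cv'$ is the desired isomorphism. The hard part will be the morphism part of CC0: one must check that conjugating $\und\cv(-,-)$ by $\gamma$ interacts correctly with the naturality of $\gamma$, and this is exactly where \propref{prop-gamma-fg-gamma-f-C1g} carries the argument.
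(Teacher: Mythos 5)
Your proposal is correct and follows essentially the same route as the paper: realise the Eilenberg--Kelly category as $E_*\und\cv$, transport the closed structure along $\gamma$, take the underlying-set functor $E\circ\gamma^{-1}$, verify CC0 via \propref{prop-gamma-fg-gamma-f-C1g}, and verify CC5' via the naturality of $i$ together with $j_\unito=i_\unito$ from \propref{prop-j1-i1}.
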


More precisely, for every closed category \(\cv\) in the sense of
\defref{def:closed-category} there is a closed category \(\cw\) in the
sense of Eilenberg and Kelly such that \(\cw\), when viewed as a
closed category in the sense of \defref{def:closed-category}, is
isomorphic as a closed category to \(\cv\).

\begin{proof}
  Let \(\cv\) be a closed category. Take \(\cw=E_*\und\cv\). The
  isomorphism \(\gamma\) from \propref{prop-gamma-iso-V-EundV} allows
  us to translate the structure of a closed category from \(\cv\) to
  \(\cw\). Thus the unit object of \(\cw\) is that of \(\cv\), the
  internal \(\Hom\)\n-functor is given by the composite
  \[
  \und\cw(-,-)=\bigl[
  \cw^\op\times\cw\rto{(\gamma^\op\times\gamma)^{-1}}\cv^\op\times\cv\rto{\und\cv(-,-)}\cv\rto{\gamma}\cw
  \bigr].
  \]
  In particular, \(\und\cw(X,Y)=\und\cv(X,Y)\) for each pair of
  objects \(X\) and \(Y\). The transformations \(i_X\), \(j_X\),
  \(L^X_{YZ}\) for \(\cw\) are just \(\gamma(i_X)\), \(\gamma(j_X)\),
  \(\gamma(L^X_{YZ})\) respectively. The category \(\cw\) admits a
  functor \(W:\cw\to\Set\) such that the diagram
  \[
  \begin{xy}
    (-17,9)*+{\cw^\op\times\cw}="1"; (17,9)*+{\cw}="2";
    (17,-9)*+{\Set}="3"; {\ar@{->}^-{\und\cw(-,-)} "1";"2"};
    {\ar@{->}_-{\cw(-,-)} "1";"3"}; {\ar@{->}^-{W} "2";"3"};
  \end{xy}
  \]
  commutes, namely \(W=\bigl[ \cw\rto{\gamma^{-1}}\cv\rto{E}\Set
  \bigr]\).  The commutativity on objects is obvious. Let us check
  that it also holds on morphisms. Let \(f\in\cw(X,Y)\),
  \(h\in\cw(U,V)\); i.e., suppose that \(f:\unito\to\und\cv(X,Y)\) and
  \(h:\unito\to\und\cv(U,V)\) are morphisms in \(\cv\). Then the map
  \(\cw(f,g):\cw(Y,U)\to\cw(X,V)\) is given by \(g \mapsto f \cdot g
  \cdot h\), where the composition is taken in \(\cw\). We must show
  that it is equal to the map
  \[
  \cv(\unito,\und\cv(\gamma^{-1}(f),\gamma^{-1}(h))) :
  \cv(\unito,\und\cv(Y,U))\to\cv(\unito,\und\cv(X,V)), \quad g \mapsto
  g \cdot \und\cv(\gamma^{-1}(f),\gamma^{-1}(h)).
  \]
  We have:
  \begin{alignat*}{3}
    g \cdot \und\cv(\gamma^{-1}(f),\gamma^{-1}(h)) &=
    \gamma(\gamma^{-1}(g)) \cdot \und\cv(\gamma^{-1}(f),1) \cdot
    \und\cv(1,\gamma^{-1}(h)) & \quad &
    \textup{(functoriality of \(\und\cv(-,-)\))}
    \\
    &= \gamma(\gamma^{-1}(f)\cdot\gamma^{-1}(g)) \cdot
    \und\cv(1,\gamma^{-1}(h)) & \quad &
    \textup{(\propref{prop-gamma-fg-gamma-f-C1g})}
    \\
    &= \gamma(\gamma^{-1}(f)\cdot\gamma^{-1}(g)\cdot\gamma^{-1}(h))
    & \quad & \textup{(\propref{prop-gamma-fg-gamma-f-C1g})}
    \\
    &= f\cdot g\cdot h, & \quad & \textup{(\propref{prop-gamma-iso-V-EundV})}
  \end{alignat*}
  hence the assertion. The functor \(W\) also satisfies the axiom
  CC5'.  Indeed, we need to show that
  \[
  Wi_{\und\cw(X,X)}=\cv(\unito,i_{\und\cv(X,X)}):
  \cv(\unito,\und\cv(X,X))\to\cv(\unito,\und\cv(\unito,\und\cv(X,X)))
  \]
  maps \(j_X\in\cv(\unito,\und\cv(X,X))\) to
  \(\gamma(j_X)\in\cv(\unito,\und\cv(\unito,\und\cv(X,X)))\). In other
  words, we need to show that the diagram
  \[
  \begin{xy}
    (-17,9)*+{\unito}="1"; (17,9)*+{\und\cv(X,X)}="2";
    (-17,-9)*+{\und\cv(\unito,\unito)}="3";
    (17,-9)*+{\und\cv(\unito,\und\cv(X,X))}="4"; {\ar@{->}^-{j_X}
      "1";"2"}; {\ar@{->}_-{j_\unito} "1";"3"};
    {\ar@{->}^-{i_{\und\cv(X,X)}} "2";"4"};
    {\ar@{->}^-{\und\cv(1,j_X)} "3";"4"};
  \end{xy}
  \]
  commutes. However
  \(j_\unito=i_\unito:\unito\to\und\cv(\unito,\unito)\) by
  \propref{prop-j1-i1}, so the above diagram is commutative by the
  naturality of \(i\). The theorem is proven.
\end{proof}

Finally, let us recall from \cite{EK} the representation theorem for
\(\cv\)\n-functors \(\ca\to\und\cv\).

\begin{proposition}[{\cite[Corollary~8.7]{EK}}]\label{prop-repr}
  Suppose that \(\cv\) is a closed category in the sense of Eilenberg
  and Kelly; i.e., it is equipped with a functor \(V:\cv\to\Set\)
  satisfying the axioms CC0 and CC5'. Let \(T:\ca\to\und\cv\) be a
  \(\cv\)\n-functor, and let \(W\) be an object of \(\ca\). Then the
  map\footnote{It is denoted by \(\Gamma'\) in
    \cite[Corollary~8.7]{EK}.}
  \[
  \Gamma:\VCat(\ca,\und\cv)(L^W,T)\to VTW, \quad p\mapsto (Vp_W)1_W,
  \]
  is a bijection.
\end{proposition}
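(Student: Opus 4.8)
The plan is to prove this as the enriched Yoneda lemma, by exhibiting an explicit two\n-sided inverse to $\Gamma$. Throughout I would use the axiom CC0 together with the natural isomorphism $i$ to identify, for each object $A$ of $\cv$, the set $VA$ with $\cv(\unito,A)$ via the bijection $Vi_A\colon VA\to V\und\cv(\unito,A)=\cv(\unito,A)$; for $a\in VA$ write $\bar a\colon\unito\to A$ for the corresponding morphism. Under this identification the distinguished element $1_W\in VL^WW=V\ca(W,W)$ corresponds to $j_W$, and more generally, for any morphism $q\colon\ca(W,W)\to B$ in $\cv$ the naturality of $i$ gives $\overline{(Vq)(1_W)}=j_W\cdot q$; in particular $\overline{\Gamma(p)}=j_W\cdot p_W$. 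The inverse I propose is the map $\Theta\colon VTW\to\VCat(\ca,\und\cv)(L^W,T)$ sending $a$ to the family $p^a$ with components
\[
p^a_Y=\bigl[\,\ca(W,Y)\rto{T_{WY}}\und\cv(TW,TY)\rto{\und\cv(\bar a,1)}\und\cv(\unito,TY)\rto{i^{-1}_{TY}}TY\,\bigr],\qquad Y\in\Ob\ca.
\]

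First I would settle the determination identity $\Theta\circ\Gamma=\id$, which already yields injectivity of $\Gamma$. Given a $\cv$\n-natural $p\colon L^W\to T$, the $\cv$\n-naturality square at the pair $(W,Y)$ reads $L^W_{WY}\cdot\und\cv(1,p_Y)=T_{WY}\cdot\und\cv(p_W,1)$. Postcomposing both sides with $\und\cv(j_W,1)$ and invoking the unit axiom among VC1--VC3 for the $\cv$\n-category $\ca$ in the form $L^W_{WY}\cdot\und\cv(j_W,1)=i_{\ca(W,Y)}$, together with naturality of $i$, turns the left side into $p_Y\cdot i_{TY}$; the right side becomes $T_{WY}\cdot\und\cv(\bar a,1)$ with $\bar a=\overline{\Gamma(p)}$. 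Cancelling the isomorphism $i_{TY}$ gives $p_Y=p^a_Y$, so $p=\Theta(\Gamma(p))$.

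Next I would check $\Gamma\circ\Theta=\id$, i.e. $j_W\cdot p^a_W=\bar a$. Here $j_W\cdot T_{WW}=j_{TW}$ by the unit axiom among VF1--VF2 for the $\cv$\n-functor $T$; then, writing $j_{TW}=\gamma(1_{TW})$ and applying \propref{prop-gamma-fg-gamma-f-C1g}, one rewrites $j_{TW}\cdot\und\cv(\bar a,1)=\gamma(\bar a)$, and finally $\gamma(\bar a)=\bar a\cdot i_{TW}$ by \corref{cor-gamma-C-1i-1}. Composing with $i^{-1}_{TW}$ returns $\bar a$, as required. This leaves only the well\n-definedness of $\Theta$, namely that each $p^a$ is genuinely $\cv$\n-natural, for surjectivity.

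The $\cv$\n-naturality of $p^a$ is the substantial point, and I would handle it not by a direct chase but by exhibiting $p^a$ as a vertical composite of three $\cv$\n-natural transformations. The family $T_{W,-}\colon\ca(W,-)\to\und\cv(TW,T-)$ is $\cv$\n-natural as a transformation $L^W\to L^{TW}\cdot T$ (the composite $\cv$\n-functor of \exaref{ex-V-LX}); this is precisely the associativity axiom among VF1--VF2 for $T$ with its first variable set to $W$. The family $\und\cv(\bar a,1)$ is $\cv$\n-natural $L^{TW}\to L^{\unito}$ by \exaref{exa-Lf} (whiskered by $T$). Finally $i^{-1}\colon L^{\unito}\to\Id_{\und\cv}$ is $\cv$\n-natural: its defining square $L^{\unito}_{YZ}\cdot\und\cv(1,i^{-1}_Z)=\und\cv(i^{-1}_Y,1)$ follows from CC4 upon postcomposing with $\und\cv(1,i^{-1}_Z)$ and inverting the isomorphism $\und\cv(i_Y,1)$. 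Since vertical composites of $\cv$\n-natural transformations are again $\cv$\n-natural in $\VCat$, $p^a$ is $\cv$\n-natural, $\Theta$ is well defined, and the two identities above make it inverse to $\Gamma$. I expect this last paragraph — extracting the $\cv$\n-naturality of $i^{-1}$ from CC4 and recognizing VF2 as the naturality of $T_{W,-}$ — to be the main obstacle, since it is where the closed\n-category coherence and the $\cv$\n-functoriality of $T$ are genuinely consumed, the remaining steps being short manipulations with $\gamma$, $i$, and the unit laws.
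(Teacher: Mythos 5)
Your argument is correct. Note that the paper itself gives no proof of this proposition — it is imported verbatim from \cite[Corollary~8.7]{EK} — and what you have written is essentially the Eilenberg--Kelly argument that the citation stands in for: the explicit inverse $a\mapsto T_{W,-}\cdot\und\cv(\bar a,1)\cdot i^{-1}_{T-}$, with $\cv$\nobreakdash-naturality obtained by composing the three $\cv$\nobreakdash-natural pieces (VF2 for $T$, $L^{\bar a}$ whiskered by $T$, and $i^{-1}$, the last being \cite[Proposition~8.5]{EK}, which you correctly rederive from CC4). All the individual steps check out against the paper's conventions (CC2/VC2, CC5', Proposition~\ref{prop-gamma-fg-gamma-f-C1g}, Corollary~\ref{cor-gamma-C-1i-1}), so this is a faithful, self-contained reconstruction of the omitted proof.
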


\begin{example}\label{ex-Lf-repr}
  For each \(f\in VL^XY=V\und\cv(X,Y)=\cv(X,Y)\), the
  \(\cv\)\n-natural transformation \(L^f:L^Y\to
  L^X:\und\cv\to\und\cv\) from \exaref{exa-Lf} is uniquely determined
  by the condition \((V(L^f)_Y)1_Y=f\).
\end{example}

\section{Closed multicategories}
\label{sec:clos-mult}

We begin by briefly recalling the notions of multicategory,
multifunctor, and multinatural transformation. The reader is referred
to the excellent book by Leinster \cite{Leinster} or to
\cite[Chapter~3]{BLM} for a more elaborate introduction to
multicategories.

\begin{definition}
  A \emph{multigraph} \(\mcC\) is a set \(\Ob\mcC\), whose elements
  are called \emph{objects} of \(\mcC\), together with a
  set \(\mcC(X_1,\dots,X_n;Y)\) for each \(n\in\NN\) and
  \(X_1,\dots,X_n,Y\in\Ob\mcC\). Elements of \(\mcC(X_1,\dots,X_n;Y)\)
  are called \emph{morphisms} and written as \(X_1,\dots,X_n\to
  Y\). If \(n=0\), elements of \(\mcC(;Y)\) are written as \(()\to
  Y\). A morphism of multigraphs \(F:\mcC\to\mcD\) consists of a
  function \(\Ob F:\Ob\mcC\to\Ob\mcD\), \(X\mapsto FX\), and functions
  \[
  F=F_{X_1,\dots,X_n;Y}:\mcC(X_1,\dots,X_n;Y)\to\mcD(FX_1,\dots,FX_n;FY),
  \quad f\mapsto Ff,
  \]
  for each \(n\in\NN\) and \(X_1,\dots,X_n,Y\in\Ob\mcC\).
\end{definition}

\begin{definition}
  A \emph{multicategory} \(\mcC\) consists of the following data:
  \begin{itemize}

  \item a multigraph \(\mcC\);

  \item for each \(n,k_1,\dots,k_n\in\NN\) and
    \(X_{ij},Y_i,Z\in\Ob\mcC\), \(1\le i\le n\), \(1\le j\le k_i\), a
    function
    \[
    \prod_{i=1}^n\mcC(X_{i1},\dots,X_{ik_i};Y_i)\times\mcC(Y_1,\dots,Y_n;Z)\to
    \mcC(X_{11},\dots,X_{1k_1},\dots,X_{n1},\dots,X_{nk_n};Z),
    \]
    called \emph{composition} and written \((f_1,\dots,f_n,g)\mapsto
    (f_1,\dots,f_n)\cdot g\);

  \item for each \(X\in\Ob\mcC\), an element \(1^\mcC_X\in\mcC(X;X)\),
    called the \emph{identity} of \(X\).
  \end{itemize}
  These data are subject to the obvious associativity and identity
  axioms.
\end{definition}

\begin{example}\label{ex:monoidal-cat-multicat}
  A strict monoidal category \(\cc\) gives rise to a
  multicategory \(\wh\cc\) as follows:
  \begin{itemize}
  \item \(\Ob\wh\cc=\Ob\cc\);
  \item for each \(n\in\NN\) and \(X_1\), \dots, \(X_n\),
    \(Y\in\Ob\cc\), \(\wh\cc(X_1,\dots,X_n;Y)=\cc(X_1\tens\dots\tens
    X_n,Y)\); in particular \(\wh\cc(;Y)=\cc(\unito,Y)\), where
    \(\unito\) is the unit object of \(\cc\);
  \item for each \(n\), \(k_1\), \dots, \(k_n\in\NN\) and \(X_{ij}\),
    \(Y_i\), \(Z\in\Ob\cc\), \(1\le i\le n\), \(1\le j\le k_i\), the
    composition map
    \begin{multline*}
      \prod_{i=1}^n\cc(X_{i1}\tens\dots\tens X_{ik_i},Y_i)\times
      \cc(Y_1\tens\dots\tens Y_n,Z)\to \cc(X_{11}\tens\dots\tens
      X_{1k_1}\tens\dots\tens X_{n1}\tens\dots\tens X_{nk_n},Z)
    \end{multline*}
    is given by \((f_1,\dots,f_n,g)\mapsto(f_1\tens\dots\tens f_n)\cdot
    g\);

  \item for each \(X\in\Ob\cc\),
    \(1_X^{\wh\cc}=1_X^\cc\in\wh\cc(X;X)=\cc(X,X)\).
  \end{itemize}
\end{example}

\begin{definition}
  Let \(\mcC\) and \(\mcD\) be multicategories. A \emph{multifunctor}
  \(F:\mcC\to\mcD\) is a morphism of the underlying multigraphs that
  preserves composition and identities.
\end{definition}

\begin{definition}
  Suppose that \(F,G:\mcC\to\mcD\) are multifunctors. A
  \emph{multinatural transformation} \(r:F\to G:\mcC\to\mcD\) is a
  family of morphisms \(r_X\in\mcD(FX;GX)\), \(X\in\Ob\mcC\), such
  that
  \[
  Ff\cdot r_Y=(r_{X_1},\dots,r_{X_n})\cdot Gf,
  \]
  for each \(f\in\mcC(X_1,\dots,X_n;Y)\).
\end{definition}

Multicategories, multifunctors, and multinatural transformations form
a 2\n-category, which we shall denote by \(\Multicat\).

\begin{definition}[{\cite[Definition~4.7]{BLM}}]
  A multicategory \(\mcC\) is called \emph{closed} if for each
  \(m\in\NN\) and \(X_1,\dots,X_m,Z\in\Ob\mcC\) there exist an object
  \(\und\mcC(X_1,\dots,X_m;Z)\), called \emph{internal
    \(\Hom\)\n-object}, and an \emph{evaluation} morphism
  \[
  \ev^\mcC=\ev^\mcC_{X_1,\dots,X_m;Z}:X_1,\dots,X_m,\und\mcC(X_1,\dots,X_m;Z)\to
  Z
  \]
  such that, for each \(Y_1,\dots,Y_n\in\Ob\mcC\), the function
  \[
  \varphi^\mcC=\varphi^\mcC_{X_1,\dots,X_m;Y_1,\dots,Y_n;Z}:
  \mcC(Y_1,\dots,Y_n;\und\mcC(X_1,\dots,X_m;Z))\to\mcC(X_1,\dots,X_m,Y_1,\dots,Y_n;Z)
  \]
  that sends a morphism
  \(f:Y_1,\dots,Y_n\to\und\mcC(X_1,\dots,X_m;Z)\) to the composite
  \[
  X_1,\dots,X_m,Y_1,\dots,Y_n\rto{1^\mcC_{X_1},\dots,1^\mcC_{X_m},f}X_1,\dots,X_m,\und\mcC(X_1,\dots,X_m;Z)
  \rto{\ev^\mcC_{X_1,\dots,X_m;Z}}Z
  \]
  is bijective. Let \(\ClMulticat\) denote the full 2\n-subcategory of
  \(\Multicat\) whose objects are closed multicategories.
\end{definition}

\begin{remark}
  Notice that for \(m=0\) an object \(\und\mcC(;Z)\) and a morphism
  \(\ev^\mcC_{;Z}\) with the required property always exist. Namely,
  we may (and we shall) always take \(\und\mcC(;Z)=Z\) and
  \(\ev^\mcC_{;Z}=1^\mcC_Z:Z\to Z\). With these choices
  \(\varphi^\mcC_{;Y_1,\dots,Y_n;Z}:\mcC(Y_1,\dots,Y_n;Z)
  \to\mcC(Y_1,\dots,Y_n;Z)\) is the identity map.
\end{remark}

\begin{example}
  Let \(\cc\) be a strict monoidal category, and let \(\wh\cc\) be the
  associated multicategory, see \exaref{ex:monoidal-cat-multicat}.  It
  is easy to see that the multicategory \(\wh\cc\) is closed if and
  only if \(\cc\) is closed as a monoidal category.
\end{example}

\begin{proposition}\label{prop-undCXZ-implies-closedness}
  Suppose that for each pair of objects \(X,Z\in\Ob\mcC\) there exist
  an object \(\und\mcC(X;Z)\) and a morphism
  \(\ev^\mcC_{X;Z}:X,\und\mcC(X;Z)\to Z\) of \(\mcC\) such that the
  function \(\varphi^\mcC_{X;Y_1,\dots,Y_n;Z}\) is a bijection, for
  each finite sequence \(Y_1,\dots,Y_n\) of objects of \(\mcC\). Then
  \(\mcC\) is a closed multicategory.
\end{proposition}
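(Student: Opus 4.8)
The plan is to argue by induction on \(m\), reducing the existence of internal \(\Hom\)-objects for sequences of length \(m\) to the hypothesis (the case \(m=1\)) together with the inductive hypothesis (the case \(m-1\)). The base cases are immediate: for \(m=0\) we take \(\und\mcC(;Z)=Z\) and \(\ev^\mcC_{;Z}=1^\mcC_Z\) as in the remark following the definition of closed multicategory, so that \(\varphi^\mcC_{;Y_1,\dots,Y_n;Z}\) is the identity; and for \(m=1\) the required data and bijectivity are exactly what the hypothesis provides.

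For the inductive step, assume that internal \(\Hom\)-objects and evaluations with the universal property have been constructed for all sequences of length \(m-1\). Given \(X_1,\dots,X_m,Z\in\Ob\mcC\), write \(W\defeq\und\mcC(X_1,\dots,X_{m-1};Z)\) and define
\[
\und\mcC(X_1,\dots,X_m;Z)\defeq\und\mcC(X_m;W),
\]
which exists by the case \(m=1\). The crucial design decision—and the one place where the lack of a symmetry on \(\mcC\) must be respected—is that one must curry off the \emph{last} argument \(X_m\) rather than the first; currying off \(X_1\) would leave the remaining arguments in the wrong order on the two sides of the bijection. As evaluation we take the composite
\[
\ev^\mcC_{X_1,\dots,X_m;Z}\defeq(1^\mcC_{X_1},\dots,1^\mcC_{X_{m-1}},\ev^\mcC_{X_m;W})\cdot\ev^\mcC_{X_1,\dots,X_{m-1};Z},
\]
a morphism \(X_1,\dots,X_m,\und\mcC(X_1,\dots,X_m;Z)\to Z\), where \(\ev^\mcC_{X_m;W}:X_m,\und\mcC(X_m;W)\to W\) and \(\ev^\mcC_{X_1,\dots,X_{m-1};Z}:X_1,\dots,X_{m-1},W\to Z\).

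It remains to show that \(\varphi^\mcC_{X_1,\dots,X_m;Y_1,\dots,Y_n;Z}\) is a bijection. I claim it factors as
\[
\varphi^\mcC_{X_1,\dots,X_m;Y_1,\dots,Y_n;Z}=\varphi^\mcC_{X_1,\dots,X_{m-1};X_m,Y_1,\dots,Y_n;Z}\circ\varphi^\mcC_{X_m;Y_1,\dots,Y_n;W}.
\]
Indeed, for \(f:Y_1,\dots,Y_n\to\und\mcC(X_m;W)\), substituting the definition of \(\ev^\mcC_{X_1,\dots,X_m;Z}\) into \(\varphi^\mcC_{X_1,\dots,X_m;Y_1,\dots,Y_n;Z}(f)=(1^\mcC_{X_1},\dots,1^\mcC_{X_m},f)\cdot\ev^\mcC_{X_1,\dots,X_m;Z}\) and reassociating by the associativity axiom of \(\mcC\) gives
\[
(1^\mcC_{X_1},\dots,1^\mcC_{X_{m-1}},(1^\mcC_{X_m},f)\cdot\ev^\mcC_{X_m;W})\cdot\ev^\mcC_{X_1,\dots,X_{m-1};Z},
\]
in which the inner composite \((1^\mcC_{X_m},f)\cdot\ev^\mcC_{X_m;W}\) is precisely \(\varphi^\mcC_{X_m;Y_1,\dots,Y_n;W}(f)\), and the outer composite is then the application of \(\varphi^\mcC_{X_1,\dots,X_{m-1};X_m,Y_1,\dots,Y_n;Z}\) to it. The first factor is a bijection by the case \(m=1\) and the second by the inductive hypothesis (applied with the sequence \(X_m,Y_1,\dots,Y_n\) in place of the domain), so the composite is a bijection, completing the induction.

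The only genuine computation is the reassociation in the last paragraph: one must apply the associativity axiom to the nested composite and correctly pair each outer morphism \(1^\mcC_{X_i}\) (for \(i<m\)) or the pair \(1^\mcC_{X_m},f\) (in the last slot) with the corresponding inputs of the middle-layer morphisms \(1^\mcC_{X_i}\) and \(\ev^\mcC_{X_m;W}\), using \(1^\mcC_{X_i}\cdot 1^\mcC_{X_i}=1^\mcC_{X_i}\). This is routine bookkeeping once the arguments are tracked carefully; the substantive point is the choice to curry the last variable, which is exactly what makes the orders of the arguments agree on both sides.
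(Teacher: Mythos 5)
Your proof is correct and follows essentially the same route as the paper: the same definition \(\und\mcC(X_1,\dots,X_m;Z)=\und\mcC(X_m;\und\mcC(X_1,\dots,X_{m-1};Z))\), the same composite evaluation, and the same factorization of \(\varphi^\mcC_{X_1,\dots,X_m;Y_1,\dots,Y_n;Z}\) as \(\varphi^\mcC_{X_m;Y_1,\dots,Y_n;W}\) followed by \(\varphi^\mcC_{X_1,\dots,X_{m-1};X_m,Y_1,\dots,Y_n;Z}\). The only difference is that you spell out the associativity computation that the paper dismisses as ``easy to see,'' and your remark on why one must curry the last variable is a helpful gloss.
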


\begin{proof}
  Define internal \(\Hom\)\n-objects \(\und\mcC(X_1,\dots,X_m;Z)\) and
  evaluations
  \[
  \ev^\mcC_{X_1,\dots,X_m;Z}:X_1,\dots,X_m,\und\mcC(X_1,\dots,X_m;Z)\to
  Z
  \]
  by induction on \(m\). For \(m=0\) choose \(\und\mcC(;Z)=Z\) and
  \(\ev^\mcC_{;Z}=1^\mcC_Z:Z\to Z\) as explained above. For \(m=1\) we
  are already given \(\und\mcC(X;Z)\) and \(\ev^\mcC_{X;Z}\). Assume
  that we have defined \(\und\mcC(X_1,\dots,X_k;Z)\) and
  \(\ev^\mcC_{X_1,\dots,X_k;Z}\) for each \(k<m\), and that the
  function
  \[
  \varphi^\mcC_{X_1,\dots,X_k;Y_1,\dots,Y_n;Z}:
  \mcC(Y_1,\dots,Y_n;\und\mcC(X_1,\dots,X_k;Z))\to
  \mcC(X_1,\dots,X_k,Y_1,\dots,Y_n;Z)
  \]
  is a bijection, for each \(k<m\) and for each finite sequence
  \(Y_1,\dots,Y_n\) of objects of \(\mcC\). For \(X_1,\dots,X_m\),
  \(Z\in\Ob\mcC\) define
  \[
  \und\mcC(X_1,\dots,X_m;Z)\defeq\und\mcC(X_m;\und\mcC(X_1,\dots,X_{m-1};Z)).
  \]
  The evaluation morphism \(\ev^\mcC_{X_1,\dots,X_m;Z}\) is given by
  the composite
  \[
  \begin{xy}
    (0,15)*+{X_1,\dots,X_m,\und\mcC(X_m;\und\mcC(X_1,\dots,X_{m-1};Z))}="1";
    (0,0)*+{X_1,\dots,X_{m-1},\und\mcC(X_1,\dots,X_{m-1};Z)}="2";
    (0,-15)*+{Z.}="3";
    {\ar@{->}^-{1^\mcC_{X_1},\dots,1^\mcC_{X_{m-1}},\ev^\mcC_{X_m;\und\mcC(X_1,\dots,X_{m-1};Z)}}
      "1";"2"}; {\ar@{->}^-{\ev^\mcC_{X_1,\dots,X_{m-1};Z}} "2";"3"};
  \end{xy}
  \]
  It is easy to see that with these choices the function
  \(\varphi^\mcC_{X_1,\dots,X_m;Y_1,\dots,Y_n;Z}\) decomposes as
  \[
  \begin{xy}
    (0,15)*+{\mcC(Y_1,\dots,Y_n;\und\mcC(X_1,\dots,X_m;Z))}="1";
    (0,0)*+{\mcC(X_m,Y_1,\dots,Y_n;\und\mcC(X_1,\dots,X_{m-1};Z))}="2";
    (0,-15)*+{\mcC(X_1,\dots,X_m,Y_1,\dots,Y_n;Z),}="3";
    {\ar@{->}^-{\varphi^\mcC_{X_m;Y_1,\dots,Y_n;\und\mcC(X_1,\dots,X_{m-1};Z)}}_-\wr
      "1";"2"};
    {\ar@{->}^-{\varphi^\mcC_{X_1,\dots,X_{m-1};X_m,Y_1,\dots,Y_n;Z}}_-\wr
      "2";"3"};
  \end{xy}
  \]
  hence it is a bijection, and the induction goes through.
\end{proof}

\begin{notation}
  For each morphism \(f:X_1,\dots,X_n\to Y\) with \(n\ge1\), denote by
  \(\langle f\rangle\) the morphism
  \((\varphi_{X_1;X_2,\dots,X_n;Z})^{-1}(f):X_2,\dots,X_n\to\und\mcC(X_1;Y)\).
  In other words, \(\langle f\rangle\) is uniquely determined by the equation
  \[
  \bigl[ X_1,X_2,\dots,X_n\rto{1^\mcC_{X_1},\langle
    f\rangle}X_1,\und\mcC(X_1;Y)\rto{\ev^\mcC_{X_1;Y}}Y \bigr]=f.
  \]
\end{notation}

Clearly we can enrich in multicategories. We leave it as an easy
exercise for the reader to spell out the definitions of categories and
functors enriched in a multicategory \(\mcV\).

\begin{proposition}
  A closed multicategory \(\mcC\) gives rise to a \(\mcC\)\n-category
  \(\und\mcC\) as follows. The objects of \(\und\mcC\) are those of
  \(\mcC\). For each pair \(X,Y\in\Ob\mcC\), the \(\Hom\)\n-object
  \(\und\mcC(X;Y)\) is the internal \(\Hom\)\n-object of \(\mcC\). For
  each \(X,Y,Z\in\Ob\mcC\), the composition morphism
  \(\mu_{\und\mcC}:\und\mcC(X;Y),\und\mcC(Y;Z)\to\und\mcC(X;Z)\) is
  uniquely determined by requiring the commutativity in the diagram
  \[
  \begin{xy}
    (-22,9)*+{X,\und\mcC(X;Y),\und\mcC(Y;Z)}="1";
    (22,9)*+{X,\und\mcC(X;Z)}="2"; (-22,-9)*+{Y,\und\mcC(Y;Z)}="3";
    (22,-9)*+{Z}="4"; {\ar@{->}^-{1^\mcC_X,\mu_{\und\mcC}} "1";"2"};
    {\ar@{->}_-{\ev^\mcC_{X;Y},1^\mcC_{\und\mcC(Y;Z)}} "1";"3"};
    {\ar@{->}^-{\ev^\mcC_{X;Z}} "2";"4"}; {\ar@{->}^-{\ev^\mcC_{Y;Z}}
      "3";"4"};
  \end{xy}
  \]
  The identity of an object \(X\in\Ob\mcC\) is
  \(1^{\und\mcC}_X=\langle 1^\mcC_X\rangle:()\to\und\mcC(X;X)\).
\end{proposition}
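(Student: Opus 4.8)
The plan is to first extract the composition morphism from the universal property of closedness, and then to verify the \(\mcC\)-category axioms by transporting each of them through the appropriate bijection \(\varphi^\mcC\). Throughout I write \(\mu^{X,Y,Z}:\und\mcC(X;Y),\und\mcC(Y;Z)\to\und\mcC(X;Z)\) for the composition morphism attached to a triple \(X,Y,Z\). For the first assertion, note that the right-hand path of the defining square, \(\bigl[X,\und\mcC(X;Y),\und\mcC(Y;Z)\rto{\ev^\mcC_{X;Y},1}Y,\und\mcC(Y;Z)\rto{\ev^\mcC_{Y;Z}}Z\bigr]\), is a fixed morphism with target \(Z\). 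Closedness of \(\mcC\) with \(m=1\) supplies the bijection
\[
\varphi^\mcC_{X;\und\mcC(X;Y),\und\mcC(Y;Z);Z}:\mcC(\und\mcC(X;Y),\und\mcC(Y;Z);\und\mcC(X;Z))\to\mcC(X,\und\mcC(X;Y),\und\mcC(Y;Z);Z),
\]
which sends \(g\) to \(\bigl[X,\und\mcC(X;Y),\und\mcC(Y;Z)\rto{1^\mcC_X,g}X,\und\mcC(X;Z)\rto{\ev^\mcC_{X;Z}}Z\bigr]\), i.e.\ exactly to the left-hand path of the square. Hence \(\mu^{X,Y,Z}\) exists and is unique, being the preimage of the fixed right-hand morphism.

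Next I would isolate a \emph{reduction principle}. Each axiom of a \(\mcC\)-category is an equality of two morphisms whose common target is an internal \(\Hom\)-object \(\und\mcC(X;T)\). Since \(\varphi^\mcC_{X;-;T}\) is injective, two such morphisms \(a,b:W_1,\dots,W_n\to\und\mcC(X;T)\) coincide as soon as \(\ev^\mcC_{X;T}\circ(1^\mcC_X,a)=\ev^\mcC_{X;T}\circ(1^\mcC_X,b)\). I would therefore check every axiom after this ``evaluation'', using the defining square for \(\mu\) to rewrite each occurrence of \(\ev^\mcC_{X;-}\circ(1^\mcC_X,\mu^{X,\cdot,\cdot})\) as a shorter nested composite of evaluation morphisms, and using associativity of composition in \(\mcC\) to rearrange the result.

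For associativity, the two triple composites \((\mu^{X,Y,Z},1)\cdot\mu^{X,Z,W}\) and \((1,\mu^{Y,Z,W})\cdot\mu^{X,Y,W}\) are morphisms \(\und\mcC(X;Y),\und\mcC(Y;Z),\und\mcC(Z;W)\to\und\mcC(X;W)\), so by the reduction principle it suffices to evaluate them. Applying the defining square for \(\mu^{X,Z,W}\) and then for \(\mu^{X,Y,Z}\) rewrites the evaluation of the first composite as the fully nested evaluation obtained by applying \(\ev^\mcC_{X;Y}\), then \(\ev^\mcC_{Y;Z}\), then \(\ev^\mcC_{Z;W}\); applying the defining square for \(\mu^{X,Y,W}\) and then for \(\mu^{Y,Z,W}\) rewrites the evaluation of the second composite as the very same morphism. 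Hence the two agree and associativity follows.

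For the unit laws, the identity is \(1^{\und\mcC}_X=\langle 1^\mcC_X\rangle\), characterized by \((1^\mcC_X,\langle 1^\mcC_X\rangle)\cdot\ev^\mcC_{X;X}=1^\mcC_X\). For the right unit law I would plug \(\langle 1^\mcC_Y\rangle\) into the second argument of \(\mu^{X,Y,Y}\); evaluating via the defining square turns \(\ev^\mcC_{X;Y}\circ(1^\mcC_X,-)\) into \(\ev^\mcC_{Y;Y}\circ(\ev^\mcC_{X;Y},\langle 1^\mcC_Y\rangle)\), which collapses to \(\ev^\mcC_{X;Y}\) by the characterization of \(\langle 1^\mcC_Y\rangle\) together with associativity in \(\mcC\). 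This matches the evaluation of \(1^\mcC_{\und\mcC(X;Y)}\), so the reduction principle yields the right unit law; the left unit law is symmetric, using \(\langle 1^\mcC_X\rangle\) in the first argument of \(\mu^{X,X,Y}\). I expect no conceptual obstacle here: once the reduction principle is in place, every step is forced by the defining square for \(\mu_{\und\mcC}\) and by injectivity of \(\varphi^\mcC\). The main work is purely bookkeeping — tracking which slot each nullary identity and each evaluation occupies under multicategorical substitution, and invoking associativity of composition in \(\mcC\) at each rewriting.
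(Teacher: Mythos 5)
Your proposal is correct and is exactly the standard argument the paper alludes to when it says ``the proof is similar to that for a closed monoidal category'': extract \(\mu_{\und\mcC}\) as the unique \(\varphi^\mcC\)-preimage of \((\ev^\mcC_{X;Y},1)\cdot\ev^\mcC_{Y;Z}\), then verify associativity and the unit laws by applying \((1^\mcC_X,-)\cdot\ev^\mcC\) and invoking injectivity of \(\varphi^\mcC\). The only (cosmetic) slip is calling \((\ev^\mcC_{X;Y},1)\cdot\ev^\mcC_{Y;Z}\) the ``right-hand path'' of the square, whereas in the displayed diagram it is the left-bottom path; the mathematics is unaffected.
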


\begin{proof}
  The proof is similar to that for a closed monoidal category.
\end{proof}
\begin{notation}
  For each morphism \(f:X_1,\dots,X_n\to Y\) and object \(Z\) of a
  closed multicategory \(\mcC\), there exists a unique morphism
  \(\und\mcC(f;Z):\und\mcC(Y;Z)\to\und\mcC(X_1,\dots,X_n;Z)\) such
  that the diagram
  \[
  \begin{xy}
    (-37,9)*+{X_1,\dots,X_n,\und\mcC(Y;Z)}="1";
    (37,9)*+{X_1,\dots,X_n,\und\mcC(X_1,\dots,X_n;Z)}="2";
    (-37,-9)*+{Y,\und\mcC(Y;Z)}="3"; (37,-9)*+{Z}="4";
    {\ar@{->}^-{1^\mcC_{X_1},\dots,1^\mcC_{X_n},\und\mcC(f;Z)}
      "1";"2"}; {\ar@{->}_-{f,1^\mcC_{\und\mcC(Y;Z)}} "1";"3"};
    {\ar@{->}^-{\ev^\mcC_{X_1,\dots,X_n;Z}} "2";"4"};
    {\ar@{->}^-{\ev^\mcC_{Y;Z}} "3";"4"};
  \end{xy}
  \]
  in \(\mcC\) is commutative. In particular, if \(n=0\), then
  \(\und\mcC(f;Z)=(f,1^\mcC_{\und\mcC(Y;Z)})\cdot\ev^\mcC_{Y;Z}\). If
  \(n=1\), then \(\und\mcC(f;Z)=\langle(f,
  1^\mcC_{\und\mcC(Y;Z)})\cdot\ev^\mcC_{Y;Z}\rangle\).  For each
  sequence of morphisms \(f_1:X_1\to Y_1\), \dots, \(f_n:X_n\to Y_n\)
  in \(\mcC\) there is a unique morphism \(\und\mcC(f_1,\dots,f_n;Z):
  \und\mcC(Y_1,\dots,Y_n;Z) \to \und\mcC(X_1,\dots,X_n;Z)\) such that
  the diagram
  \[
  \begin{xy}
    (-45,9)*+{X_1,\dots,X_n,\und\mcC(Y_1,\dots,Y_n;Z)}="1";
    (45,9)*+{X_1,\dots,X_n,\und\mcC(X_1,\dots,X_n;Z)}="2";
    (-45,-9)*+{Y_1,\dots,Y_n,\und\mcC(Y_1,\dots,Y_n;Z)}="3";
    (45,-9)*+{Z}="4";
    {\ar@{->}^-{1^\mcC_{X_1},\dots,1^\mcC_{X_n},\und\mcC(f_1,\dots,f_n;Z)}
      "1";"2"};
    {\ar@{->}_-{f_1,\dots,f_n,1^\mcC_{\und\mcC(Y_1,\dots,Y_n;Z)}}
      "1";"3"};
    {\ar@{->}^-{\ev^\mcC_{X_1,\dots,X_n;Z}} "2";"4"};
    {\ar@{->}^-{\ev^\mcC_{Y_1,\dots,Y_n;Z}} "3";"4"}
  \end{xy}
  \]
  in \(\mcC\) is commutative. Similarly, for each morphism \(g:Y\to
  Z\) in \(\mcC\), there exists a unique morphism
  \(\und\mcC(X_1,\dots,X_n;g):\und\mcC(X_1,\dots,X_n;Y)\to\und\mcC(X_1,\dots,X_n;Z)\)
  such that the diagram
  \[
  \begin{xy}
    (-45,9)*+{X_1,\dots,X_n,\und\mcC(X_1,\dots,X_n;Y)}="1";
    (45,9)*+{X_1,\dots,X_n,\und\mcC(X_1,\dots,X_n;Z)}="2";
    (-45,-9)*+{Y}="3"; 
    (45,-9)*+{Z}="4";
    {\ar@{->}^-{1^\mcC_{X_1},\dots,1^\mcC_{X_n},\und\mcC(X_1,\dots,X_n;g)}
      "1";"2"}; {\ar@{->}_-{\ev^\mcC_{X_1,\dots,X_n;Y}} "1";"3"};
    {\ar@{->}^-{\ev^\mcC_{X_1,\dots,X_n;Z}} "2";"4"}; {\ar@{->}^-{g}
      "3";"4"};
  \end{xy}
  \]
  in \(\mcC\) is commutative. In particular, if \(n=0\), then our
  conventions force \(\und\mcC(;g)=g\). If \(n=1\), then
  \(\und\mcC(X;g)=\langle\ev^\mcC_{X;Y}\cdot g\rangle\).
\end{notation}

\begin{lemma}\label{lem-aux-identities}
  Suppose that \(f_1:X^1_1,\dots,X^{k_1}_1\to
  Y_1\), \dots, \(f_n:X^1_n,\dots,X^{k_n}_n\to Y_n\),
  \(g:Y_1,\dots,Y_n\to Z\) are morphisms in a closed multicategory
  \(\mcC\).
  \begin{itemize}
  \item[(a)] If \(k_1=0\), i.e., \(f_1\) is a morphism \(()\to Y_1\),
    then \((f_1,\dots,f_n)\cdot g\) is equal to the composite
    \begin{equation*}
      X^1_2,\dots,X^{k_2}_2,\dots,X^1_n,\dots,X^{k_n}_n\rto{f_2,\dots,f_n}
      Y_2,\dots,Y_n\rto{\langle g\rangle}\und\mcC(Y_1;Z)\rto{\und\mcC(f_1;Z)}\und\mcC(;Z)=Z.
    \end{equation*}
  \item[(b)] If \(k_1=1\), i.e., \(f_1\) is a morphism \(X^1_1\to
    Y_1\), then \(\langle (f_1,\dots,f_n)\cdot g\rangle\) is equal to
    the composite
    \begin{equation*}
      X^1_2,\dots,X^{k_2}_2,\dots,X^1_n,\dots,X^{k_n}_n\rto{f_2,\dots,f_n}
      Y_2,\dots,Y_n\rto{\langle g\rangle}\und\mcC(Y_1;Z)\rto{\und\mcC(f_1;Z)}\und\mcC(X^1_1;Z).
    \end{equation*}
  \item[(c)] If \(k_1\ge1\), then \(\langle (f_1,\dots,f_n)\cdot
    g\rangle\) is equal to the composite
    \begin{align*}
      X^2_1,\dots,X^{k_1}_1,X^1_2,\dots,X^{k_2}_2,\dots,X^1_n,\dots,X^{k_n}_n
      &\rto{\langle f_1\rangle,f_2,\dots,f_n}
      \und\mcC(X^1_1;Y_1),Y_2,\dots,Y_n
      \\
      &\rto[\hphantom{\langle f_1\rangle,f_2,\dots,f_n}]{1,\langle
        g\rangle}\und\mcC(X^1_1;Y_1),\und\mcC(Y_1;Z)
      \\
      &\rto[\hphantom{\langle
        f_1\rangle,f_2,\dots,f_n}]{\mu_{\und\mcC}}\und\mcC(X^1_1;Z).
    \end{align*}
  \item[(d)] if \(n=1\), then \(\langle f_1\cdot g\rangle=\bigl[
    X^2_1,\dots,X^{k_1}_1\rto{\langle
      f_1\rangle}\und\mcC(X^1_1;Y_1)\rto{\und\mcC(X^1_1;g)}\und\mcC(X^1_1;Z)
    \bigr]\).
  \end{itemize}
\end{lemma}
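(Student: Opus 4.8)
The plan is to treat all four identities by a single principle: the defining universal property of the internal $\Hom$-objects. A morphism with target an internal $\Hom$-object $\und\mcC(X^1_1;Z)$ is completely determined by its image under $\varphi^\mcC_{X^1_1;-;Z}$, that is, by the composite with $1^\mcC_{X^1_1}$ followed by $\ev^\mcC_{X^1_1;Z}$, since this map is a bijection. Hence, to prove (b), (c), and (d)---each of which asserts an equality between $\langle(f_1,\dots,f_n)\cdot g\rangle$ (respectively $\langle f_1\cdot g\rangle$) and an explicit composite $P$ into $\und\mcC(X^1_1;Z)$---it suffices to verify that $(1^\mcC_{X^1_1},P)\cdot\ev^\mcC_{X^1_1;Z}=(f_1,\dots,f_n)\cdot g$, because the left-hand side of the asserted identity already satisfies this equation by the very definition of the angle bracket. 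The tools for unwinding $(1^\mcC_{X^1_1},P)\cdot\ev^\mcC_{X^1_1;Z}$ are the associativity and identity axioms of $\mcC$ together with the defining equations of $\langle-\rangle$, $\und\mcC(f;Z)$, $\und\mcC(X;g)$, and $\mu_{\und\mcC}$.

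Part (a) stands apart, as both sides are genuine morphisms into $Z$ (using the convention $\und\mcC(;Z)=Z$), so no universal property is invoked. Here I would expand $\und\mcC(f_1;Z)=(f_1,1^\mcC_{\und\mcC(Y_1;Z)})\cdot\ev^\mcC_{Y_1;Z}$ for the nullary $f_1$ and apply associativity repeatedly: the composite $(f_2,\dots,f_n)\cdot\langle g\rangle\cdot\und\mcC(f_1;Z)$ rearranges first to $(f_1,(f_2,\dots,f_n)\cdot\langle g\rangle)\cdot\ev^\mcC_{Y_1;Z}$, and then to $(f_1,\dots,f_n)\cdot\bigl[(1^\mcC_{Y_1},\langle g\rangle)\cdot\ev^\mcC_{Y_1;Z}\bigr]=(f_1,\dots,f_n)\cdot g$, the last equality being the defining property of $\langle g\rangle$.

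For (b) I would take $P=(f_2,\dots,f_n)\cdot\langle g\rangle\cdot\und\mcC(f_1;Z)$ and compute $(1^\mcC_{X^1_1},P)\cdot\ev^\mcC_{X^1_1;Z}$. Associativity moves the outer evaluation past $\und\mcC(f_1;Z)$, whereupon the defining equation $(1^\mcC_{X^1_1},\und\mcC(f_1;Z))\cdot\ev^\mcC_{X^1_1;Z}=(f_1,1^\mcC_{\und\mcC(Y_1;Z)})\cdot\ev^\mcC_{Y_1;Z}$ turns the expression into $(f_1,(f_2,\dots,f_n)\cdot\langle g\rangle)\cdot\ev^\mcC_{Y_1;Z}$, which collapses to $(f_1,\dots,f_n)\cdot g$ exactly as in (a). Part (c) runs along the same lines with $P=(\langle f_1\rangle,f_2,\dots,f_n)\cdot(1,\langle g\rangle)\cdot\mu_{\und\mcC}$; after regrouping $P$ to $(\langle f_1\rangle,(f_2,\dots,f_n)\cdot\langle g\rangle)\cdot\mu_{\und\mcC}$, the crucial move is to push $(1^\mcC_{X^1_1},-)\cdot\ev^\mcC_{X^1_1;Z}$ through $\mu_{\und\mcC}$ via its defining equation $(1^\mcC_{X^1_1},\mu_{\und\mcC})\cdot\ev^\mcC_{X^1_1;Z}=(\ev^\mcC_{X^1_1;Y_1},1^\mcC_{\und\mcC(Y_1;Z)})\cdot\ev^\mcC_{Y_1;Z}$; associativity then isolates $(1^\mcC_{X^1_1},\langle f_1\rangle)\cdot\ev^\mcC_{X^1_1;Y_1}=f_1$, and the remainder again reduces to $(f_1,\dots,f_n)\cdot g$. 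Finally, (d) is the case $n=1$ of (c), once one checks---by the same universal property argument now applied to $\und\mcC(X^1_1;g)$---that $(1^\mcC_{\und\mcC(X^1_1;Y_1)},\langle g\rangle)\cdot\mu_{\und\mcC}=\und\mcC(X^1_1;g)$.

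The step demanding the most care is the bookkeeping in (c): every regrouping is an instance of the single associativity axiom, but one must keep exact track of which object sits in which input slot of each evaluation and of $\mu_{\und\mcC}$, so that its defining equation is applied in the correct slot and the identities $1^\mcC_{X^1_1}$ and $1^\mcC_{\und\mcC(Y_1;Z)}$ cancel where intended. No single step is deep, but the indexing is precisely where an error would most easily slip in.
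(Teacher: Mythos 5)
Your proposal is correct and takes essentially the same approach as the paper: reduce each identity to the defining universal property of $\langle-\rangle$, $\und\mcC(f;Z)$, $\und\mcC(X;g)$ and $\mu_{\und\mcC}$, and unwind with the associativity and identity axioms (the paper writes out only part (a) explicitly and declares the remaining parts routine). Your treatment of (b)--(d) is in fact more detailed than the published one, and the computations you sketch do go through as stated.
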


\begin{proof}
  The proofs are easy and consist of checking the definitions. For
  example, in order to prove (a) note that
  \[
  \und\mcC(f_1;Z)=\bigl[
  \und\mcC(Y_1;Z)\rto{f_1,1^\mcC_{\und\mcC(Y_1;Z)}}Y_1,\und\mcC(Y_1;Z)\rto{\ev^\mcC_{Y_1;Z}}Z
  \bigr],
  \]
  therefore the composite in (a) is equal to
  \begin{multline*}
    \bigl[
    X^1_2,\dots,X^{k_2}_2,\dots,X^1_n,\dots,X^{k_n}_n\rto{f_2,\dots,f_n}
    Y_2,\dots,Y_n\rto{\langle
      g\rangle}\und\mcC(Y_1;Z)\rto{f_1,1^\mcC_{\und\mcC(Y_1;Z)}}
    Y_1,\und\mcC(Y_1;Z)\rto{\ev^\mcC_{Y_1;Z}}Z \bigr]
    \\
    =\bigl[
    X^1_2,\dots,X^{k_2}_2,\dots,X^1_n,\dots,X^{k_n}_n\rto{f_1,f_2,\dots,f_n}
    Y_1,Y_2,\dots,Y_n\rto{1^\mcC_{Y_1},\langle
      g\rangle}Y_1,\und\mcC(Y_1;Z)\rto{\ev^\mcC_{Y_1;Z}}Z \bigr].
  \end{multline*}
  The last two arrows compose to
  \(\varphi^\mcC_{Y_1;Y_2,\dots,Y_n;Z}(\langle
  g\rangle)=g:Y_1,\dots,Y_n\to Z\), hence the whole composite is equal
  to \((f_1,\dots,f_n)\cdot g\).
\end{proof}

\begin{lemma}\label{lem-C1fg-C1f-C1g}
  Let \(f:X\to Y\) and \(g:Y\to Z\) be morphisms in a closed
  multicategory \(\mcC\). Then for each \(W\in\Ob\mcC\) holds
  \(\und\mcC(W;f\cdot g)=\und\mcC(W;f)\cdot\und\mcC(W;g)\).
\end{lemma}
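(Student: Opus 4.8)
The plan is to reduce everything to the description of $\und\mcC(W;-)$ on morphisms recorded in the Notation immediately before \lemref{lem-aux-identities}: for any unary morphism $h:X\to Z$ one has $\und\mcC(W;h)=\langle\ev^\mcC_{W;X}\cdot h\rangle$, and then to invoke \lemref{lem-aux-identities}(d).

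First I would rewrite the term $\und\mcC(W;f\cdot g)$ using this description. With $f:X\to Y$ and $g:Y\to Z$, the composite $f\cdot g:X\to Z$ gives $\und\mcC(W;f\cdot g)=\langle\ev^\mcC_{W;X}\cdot(f\cdot g)\rangle$. Since composition of unary morphisms in a multicategory is just ordinary associative composition, $\ev^\mcC_{W;X}\cdot(f\cdot g)=(\ev^\mcC_{W;X}\cdot f)\cdot g$, so that $\und\mcC(W;f\cdot g)=\langle(\ev^\mcC_{W;X}\cdot f)\cdot g\rangle$.

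Next I would apply \lemref{lem-aux-identities}(d) with $n=1$, taking $f_1$ to be the morphism $\ev^\mcC_{W;X}\cdot f:W,\und\mcC(W;X)\to Y$ (so that $X^1_1=W$, $X^2_1=\und\mcC(W;X)$, $Y_1=Y$) and the outer unary morphism to be $g:Y\to Z$. Part (d) yields $\langle(\ev^\mcC_{W;X}\cdot f)\cdot g\rangle=\langle\ev^\mcC_{W;X}\cdot f\rangle\cdot\und\mcC(W;g)$. Finally, $\langle\ev^\mcC_{W;X}\cdot f\rangle$ is exactly $\und\mcC(W;f)$ by the same description applied to $f:X\to Y$, and stringing the equalities together gives $\und\mcC(W;f\cdot g)=\und\mcC(W;f)\cdot\und\mcC(W;g)$.

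The only point requiring care—and what I expect to be the main obstacle should one not appeal to \lemref{lem-aux-identities}(d)—is the bookkeeping of multicategory associativity. Arguing directly from the universal property, one would instead use the bijectivity of $\varphi^\mcC_{W;\und\mcC(W;X);Z}$ and verify that $\und\mcC(W;f)\cdot\und\mcC(W;g)$ satisfies the defining equation of $\und\mcC(W;f\cdot g)$, namely $(1^\mcC_W,\und\mcC(W;f)\cdot\und\mcC(W;g))\cdot\ev^\mcC_{W;Z}=\ev^\mcC_{W;X}\cdot f\cdot g$; establishing this means peeling off the two evaluations one at a time via the defining equations of $\und\mcC(W;g)$ and $\und\mcC(W;f)$, and the resulting regroupings of inner and outer composites (together with $1^\mcC_W\cdot 1^\mcC_W=1^\mcC_W$) are precisely what \lemref{lem-aux-identities}(d) packages away. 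I would therefore present the short argument above and treat the associativity manipulations as already handled by that lemma.
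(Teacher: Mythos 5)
Your proof is correct and is essentially identical to the paper's: both rewrite $\und\mcC(W;f)$ and $\und\mcC(W;f\cdot g)$ via the formula $\und\mcC(W;h)=\langle\ev^\mcC_{W;X}\cdot h\rangle$ and then apply Lemma~\ref{lem-aux-identities}(d) with $f_1=\ev^\mcC_{W;X}\cdot f$ and outer morphism $g$. The only difference is the direction in which you read the chain of equalities, which is immaterial.
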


\begin{proof}
  The composite \(\und\mcC(W;f)\cdot\und\mcC(W;g)\) can be written as
  \[
  \und\mcC(W;X)\rto{\langle\ev^\mcC_{W;X}\cdot
    f\rangle}\und\mcC(W;Y)\rto{\und\mcC(W;g)}\und\mcC(W;Z),
  \]
  which is equal to \(\langle\ev^\mcC_{W;X}\cdot f\cdot
  g\rangle=\und\mcC(W;f\cdot g)\) by
  \propref{lem-aux-identities},~(d).
\end{proof}

\begin{lemma}
  Let \(f:W\to X\) and \(g:X\to Y\) be morphisms in a closed
  multicategory \(\mcC\). Then for each \(Z\in\Ob\mcC\) holds
  \(\und\mcC(f\cdot g;Z)=\und\mcC(g;Z)\cdot\und\mcC(f;Z)\).
\end{lemma}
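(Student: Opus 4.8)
The plan is to establish this contravariant functoriality of the internal $\Hom$ exactly as its covariant counterpart \lemref{lem-C1fg-C1f-C1g} was handled: reduce the whole statement to a single application of \lemref{lem-aux-identities}. First I would record the types. Since $f\colon W\to X$ and $g\colon X\to Y$ are unary, the composite $f\cdot g\colon W\to Y$ is unary as well, and the three morphisms in play are $\und\mcC(f\cdot g;Z)\colon\und\mcC(Y;Z)\to\und\mcC(W;Z)$, $\und\mcC(g;Z)\colon\und\mcC(Y;Z)\to\und\mcC(X;Z)$, and $\und\mcC(f;Z)\colon\und\mcC(X;Z)\to\und\mcC(W;Z)$, so the asserted equation typechecks.

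The key computation then runs as follows. By the $n=1$ case of the definition of $\und\mcC(-;Z)$, set $q\defeq(g,1^\mcC_{\und\mcC(Y;Z)})\cdot\ev^\mcC_{Y;Z}\colon X,\und\mcC(Y;Z)\to Z$, so that $\und\mcC(g;Z)=\langle q\rangle$. Using associativity together with the identity axiom of $\mcC$, I would rewrite $(f,1^\mcC_{\und\mcC(Y;Z)})\cdot q=(f\cdot g,1^\mcC_{\und\mcC(Y;Z)})\cdot\ev^\mcC_{Y;Z}$; applying $\langle-\rangle$ and invoking the $n=1$ definition once more, the right-hand side is precisely $\und\mcC(f\cdot g;Z)$, so that $\und\mcC(f\cdot g;Z)=\langle(f,1^\mcC_{\und\mcC(Y;Z)})\cdot q\rangle$. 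Now I apply \lemref{lem-aux-identities}(b) with $n=2$, $f_1=f\colon W\to X$, $f_2=1^\mcC_{\und\mcC(Y;Z)}$, and outer morphism $g=q$: part (b) identifies $\langle(f,1^\mcC_{\und\mcC(Y;Z)})\cdot q\rangle$ with the composite $\langle q\rangle\cdot\und\mcC(f;Z)=\und\mcC(g;Z)\cdot\und\mcC(f;Z)$, which yields the claim.

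I expect the only real care to lie in the bookkeeping rather than in any genuine obstacle: one must correctly match the contravariant factor $\und\mcC(f;Z)$ to the morphism produced at the right-hand end of \lemref{lem-aux-identities}(b), and check that the regrouping $(f,1)\cdot\bigl((g,1)\cdot\ev^\mcC_{Y;Z}\bigr)=(f\cdot g,1)\cdot\ev^\mcC_{Y;Z}$ is a legitimate instance of the associativity and identity axioms of a multicategory. As an alternative that avoids \lemref{lem-aux-identities} altogether, one could verify directly that $\und\mcC(g;Z)\cdot\und\mcC(f;Z)$ satisfies the universal equation characterising $\und\mcC(f\cdot g;Z)$, namely $(1^\mcC_W,\und\mcC(g;Z)\cdot\und\mcC(f;Z))\cdot\ev^\mcC_{W;Z}=(f\cdot g,1^\mcC_{\und\mcC(Y;Z)})\cdot\ev^\mcC_{Y;Z}$, by substituting the defining equations of $\und\mcC(f;Z)$ and then $\und\mcC(g;Z)$ and appealing to uniqueness; this route is longer but entirely mechanical.
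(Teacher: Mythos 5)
Your argument is correct and is essentially identical to the paper's own proof: both write \(\und\mcC(g;Z)=\langle(g,1^\mcC_{\und\mcC(Y;Z)})\cdot\ev^\mcC_{Y;Z}\rangle\), use \lemref{lem-aux-identities}(b) to identify \(\langle q\rangle\cdot\und\mcC(f;Z)\) with \(\langle(f,1)\cdot q\rangle\), and conclude via the associativity/identity regrouping \((f,1)\cdot((g,1)\cdot\ev^\mcC_{Y;Z})=(f\cdot g,1)\cdot\ev^\mcC_{Y;Z}\). The only difference is the direction in which the chain of equalities is read.
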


\begin{proof}
  The composite \(\und\mcC(g;Z)\cdot\und\mcC(f;Z)\) can be written as
  \[
  \und\mcC(Y;Z)
  \rto{\langle(g,1^\mcC_{\und\mcC(Y;Z)})\cdot\ev^\mcC_{Y;Z}\rangle}
  \und\mcC(X;Z) \rto{\und\mcC(f;Z)}\und\mcC(W;Z),
  \]
  which is equal to \(\langle(f,1^\mcC_{\und\mcC(Y;Z)})\cdot
  ((g,1^\mcC_{\und\mcC(Y;Z)})\cdot\ev^\mcC_{Y;Z})\rangle=\langle(f\cdot
  g,1^\mcC_{\und\mcC(Y;Z)})\cdot\ev^\mcC_{Y;Z}\rangle=\und\mcC(f\cdot
  g;Z)\) by \propref{lem-aux-identities},~(b).
\end{proof}

\begin{lemma}\label{lem-Cf1-C1g-C1g-Cf1}
  Let \(f:W\to X\) and \(g:Y\to Z\) be morphisms in a closed
  multicategory \(\mcC\). Then
  \(\und\mcC(f;Y)\cdot\und\mcC(W;g)=\und\mcC(X;g)\cdot\und\mcC(f;Z)\).
\end{lemma}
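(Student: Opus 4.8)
The plan is to notice that both composites in the statement are morphisms $\und\mcC(X;Y)\to\und\mcC(W;Z)$, and to show that each of them equals $\langle\theta\rangle$, where $\theta$ is the single morphism
\[
\theta=\bigl[\,W,\und\mcC(X;Y)\rto{f,1^\mcC_{\und\mcC(X;Y)}}X,\und\mcC(X;Y)\rto{\ev^\mcC_{X;Y}}Y\rto{g}Z\,\bigr].
\]
Since $\langle-\rangle=(\varphi^\mcC_{W;\und\mcC(X;Y);Z})^{-1}$ is a bijection, exhibiting $\theta$ as the image of both sides forces them to coincide.

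For the left-hand side, I would start from the description $\und\mcC(f;Y)=\langle(f,1^\mcC_{\und\mcC(X;Y)})\cdot\ev^\mcC_{X;Y}\rangle$ recorded in the notation preceding \lemref{lem-aux-identities}. Applying \lemref{lem-aux-identities}~(d) with $n=1$, to the morphism $f_1=(f,1^\mcC_{\und\mcC(X;Y)})\cdot\ev^\mcC_{X;Y}:W,\und\mcC(X;Y)\to Y$ (so that $X^1_1=W$) and the given $g:Y\to Z$, yields $\langle f_1\cdot g\rangle=\langle f_1\rangle\cdot\und\mcC(W;g)=\und\mcC(f;Y)\cdot\und\mcC(W;g)$. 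As $f_1\cdot g=\theta$ by associativity, this reads $\und\mcC(f;Y)\cdot\und\mcC(W;g)=\langle\theta\rangle$.

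For the right-hand side, I would use the description $\und\mcC(X;g)=\langle\ev^\mcC_{X;Y}\cdot g\rangle$ and invoke \lemref{lem-aux-identities}~(b) with $n=2$, $f_1=f:W\to X$ (so $k_1=1$, $X^1_1=W$, $Y_1=X$), $f_2=1^\mcC_{\und\mcC(X;Y)}$, and the final morphism taken to be $\ev^\mcC_{X;Y}\cdot g:X,\und\mcC(X;Y)\to Z$. The bracket of the latter is precisely $\und\mcC(X;g)$, so the composite produced by (b) is $1^\mcC_{\und\mcC(X;Y)}\cdot\und\mcC(X;g)\cdot\und\mcC(f;Z)=\und\mcC(X;g)\cdot\und\mcC(f;Z)$, whereas the left side of (b) is $\langle(f,1^\mcC_{\und\mcC(X;Y)})\cdot\ev^\mcC_{X;Y}\cdot g\rangle=\langle\theta\rangle$. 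Hence $\und\mcC(X;g)\cdot\und\mcC(f;Z)=\langle\theta\rangle$ too, and the two sides agree.

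The only real work is bookkeeping: matching the generic indices of \lemref{lem-aux-identities} to the present data (which object plays $X^1_1$, $Y_1$, etc.) and checking that $f_1\cdot g$ and $(f,1^\mcC_{\und\mcC(X;Y)})\cdot(\ev^\mcC_{X;Y}\cdot g)$ both reduce to the same $\theta$ via the identity and associativity axioms. I do not anticipate a genuine obstacle, as both reductions are formal. Should one wish to avoid \lemref{lem-aux-identities} altogether, an equivalent route is to note that $\varphi^\mcC_{W;\und\mcC(X;Y);Z}$ is injective and to verify that $(1^\mcC_W,-)\cdot\ev^\mcC_{W;Z}$ sends both sides to $\theta$, chasing the defining commutative squares of $\und\mcC(f;Y)$, $\und\mcC(W;g)$, $\und\mcC(f;Z)$, and $\und\mcC(X;g)$ in turn.
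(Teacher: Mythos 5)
Your proposal is correct and is essentially the paper's own proof: the paper likewise observes that both composites equal \(\langle(f,1^\mcC_{\und\mcC(X;Y)})\cdot\ev^\mcC_{X;Y}\cdot g\rangle\), citing parts (b) and (d) of \lemref{lem-aux-identities}. You have merely spelled out the index bookkeeping that the paper leaves implicit.
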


\begin{proof}
  Both sides of the equation are equal to
  \(\langle(f,1^\mcC_{\und\mcC(X;Y)})\cdot\ev^\mcC_{X;Y}\cdot
  g\rangle\) by \propref{lem-aux-identities},~(b),(d).
\end{proof}

Lemmas~\ref{lem-C1fg-C1f-C1g}--\ref{lem-Cf1-C1g-C1g-Cf1} imply that
there exists a functor \(\und\mcC(-,-):\mcC^\op\times\mcC\to\mcC\),
\((X,Y)\mapsto\und\mcC(X;Y)\), defined by the formula
\(\und\mcC(f;g)=\und\mcC(f;Y)\cdot\mcC(W;g)=\und\mcC(X;g)\cdot\und\mcC(f;Z)\)
for each pair of morphisms \(f:W\to X\) and \(g:Y\to Z\) in \(\mcC\).

For each \(X,Y,Z\in\Ob\mcC\) there is a morphism
\(L^X_{YZ}:\und\mcC(Y;Z)\to\und\mcC(\und\mcC(X;Y);\und\mcC(X;Z))\)
uniquely determined by the equation
\begin{equation}
  \bigl[
  \und\mcC(X;Y),\und\mcC(Y;Z)\rto{1,L^X_{YZ}}\und\mcC(X;Y),
  \und\mcC(\und\mcC(X;Y);\und\mcC(X;Z))
  \rto{\ev^\mcC}\und\mcC(X;Z) \bigr]=\mu_{\und\mcC}.
  \label{equ-LXYZ-def}
\end{equation}

\begin{proposition}\label{prop-LX-C-functor}
  There is a \(\mcC\)\n-functor \(L^X:\und\mcC\to\und\mcC\),
  \(Y\mapsto\und\mcC(X;Y)\), with the action on \(\Hom\)\n-objects
  given by
  \(L^X_{YZ}:\und\mcC(Y;Z)\to\und\mcC(\und\mcC(X;Y);\und\mcC(X;Z))\).
\end{proposition}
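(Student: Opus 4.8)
The claim is that $L^X$ satisfies the two axioms of a $\mcC$\n-functor, namely compatibility with identities and with composition. The plan is to verify each axiom by precomposing with a suitable evaluation morphism and invoking the bijectivity of $\varphi^\mcC$, thereby reducing both axioms to the unit and associativity axioms of the $\mcC$\n-category $\und\mcC$. Throughout, the only genuine inputs are the defining equation \eqref{equ-LXYZ-def} for $L^X_{YZ}$, the diagram defining the composition $\mu_{\und\mcC}$, and the associativity of composition in the multicategory $\mcC$ (used, together with \lemref{lem-aux-identities}, only to rebracket composites into a form where those defining properties apply).

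For the identity axiom I must show that
\[
\bigl[()\rto{1^{\und\mcC}_Y}\und\mcC(Y;Y)\rto{L^X_{YY}}\und\mcC(\und\mcC(X;Y);\und\mcC(X;Y))\bigr]=1^{\und\mcC}_{\und\mcC(X;Y)}=\langle 1^\mcC_{\und\mcC(X;Y)}\rangle.
\]
Writing $A=\und\mcC(X;Y)$, both sides are morphisms $()\to\und\mcC(A;A)$, so by bijectivity of $\varphi^\mcC_{A;;A}$ it suffices to compare their images under $h\mapsto(1^\mcC_A,h)\cdot\ev^\mcC_{A;A}$. On the right-hand side this image is $1^\mcC_A$ by definition of $\langle-\rangle$. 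On the left-hand side, \eqref{equ-LXYZ-def} with $Z=Y$ gives $(1^\mcC_A,L^X_{YY})\cdot\ev^\mcC_{A;A}=\mu_{\und\mcC}$, and feeding $1^{\und\mcC}_Y$ into the second argument reduces this, by the right unit law $(1^\mcC_A,1^{\und\mcC}_Y)\cdot\mu_{\und\mcC}=1^\mcC_A$ of $\und\mcC$, again to $1^\mcC_A$. The two agree.

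For the composition axiom I must show $(L^X_{YZ},L^X_{ZW})\cdot\mu_{\und\mcC}=\mu_{\und\mcC}\cdot L^X_{YW}$ as morphisms $\und\mcC(Y;Z),\und\mcC(Z;W)\to\und\mcC(\und\mcC(X;Y);\und\mcC(X;W))$. With $A=\und\mcC(X;Y)$, $B=\und\mcC(X;Z)$, $C=\und\mcC(X;W)$, both sides are morphisms into $\und\mcC(A;C)$, so by bijectivity of $\varphi^\mcC$ it suffices to compare their images under $h\mapsto(1^\mcC_A,h)\cdot\ev^\mcC_{A;C}$. For the right-hand side, \eqref{equ-LXYZ-def} with $Z=W$ turns $(1^\mcC_A,L^X_{YW})\cdot\ev^\mcC_{A;C}$ into the composition $\mu_{\und\mcC}$ on the triple $(X;Y,W)$, so the image becomes ``first compose $Y\to Z$ with $Z\to W$, then compose $X\to Y$ with $Y\to W$'', i.e. $(1,\mu_{\und\mcC})\cdot\mu_{\und\mcC}$. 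For the left-hand side, the defining diagram of $\mu_{\und\mcC}$ splits $(1^\mcC_A,\mu_{\und\mcC})\cdot\ev^\mcC_{A;C}$ as $(\ev^\mcC_{A;B},1)\cdot\ev^\mcC_{B;C}$, and applying \eqref{equ-LXYZ-def} to each of $(1^\mcC_A,L^X_{YZ})\cdot\ev^\mcC_{A;B}$ and $(1^\mcC_B,L^X_{ZW})\cdot\ev^\mcC_{B;C}$ rewrites the image as ``first compose $X\to Y$ with $Y\to Z$, then compose with $Z\to W$'', i.e. $(\mu_{\und\mcC},1)\cdot\mu_{\und\mcC}$. These two expressions coincide by the associativity axiom of $\und\mcC$.

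The reductions are essentially forced once the evaluation morphisms are applied, so I expect no conceptual difficulty. The main obstacle is purely organizational: correctly tracking which argument slot each identity and each structure morphism occupies, and repeatedly invoking associativity of multicategory composition to rebracket the composites into the shape where \eqref{equ-LXYZ-def} and the defining diagram of $\mu_{\und\mcC}$ can be substituted. No idea beyond these universal properties is required.
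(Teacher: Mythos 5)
Your proposal is correct and follows essentially the same route as the paper: both axioms are reduced, via the bijectivity of \(\varphi^\mcC\) after precomposition with the relevant evaluation morphisms, to the unit and associativity axioms of the \(\mcC\)\n-category \(\und\mcC\), using only the defining equation of \(L^X_{YZ}\) and the defining diagram of \(\mu_{\und\mcC}\). The paper packages the composition case as a single commutative diagram whose exterior is associativity of \(\mu_{\und\mcC}\) and whose right square is the definition of \(\mu_{\und\mcC}\), which is exactly your equational computation, and it leaves the identity axiom to the reader where you spell it out.
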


\begin{proof}
  That so defined \(L^X\) preserves identities is a consequence of the
  identity axiom. The compatibility with composition is established as
  follows. Consider the diagram
  \[
  \begin{xy}
    (-60,12)*+{%
      \begin{array}{l}
        \und\mcC(X;Y),\\
        \und\mcC(Y;Z),\\
        \und\mcC(Z;W)
      \end{array}
    }="1"; (0,12)*+{%
      \begin{array}{l}
        \und\mcC(X;Y),\\
        \und\mcC(\und\mcC(X;Y);\und\mcC(X;Z)),\\
        \und\mcC(\und\mcC(X;Z);\und\mcC(X;W))
      \end{array}
    }="2"; (60,12)*+{%
      \begin{array}{l}
        \und\mcC(X;Z),\\
        \und\mcC(\und\mcC(X;Z);\und\mcC(X;W))
      \end{array}
    }="3"; (-60,-12)*+{%
      \begin{array}{l}
        \und\mcC(X;Y),\\
        \und\mcC(Y;W)
      \end{array}
    }="4"; (0,-12)*+{%
      \begin{array}{l}
        \und\mcC(X;Y),\\
        \und\mcC(\und\mcC(X;Y);\und\mcC(X;W))
      \end{array}
    }="5"; (60,-12)*+{\und\mcC(X;W)}="6";
    {\ar@{->}^-{1,L^X_{YZ},L^X_{ZW}} "1";"2"}; {\ar@{->}^-{\ev^\mcC,1}
      "2";"3"}; {\ar@{->}_-{1,\mu_{\und\mcC}} "1";"4"};
    {\ar@{->}^-{1,\mu_{\und\mcC}} "2";"5"}; {\ar@{->}^-{\ev^\mcC}
      "3";"6"}; {\ar@{->}^-{1,L^X_{YW}} "4";"5"};
    {\ar@{->}^-{\ev^\mcC} "5";"6"};
  \end{xy}
  \]
  By the definition of \(L^X\) the exterior expresses the
  associativity of \(\mu_{\und\mcC}\). The right square is the
  definition of \(\mu_{\und\mcC}\). By the closedness of \(\mcC\) the
  square
  \[
  \begin{xy}
    (-40,9)*+{\und\mcC(Y;Z),\und\mcC(Z;W)}="1";
    (40,9)*+{\und\mcC(\und\mcC(X;Y);\und\mcC(X;Z)),
      \und\mcC(\und\mcC(X;Z);\und\mcC(X;W))}="2";
    (-40,-9)*+{\und\mcC(Y;W)}="3";
    (40,-9)*+{\und\mcC(\und\mcC(X;Y);\und\mcC(X;W))}="4";
    {\ar@{->}^-{L^X_{YZ},L^X_{ZW}} "1";"2"};
    {\ar@{->}_-{\mu_{\und\mcC}} "1";"3"}; {\ar@{->}^-{\mu_{\und\mcC}}
      "2";"4"}; {\ar@{->}^-{L^X_{YW}} "3";"4"};
  \end{xy}
  \]
  is commutative, hence the assertion.
\end{proof}

\begin{definition}[{\cite[Section~4.18]{BLM}}]
  Let $\mcC$, $\mcD$ be multicategories. Let \(F:\mcC\to\mcD\) be a
  multifunctor. For each \(X_1,\dots,X_m,Z\in\Ob\mcC\), define a
  morphism in $\mcD$
  \begin{equation*}
    \und{F}_{X_1,\dots,X_m;Z}: F\und\mcC(X_1,\dots,X_m;Z)
    \to \und\mcD(FX_1,\dots,FX_m;FZ)
  \end{equation*}
  as the only morphism that makes the diagram
  \[
    \begin{xy}
      (-25,0)*+{FX_1,\dots,FX_m,F\und\mcC(X_1,\dots,X_m;Z)}="fxfc";
      (25,15)*+{FX_1\dots,FX_m,\und\mcD(FX_1,\dots,FX_m;FZ)}="fxd";
      (25,-15)*+{FZ}="fy";
      {\ar@{->}^(.4){1^\mcD_{FX_1},\dots,1^\mcD_{FX_m},\und{F}_{X_1,\dots,X_m;Z}\hskip3em} "fxfc";"fxd"};
      {\ar@{->}_(.4){F\ev^\mcC_{X_1,\dots,X_m;Z}} "fxfc";"fy"};
      {\ar@{->}^-{\ev^\mcD_{FX_1,\dots,FX_m;FZ}} "fxd";"fy"};
    \end{xy}
  \]
  commute. It is called the \emph{closing transformation} of the
  multifunctor $F$.
\end{definition}

The following properties of closing transformations can be found in
\cite[Section~4.18]{BLM}. To keep the exposition self-contained we
include their proofs here.

\begin{proposition}[{\cite[Lemma~4.19]{BLM}}]
  \label{prop-F-D1undF-phi-phi-F}
  The diagram
  \begin{equation}
    \begin{xy}
      (-40,18)*+{\mcC\bigl(Y_1,\dots,Y_n;\und\mcC(X_1,\dots,X_m;Z)\bigr)}="1";
      (40,18)*+{\mcD\bigl(FY_1,\dots,FY_n;F\und\mcC(X_1,\dots,X_m;Z)\bigr)}="2";
      (40,0)*+{\mcD\bigl(FY_1,\dots,FY_n;\und\mcD(FX_1,\dots,FX_m;FZ)\bigr)}="3";
      (-40,-18)*+{\mcC\bigl(X_1,\dots,X_m,Y_1,\dots,Y_n;Z\bigr)}="4";
      (40,-18)*+{\mcD\bigl(FX_1,\dots,FX_m,FY_1,\dots, FY_n;FZ\bigr)}="5";
      {\ar@{->}^-{F} "1";"2"};
      {\ar@{->}^-{\mcD(1;\und{F}_{X_1,\dots,X_m;Z})} "2";"3"};
      {\ar@{->}^-{\varphi^\mcD_{FX_1,\dots,FX_m;FY_1,\dots,FY_n;FZ}} "3";"5"};
      {\ar@{->}_-{\varphi^\mcC_{X_1,\dots,X_m;Y_1,\dots,Y_n;Z}} "1";"4"};
      {\ar@{->}^-{F} "4";"5"};
    \end{xy}
    \label{equ-F-underline-phi}
  \end{equation}
  commutes, for each \(m,n\in\NN\) and objects
  \(X_i,Y_j,Z\in\Ob\mcC\), \(1\le i\le m\), \(1\le j\le n\).
\end{proposition}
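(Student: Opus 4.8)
The plan is to verify commutativity by chasing an arbitrary element. Fix a morphism $f\colon Y_1,\dots,Y_n\to\und\mcC(X_1,\dots,X_m;Z)$ in $\mcC$ and follow it along the two boundary paths of the square, showing that its two images in $\mcD(FX_1,\dots,FX_m,FY_1,\dots,FY_n;FZ)$ coincide.

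First I would push $f$ down and then right. By the definition of $\varphi^\mcC$, the element $\varphi^\mcC_{X_1,\dots,X_m;Y_1,\dots,Y_n;Z}(f)$ is the composite $(1^\mcC_{X_1},\dots,1^\mcC_{X_m},f)\cdot\ev^\mcC_{X_1,\dots,X_m;Z}$. Applying $F$ and using that $F$ is a multifunctor (hence preserves composition and identities), this maps to
\[
(1^\mcD_{FX_1},\dots,1^\mcD_{FX_m},Ff)\cdot F\ev^\mcC_{X_1,\dots,X_m;Z}.
\]
Next I would push $f$ right and then down. Applying $F$ gives $Ff$; composing with $\und{F}_{X_1,\dots,X_m;Z}$ via $\mcD(1;\und{F}_{X_1,\dots,X_m;Z})$ gives $Ff\cdot\und{F}_{X_1,\dots,X_m;Z}$; and finally $\varphi^\mcD$ turns this into
\[
(1^\mcD_{FX_1},\dots,1^\mcD_{FX_m},Ff\cdot\und{F}_{X_1,\dots,X_m;Z})\cdot\ev^\mcD_{FX_1,\dots,FX_m;FZ}.
\]

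The heart of the argument is to identify these two expressions, and the key tool is the associativity axiom of the multicategory $\mcD$. Writing each identity $1^\mcD_{FX_i}$ (for $1\le i\le m$) as the one-input composite $(1^\mcD_{FX_i})\cdot 1^\mcD_{FX_i}$ and the last entry as $(Ff)\cdot\und{F}_{X_1,\dots,X_m;Z}$, associativity lets me regroup the second expression as
\[
(1^\mcD_{FX_1},\dots,1^\mcD_{FX_m},Ff)\cdot\bigl[(1^\mcD_{FX_1},\dots,1^\mcD_{FX_m},\und{F}_{X_1,\dots,X_m;Z})\cdot\ev^\mcD_{FX_1,\dots,FX_m;FZ}\bigr].
\]
Now the bracketed composite is, by the very definition of the closing transformation $\und{F}_{X_1,\dots,X_m;Z}$, equal to $F\ev^\mcC_{X_1,\dots,X_m;Z}$. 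Substituting, the second expression becomes $(1^\mcD_{FX_1},\dots,1^\mcD_{FX_m},Ff)\cdot F\ev^\mcC_{X_1,\dots,X_m;Z}$, which is exactly the first. Hence the square commutes.

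I expect the main obstacle to be the careful application of associativity in the regrouping step: one must present the single morphism $Ff\cdot\und{F}_{X_1,\dots,X_m;Z}$ occupying the last input slot as a two-stage composite, combining its outer stage $\und{F}_{X_1,\dots,X_m;Z}$ with the $m$ trailing identities into $\ev^\mcD$, while simultaneously treating the first $m$ entries as trivial identity-composites, so that the general associativity axiom applies verbatim and the closing-transformation identity can be invoked. All remaining steps are immediate from the definitions of $\varphi^\mcC$, $\varphi^\mcD$, and the multifunctoriality of $F$.
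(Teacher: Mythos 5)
Your proof is correct and follows essentially the same route as the paper: both chase an element along the two paths, identify $(1^\mcD_{FX_1},\dots,1^\mcD_{FX_m},\und{F}_{X_1,\dots,X_m;Z})\cdot\ev^\mcD_{FX_1,\dots,FX_m;FZ}$ with $F\ev^\mcC_{X_1,\dots,X_m;Z}$ via the defining property of the closing transformation, and conclude by multifunctoriality of $F$. The only cosmetic difference is that you spell out the associativity regrouping explicitly, whereas the paper builds it into writing the top-right image directly as a three-stage composite.
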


\begin{proof}
  Pushing an arbitrary morphism
  \(g:Y_1,\dots,Y_n\to\und\mcC(X_1,\dots,X_m;Z)\) along the top-right
  path produces the composite
  \begin{align*}
    FX_1,\dots,FX_m,FY_1,\dots,FY_n
    &\rto[\hphantom{1^\mcD_{FX_1}, \dots, 1^\mcD_{FX_m},
      \und{F}_{(X_i);Z}}]{1^\mcD_{FX_1},\dots,1^\mcD_{FX_m},Fg}
    FX_1,\dots,FX_m,F\und\mcC(X_1,\dots,X_m;Z)
    \\
    &\rto{1^\mcD_{FX_1},\dots,1^\mcD_{FX_m},\und{F}_{(X_i);Z}}
    FX_1,\dots,FX_m,\mcD(FX_1,\dots,FX_m;FZ)
    \\
    &\rto[\hphantom{1^\mcD_{FX_1}, \dots, 1^\mcD_{FX_m},
      \und{F}_{(X_i);Z}}]{\ev^\mcD_{FX_1,\dots,FX_m;FZ}}FZ.
  \end{align*}
  The composition of the last two arrows is equal to
  \(F\ev^\mcC_{X_1,\dots,X_m;Z}\) by the definition of
  \(\und{F}_{X_1,\dots,X_m;Z}\). Since \(F\) preserves composition and
  identities, the above composite equals
  \[
  F\bigl((1^\mcC_{X_1},\dots,1^\mcC_{X_m},g)\cdot
  \ev^\mcC_{X_1,\dots,X_m;Z}\bigr)
  =F\bigl(\varphi_{X_1,\dots,X_m;Y_1,\dots,Y_n;Z}(g)\bigr),
  \]
  hence the assertion.
\end{proof}

Let \(F:\mcV\to\mcW\) be a multifunctor, and let \(\cc\) be a
\(\mcV\)\n-category. Then we obtain a \(\mcW\)\n-category \(F_*\cc\)
with the same set of objects if we define its \(\Hom\)\n-objects by
\((F_*\cc)(X,Y)=F\cc(X,Y)\), and composition and identities by
respectively \(\mu_{F_*\cc}=F(\mu_\cc):F\cc(X,Y),F\cc(Y,Z)\to
F\cc(X,Z)\) and \(1^{F_*\cc}_X=F(1^\cc_X):()\to F\cc(X,X)\).

\begin{proposition}[cf.~{\cite[Proposition~4.21]{BLM}}]
  \label{prop-und-F-D-functor}
  Let \(F:\mcC\to\mcD\) be a multifunctor between closed
  multicategories. There is \(\mcD\)\n-functor
  \(\und{F}:F_*\und\mcC\to\und\mcD\), \(X\mapsto FX\), such that
  \[
  \und{F}_{X;Y}:(F_*\und\mcC)(X;Y)=F\und\mcC(X;Y)\to\und\mcD(FX;FY)
  \]
  is the closing transformation, for each \(X,Y\in\Ob\mcC\).
\end{proposition}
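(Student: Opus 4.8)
The data of the claimed $\mcD$\n-functor $\und F$ are already fixed: the object assignment $X\mapsto FX$ and the action on $\Hom$\n-objects $\und F_{X;Y}\colon(F_*\und\mcC)(X;Y)=F\und\mcC(X;Y)\to\und\mcD(FX;FY)$, which is the closing transformation. What remains is to verify the two $\mcD$\n-functor axioms: preservation of identities and compatibility with composition. In both cases the strategy is to recognize each equation as an identity of morphisms with codomain an internal $\Hom$\n-object, and to prove it by applying the relevant bijection $\varphi^\mcD$ and invoking injectivity. The closing transformation was defined precisely so that $(1^\mcD_{FX},\und F_{X;Y})\cdot\ev^\mcD_{FX;FY}=F\ev^\mcC_{X;Y}$, and this identity will be the pivot of every computation.

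\textbf{Identities.} The identity of $X$ in $F_*\und\mcC$ is $F\langle1^\mcC_X\rangle$ and that of $FX$ in $\und\mcD$ is $\langle1^\mcD_{FX}\rangle$, so the axiom to be checked is $F\langle1^\mcC_X\rangle\cdot\und F_{X;X}=\langle1^\mcD_{FX}\rangle$, an equation between morphisms $()\to\und\mcD(FX;FX)$. I would deduce it directly from \propref{prop-F-D1undF-phi-phi-F} taken with $m=1$ and $n=0$, evaluated on the morphism $\langle1^\mcC_X\rangle$. Along the bottom-then-right edge this element is sent to $F\bigl(\varphi^\mcC(\langle1^\mcC_X\rangle)\bigr)=F1^\mcC_X=1^\mcD_{FX}$; along the top-then-right edge it is sent to $\varphi^\mcD\bigl(F\langle1^\mcC_X\rangle\cdot\und F_{X;X}\bigr)$. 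Commutativity of the diagram equates these, and since $\varphi^\mcD(\langle1^\mcD_{FX}\rangle)=1^\mcD_{FX}$ and $\varphi^\mcD$ is a bijection, the identity axiom follows.

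\textbf{Composition.} Here the axiom reads $(\und F_{X;Y},\und F_{Y;Z})\cdot\mu_{\und\mcD}=F\mu_{\und\mcC}\cdot\und F_{X;Z}$, an equation between morphisms $F\und\mcC(X;Y),F\und\mcC(Y;Z)\to\und\mcD(FX;FZ)$. I would prove it by applying $\varphi^\mcD$ (prepend $FX$ and compose with $\ev^\mcD_{FX;FZ}$) to both sides and showing the two images coincide with the single morphism $F\bigl((\ev^\mcC_{X;Y},1^\mcC_{\und\mcC(Y;Z)})\cdot\ev^\mcC_{Y;Z}\bigr)$. Starting from $F\mu_{\und\mcC}\cdot\und F_{X;Z}$, one rewrites $F\ev^\mcC_{X;Z}$ via the closing transformation, then uses that $F$ preserves composition and identities together with the defining equation of $\mu_{\und\mcC}$. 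Starting from $(\und F_{X;Y},\und F_{Y;Z})\cdot\mu_{\und\mcD}$, one uses the defining equation of $\mu_{\und\mcD}$ and applies the pivotal closing-transformation identity at the two evaluation junctures $\ev^\mcD_{FX;FY}$ and $\ev^\mcD_{FY;FZ}$, finally collapsing the result through multifunctoriality of $F$. The two chains meet at the common morphism above, so injectivity of $\varphi^\mcD$ yields the equation.

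\textbf{Main obstacle.} There is no conceptual difficulty beyond the pivot identity of the closing transformation; the real labor is the multicategorical bookkeeping. One must track which inputs are fed into which sub-morphism, insert and cancel the correct identity arrows $1^\mcD_{(-)}$, and reassociate composites so that the factorizations $(1,\mu)\cdot\ev$ and $(\ev,1)\cdot\ev$ line up with the defining diagrams of $\mu_{\und\mcC}$ and $\mu_{\und\mcD}$. Keeping the substitutions $(1^\mcD_{FX},\und F_{X;Y})\cdot\ev^\mcD_{FX;FY}=F\ev^\mcC_{X;Y}$ applied in the right order is the only place where an error is likely to creep in.
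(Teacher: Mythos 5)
Your proposal is correct and follows essentially the same route as the paper: both axioms are verified by applying \(\varphi^{\mcD}\) and exploiting its injectivity, with the defining equation \((1^{\mcD}_{FX},\und F_{X;Y})\cdot\ev^{\mcD}_{FX;FY}=F\ev^{\mcC}_{X;Y}\) of the closing transformation and the multifunctoriality of \(F\) doing all the work. The paper packages the composition check as a single commutative diagram (\diaref{dia-Fmu-undF-undF-undF-mu}) and verifies the identity axiom by a direct computation rather than by citing \propref{prop-F-D1undF-phi-phi-F} with \(m=1\), \(n=0\), but these are presentational differences only.
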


\begin{proof}
  First, let us check that \(\und{F}\) preserves identities. In other
  words, we must prove the equation
  \[
  \bigl[
  ()\rto{F1^{\und\mcC}_X}F\und\mcC(X;X)\rto{\und{F}_{X,X}}\und\mcD(FX;FX)
  \bigr]=1^{\und\mcD}_{FX}.
  \]
  Let us check that the left hand side solves the equation that
  determines the right hand side. We have:
  \begin{multline*}
    \bigl[ FX\rto{1^\mcC_{FX},F1^{\und\mcC}_X}FX,F\und\mcC(X;X)
    \rto{1^\mcC_{FX},\und{F}_{X,X}}FX, \und\mcD(FX;FX)\rto{\ev^\mcD}FX
    \bigr]
    \\
    =\bigl[
    FX\rto{1^\mcC_{FX},F1^{\und\mcC}_X}FX,F\und\mcC(X;X)\rto{F\ev^\mcC}FX
    \bigr]=F[(1^\mcC_{FX},1^{\und\mcC}_X)\cdot\ev^\mcC]=F1^\mcC_X=1^\mcD_{FX}.
  \end{multline*}

  To show that \(\und F\) preserves composition, we must show that the
  diagram
  \begin{equation}
    \begin{xy}
      (-25,9)*+{F\und\mcC(X;X),F\und\mcC(Y;Z)}="ff";
      (25,9)*+{F\und\mcC(X;Z)}="f";
      (-25,-9)*+{\und\mcD(FX;FY),\und\mcD(FY;FZ)}="dd";
      (25,-9)*+{\und\mcD(FX;FZ)}="d"; {\ar@{->}^-{F\mu_{\und\mcC}}
        "ff";"f"}; {\ar@{->}^-{\mu_{\und\mcD}} "dd";"d"};
      {\ar@{->}_-{\und{F}_{X,Y},\und{F}_{Y,Z}} "ff";"dd"};
      {\ar@{->}^-{\und{F}_{X,Z}} "f";"d"};
    \end{xy}
    \label{equ-underline-F-mu-mu-underline-F}
  \end{equation}
  commutes. This follows from \diaref{dia-Fmu-undF-undF-undF-mu}. The
  lower diamond is the definition of \(\mu_{\und\mcD}\). The exterior
  commutes by the definition of \(\mu_{\und\mcC}\) and because \(F\)
  preserves composition. The left upper diamond and both triangles
  commute by the definition of the closing transformation.
\end{proof}

\begin{figure}
  \[
  \begin{xy}
    (0,36)*+{FX,F\und\mcC(X;Y),F\und\mcC(Y;Z)}="1";
    (-40,18)*+{FY,F\und\mcC(Y;Z)}="2";
    (40,18)*+{FX,F\und\mcC(X;Z)}="3";
    (0,0)*+{FX,\und\mcD(FX;FY),\und\mcD(FY;FZ)}="4";
    (-40,-18)*+{FY,\und\mcD(FY;FZ)}="5";
    (40,-18)*+{FX,\und\mcD(FX;FZ)}="6"; (0,-36)*+{FZ}="7";
    {\ar@{->}_-{F\ev^\mcC_{X;Y},1} "1";"2"};
    {\ar@{->}^-{F\mu_{\und\mcC}} "1";"3"};
    {\ar@{->}|-{1,\und{F}_{X;Y},\und{F}_{Y;Z}\vphantom{\Bigl|}}
      "1";"4"}; {\ar@{->}^-{1,\und{F}_{Y;Z}} "2";"5"};
    {\ar@{->}^(.5){\hskip1.5em\ev^\mcD_{FX;FY},1} "4";"5"};
    {\ar@{->}_(.5){1,\mu_{\und\mcD}} "4";"6"};
    {\ar@{->}_-{1,\und{F}_{X,Z}} "3";"6"};
    {\ar@{->}^-{\ev^\mcD_{FY;FZ}} "5";"7"};
    {\ar@{->}_-{\ev^\mcD_{FX;FZ}} "6";"7"};
    {\ar@{->}@/_{10pc}/^(.6){F\ev^\mcC_{Y;Z}} "2";"7"};
    {\ar@{->}@/^{10pc}/_(.6){F\ev^\mcC_{X;Z}} "3";"7"}; (0,-55)*{};
  \end{xy}
  \]
  \caption{\label{dia-Fmu-undF-undF-undF-mu}}
\end{figure}

\begin{lemma}[{\cite[Lemma~4.25]{BLM}}]
  \label{lem-underlineGF-GunderlineF-underlineG}
  Let $\mcC$, $\mcD$, $\mcE$ be closed multicategories, and let \(\mcC
  \rto{F} \mcD \rto{G} \mcE\) be multifunctors. Then
  \begin{align*}
    \und{G\circ F}_{X_1,\dots,X_m;Y} = \bigl[
    GF\und\mcC(X_1,\dots,X_m;Y)
    &\rto[\hphantom{\und{G}_{FX_1,\dots,FX_m;FY}}]{G\und{F}_{X_1,\dots,X_m;Y}}
    G\und\mcD(FX_1,\dots,FX_m;FY)
    \\
    &\rto{\und{G}_{FX_1,\dots,FX_m;FY}}
    \und{\mcE}(GFX_1,\dots,GFX_m;GFY) \bigr].
  \end{align*}
\end{lemma}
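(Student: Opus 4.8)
The plan is to invoke the universal property that defines the closing transformation of the composite multifunctor $G\circ F$. By definition, $\und{G\circ F}_{X_1,\dots,X_m;Y}$ is the \emph{unique} morphism $h:GF\und\mcC(X_1,\dots,X_m;Y)\to\und\mcE(GFX_1,\dots,GFX_m;GFY)$ satisfying
\[
\bigl[GFX_1,\dots,GFX_m,GF\und\mcC(X_1,\dots,X_m;Y)\rto{1^\mcE_{GFX_1},\dots,1^\mcE_{GFX_m},h}GFX_1,\dots,GFX_m,\und\mcE(GFX_1,\dots,GFX_m;GFY)\rto{\ev^\mcE}GFY\bigr]=GF\ev^\mcC_{X_1,\dots,X_m;Y}.
\]
Since the right-hand side of the asserted identity is a morphism with the same source and target as $\und{G\circ F}_{X_1,\dots,X_m;Y}$, it suffices to check that it, too, solves this equation; equality then follows by uniqueness.

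Accordingly, I would substitute for $h$ the composite consisting of $G\und{F}_{X_1,\dots,X_m;Y}$ followed by $\und{G}_{FX_1,\dots,FX_m;FY}$, and simplify the resulting string. This composite factors through $GFX_1,\dots,GFX_m,G\und\mcD(FX_1,\dots,FX_m;FY)$, and its last two arrows — namely $1^\mcE_{GFX_1},\dots,1^\mcE_{GFX_m},\und{G}_{FX_1,\dots,FX_m;FY}$ followed by $\ev^\mcE_{GFX_1,\dots,GFX_m;GFY}$ — compose to $G\ev^\mcD_{FX_1,\dots,FX_m;FY}$, precisely by the defining property of the closing transformation of $G$ applied to the objects $FX_1,\dots,FX_m,FY$.

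It then remains to treat the first portion of the composite. Using that $G$ is a multifunctor, and in particular $1^\mcE_{GFX_i}=G1^\mcD_{FX_i}$, the morphism $(1^\mcE_{GFX_1},\dots,1^\mcE_{GFX_m},G\und{F}_{X_1,\dots,X_m;Y})\cdot G\ev^\mcD_{FX_1,\dots,FX_m;FY}$ equals $G\bigl[(1^\mcD_{FX_1},\dots,1^\mcD_{FX_m},\und{F}_{X_1,\dots,X_m;Y})\cdot\ev^\mcD_{FX_1,\dots,FX_m;FY}\bigr]$ because $G$ preserves composition and identities. By the defining property of the closing transformation of $F$, the bracketed morphism is exactly $F\ev^\mcC_{X_1,\dots,X_m;Y}$, whence the whole composite equals $GF\ev^\mcC_{X_1,\dots,X_m;Y}$, which is the equation displayed above.

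I expect no genuine obstacle: the argument is entirely a matter of the universal property together with two applications of multifunctoriality of $G$. The only point demanding care is the bookkeeping of the identity arrows — one must recognize each string $1^\mcE_{GFX_i}$ as the $G$-image $G1^\mcD_{FX_i}$ so that $G$ can be pulled outside the composite, after which the two closing-transformation definitions (that of $\und G$ at the $F$-images, then that of $\und F$) slot in directly.
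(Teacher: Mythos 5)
Your argument is correct and is essentially the paper's own proof: the paper verifies the same two sub-triangles (the defining property of $\und{G}$ at $FX_1,\dots,FX_m,FY$, and the defining property of $\und{F}$ combined with $G$ preserving composition and identities) in a single commutative diagram and then concludes by the uniqueness clause of the universal property, exactly as you do. No gaps.
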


\begin{proof}
  This follows from the commutative diagram
  \[
  \begin{xy}
    (-45,9)*+{GFX_1,\dots,GFX_m,G\und\mcD(FX_1,\dots,FX_m;FY)}="2";
    (45,9)*+{GFX_1,\dots,GFX_m,\und{\mcE}(GFX_1,\dots,GFX_m;GFY)}="3";
    (-45,-9)*+{GFX_1,\dots,GFX_m,GF\und\mcC(X_1,\dots,X_m;Y)}="1";
    (45,-9)*+{GFY}="4";
    {\ar@{->}^-{1^\mcE_{GFX_1},\dots,1^\mcE_{GFX_m},G\und{F}_{X_1,\dots,X_m;Y}} "1";"2"};
    {\ar@{->}@/^{2pc}/^-{1^\mcE_{GFX_1},\dots,1^\mcE_{GFX_m},\und{G}_{FX_1,\dots,FX_m;FY}} "2";"3"};
    {\ar@{->}^-{\ev^\mcE_{GFX_1,\dots,GFX_m;GFY}} "3";"4"};
    {\ar@{->}^(.6){GF\ev^\mcC_{X_1,\dots,X_m;Y}} "1";"4"};
    {\ar@{->}^-{G\ev^\mcD_{FX_1,\dots,FX_m;FY}} "2";"4"};
  \end{xy}
  \]
  The upper triangle is the definition of
  \(\und{G}_{FX_1,\dots,FX_m;FY}\), the lower triangle commutes by the
  definition of \(\und{F}_{X_1,\dots,X_m;Y}\) and because \(G\)
  preserves composition.
\end{proof}

\begin{proposition}[{\cite[Lemma~4.24]{BLM}}]
  \label{prop-multinatural-transformation-nuFGCD}
  Let $\nu:F\to G:\mcC\to\mcD$ be a multinatural transformation of
  multifunctors between closed multicategories. Then the diagram
  \begin{equation}
    \begin{xy}
      (-59,9)*+{F\und\mcC(X_1,\dots,X_m;Y)}="1";
      (71,9)*+{\und\mcD(FX_1,\dots,FX_m;FY)}="2";
      (-59,-9)*+{G\und\mcC(X_1,\dots,X_m;Y)}="3";
      (0,-9)*+{\und\mcD(GX_1,\dots,GX_m;GY)}="4";
      (71,-9)*+{\und\mcD(FX_1,\dots,FX_m;GY)}="5";
      {\ar@{->}^-{\und{F}_{X_1,\dots,X_m;Y}} "1";"2"};
      {\ar@{->}_-{\nu_{\und\mcC(X_1,\dots,X_m;Y)}} "1";"3"};
      {\ar@{->}^-{\und\mcD(FX_1,\dots,FX_m;\nu_Y)} "2";"5"};
      {\ar@{->}^-{\und{G}_{X_1,\dots,X_m;Y}} "3";"4"};
      {\ar@{->}^-{\und\mcD(\nu_{X_1},\dots,\nu_{X_m};GY)} "4";"5"};
    \end{xy}
    \label{eq-cl-nt}
  \end{equation}
  is commutative.
\end{proposition}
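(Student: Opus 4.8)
The plan is to compare the two legs of diagram~\eqref{eq-cl-nt} by feeding them into the evaluation morphism of the target internal \(\Hom\)\n-object. Both composites are morphisms \(F\und\mcC(X_1,\dots,X_m;Y)\to\und\mcD(FX_1,\dots,FX_m;GY)\), so by the universal property defining \(\und\mcD(FX_1,\dots,FX_m;GY)\)---that is, by the injectivity of \(\varphi^\mcD\)---it suffices to show that they agree after precomposition with \((1^\mcD_{FX_1},\dots,1^\mcD_{FX_m},-)\) and composition with \(\ev^\mcD_{FX_1,\dots,FX_m;GY}\). I would therefore reduce the claim to the equality of two morphisms
\[
FX_1,\dots,FX_m,F\und\mcC(X_1,\dots,X_m;Y)\to GY,
\]
and verify that each of them coincides with \((\nu_{X_1},\dots,\nu_{X_m},\nu_{\und\mcC(X_1,\dots,X_m;Y)})\cdot G\ev^\mcC_{X_1,\dots,X_m;Y}\).

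For the top\n-right leg I would first apply the defining diagram of \(\und\mcD(FX_1,\dots,FX_m;\nu_Y)\) to rewrite the evaluated composite with \(\ev^\mcD_{FX_1,\dots,FX_m;FY}\) followed by \(\nu_Y\) in place of the last morphism; the defining diagram of the closing transformation \(\und{F}\) then collapses the remaining part to \(F\ev^\mcC_{X_1,\dots,X_m;Y}\), so that this leg evaluates to \(F\ev^\mcC_{X_1,\dots,X_m;Y}\cdot\nu_Y\). Applying the multinaturality of \(\nu\) to the morphism \(\ev^\mcC_{X_1,\dots,X_m;Y}\) of \(\mcC\) rewrites this as \((\nu_{X_1},\dots,\nu_{X_m},\nu_{\und\mcC(X_1,\dots,X_m;Y)})\cdot G\ev^\mcC_{X_1,\dots,X_m;Y}\), which is the target expression.

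For the left\n-bottom leg I would evaluate in the same fashion. The defining diagram of \(\und\mcD(\nu_{X_1},\dots,\nu_{X_m};GY)\) transfers the \(\nu_{X_i}\) off the internal \(\Hom\) and onto the first \(m\) input slots of \(\ev^\mcD_{GX_1,\dots,GX_m;GY}\). Since \(\nu_{\und\mcC}\) and \(\und{G}\) act on the last input slot while the \(\nu_{X_i}\) act on the first \(m\), the associativity and identity axioms of \(\mcD\) let me combine these into a single composite \((\nu_{X_1},\dots,\nu_{X_m},\nu_{\und\mcC(X_1,\dots,X_m;Y)}\cdot\und{G}_{X_1,\dots,X_m;Y})\cdot\ev^\mcD_{GX_1,\dots,GX_m;GY}\) and then to factor \(\nu_{\und\mcC}\) out in front; what is left is \((1^\mcD_{GX_1},\dots,1^\mcD_{GX_m},\und{G}_{X_1,\dots,X_m;Y})\cdot\ev^\mcD_{GX_1,\dots,GX_m;GY}\), which is exactly \(G\ev^\mcC_{X_1,\dots,X_m;Y}\) by the definition of the closing transformation \(\und{G}\). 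Hence this leg evaluates to \((\nu_{X_1},\dots,\nu_{X_m},\nu_{\und\mcC(X_1,\dots,X_m;Y)})\cdot G\ev^\mcC_{X_1,\dots,X_m;Y}\) as well, and the two legs agree. The main obstacle is purely the bookkeeping in this last step: one must keep precise track of which arrows act on which input slots in order to apply the associativity and identity axioms of \(\mcD\), and correctly unfold the defining properties of both \(\und\mcD(\nu_{X_1},\dots,\nu_{X_m};GY)\) and the closing transformation; once the slots are matched up, each step is a direct substitution of a definition.
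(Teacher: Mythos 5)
Your proposal is correct and is essentially the paper's own proof: the paper establishes the claim by the commutativity of a single large diagram (\diaref{dia-undF-D1nu-nu-undG-Dnu1}) whose exterior is the multinaturality of \(\nu\) applied to \(\ev^\mcC\), and whose interior regions commute by the definitions of the closing transformations and of the induced morphisms \(\und\mcD(FX_1,\dots,FX_m;\nu_Y)\) and \(\und\mcD(\nu_{X_1},\dots,\nu_{X_m};GY)\), which is exactly your evaluation argument written as a diagram chase. The only difference is presentational: you make explicit the final appeal to the injectivity of \(\varphi^\mcD\) and the slot-bookkeeping via the interchange law, both of which the paper leaves implicit.
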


\begin{figure}
  \begin{center}
    %\resizebox{!}{.9\texthigh}{
    \rotatebox{90}{%
      \begin{xy}
        (-100,60)*+{%
          \begin{array}{c}
            FX_1,\dots,FX_m,\\[1pt]
            F\und\mcC(X_1,\dots,X_m;Y)
          \end{array}
        }="1";
        (-100,30)*+{%
          \begin{array}{c}
            FX_1,\dots,FX_m,\\[1pt]
            G\und\mcC(X_1,\dots,X_m;Y)
          \end{array}
        }="2";
        (50,30)*+{%
          \begin{array}{c}
            FX_1,\dots,FX_m,\\[1pt]
            \und\mcD(FX_1,\dots,FX_m;FY)
          \end{array}
        }="3";
        (-50,0)*+{%
          \begin{array}{c}
            FX_1,\dots,FX_m,\\[1pt]
            \und\mcD(GX_1,\dots,GX_m;GY)
          \end{array}
        }="4";
        (50,0)*+{%
          \begin{array}{c}
            FX_1,\dots,FX_m,\\[1pt]
            \und\mcD(FX_1,\dots,FX_m;GY)
          \end{array}
        }="5";
        (-100,-60)*+{%
          \begin{array}{c}
            GX_1,\dots,GX_m,\\[1pt]
            G\und\mcC(X_1,\dots,X_m;Y)
          \end{array}
        }="6";
        (100,60)*+{FY}="7";
        (-50,-30)*+{%
          \begin{array}{c}
            GX_1,\dots,GX_m,\\[1pt]
            \und\mcD(GX_1,\dots,GX_m;GY)
          \end{array}
        }="8";
        (100,-60)*+{GY}="9";
        {\ar@{->}^-{1^\mcD_{FX_1},\dots,1^\mcD_{FX_m},\nu_{\und\mcC(X_1,\dots,X_m;Y)}\hskip1em} "1";"2"};
        {\ar@{->}^-{\hskip1em 1^\mcD_{FX_1},\dots,1^\mcD_{FX_m},\und{F}} "1";"3"};
        {\ar@{->}^-{\hskip1em 1^\mcD_{FX_1},\dots,1^\mcD_{FX_m},\und{G}} "2";"4"};
        {\ar@{->}^-{1^\mcD_{FX_1},\dots,1^\mcD_{FX_m},\und\mcD(\nu_{X_1},\dots,\nu_{X_m};GY)} "4";"5"};
        {\ar@{->}_-{1^\mcD_{FX_1},\dots,1^\mcD_{FX_m},\und\mcD(FX_1,\dots,FX_m;\nu_Y)} "3";"5"};
        {\ar@{->}^-{\nu_{X_1},\dots,\nu_{X_m},1^\mcD_{G\und\mcC(X_1,\dots,X_m;Y)}} "2";"6"};
        {\ar@{->}^-{\ev^\mcD} "3";"7"};
        {\ar@{->}_-{\hskip1em 1^\mcD_{GX_1},\dots,1^\mcD_{GX_m},\und{G}} "6";"8"};
        {\ar@{->}_-{\nu_Y} "7";"9"};
        {\ar@{->}^-{\nu_{X_1},\dots,\nu_{X_m},1^\mcD_{\und\mcD(GX_1,\dots,GX_m;GY)}} "4";"8"};
        {\ar@{->}_-{\ev^\mcD} "5";"9"};
        {\ar@{->}^-{\ev^\mcD} "8";"9"};
        {\ar@{->}^-{F\ev^\mcC} "1";"7"};
        {\ar@{->}_-{G\ev^\mcC} "6";"9"};
      \end{xy}
    }
    % }
  \end{center}
  \caption{\label{dia-undF-D1nu-nu-undG-Dnu1}}
\end{figure}

\begin{proof}
  The claim follows from \diaref{dia-undF-D1nu-nu-undG-Dnu1}. Its
  exterior commutes by the multinaturality of \(\nu\). The
  quadrilateral in the middle is the definition of
  \(\und\mcD(\nu_{X_1},\dots,\nu_{X_m};GY)\). The trapezoid on the
  right is the definition of \(\und\mcD(FX_1,\dots,FX_m;\nu_Y)\). The
  triangles commute by the definition of closing transformation.
\end{proof}

\section{From closed multicategories to closed categories}
\label{sec:from-clos-mult}

A closed category comes equipped with a distinguished object
\(\unito\). We want to produce a closed category out of a closed
multicategory, so we need a notion of a closed multicategory with a
unit object. We introduce it in somewhat ad hoc fashion, which is
sufficient for our purposes though. Similarly to closedness,
possession of a unit object is a property of a closed multicategory
rather than additional data.

\begin{definition}
  Let \(\mcC\) be a closed multicategory. A \emph{unit object} of
  \(\mcC\) is an object \(\unito\in\Ob\mcC\) together with a morphism
  \(u:()\to\unito\) such that,  for each \(X\in\Ob\mcC\), the morphism
  \[
  \und\mcC(u;1):\und\mcC(\unito;X)\to\und\mcC(;X)=X
  \]
  is an isomorphism.
\end{definition}

\begin{remark}\label{rem-C1X-C0X-iso}
  If \(\unito\) is a unit object of a closed multicategory \(\mcC\),
  then \(\mcC(u;X):\mcC(\unito;X)\to\mcC(;X)\) is a bijection. This
  follows from the equation
  \[
  \bigl[\mcC(;\und\mcC(\unito;X)) \rto[\sim]{\varphi^\mcC}
  \mcC(\unito;X) \rto{\mcC(u;X)} \mcC(;X) \bigr] =
  \mcC(;\und\mcC(u;X)),
  \]
  which is an immediate consequence of the definitions. The
  bijectivity of \(\mcC(u;X)\) can be stated as the following
  universal property: for each morphism \(f:()\to X\), there exists a
  unique morphism \(\overline{f}:\unito\to X\) such that
  \(u\cdot\overline{f}=f\). In particular, a unit object, if it
  exists, is unique up to isomorphism.
\end{remark}

\begin{proposition}\label{prop-cl-multicat-cl-cat}
  A closed multicategory \(\mcC\) with a unit object gives rise to a
  closed category
  \((\cc,\linebreak[1]\und\cc(-,-)\linebreak[1],\unito,i,j,L)\),
  where:
  \begin{itemize}
  \item \(\cc\) is the underlying category of the multicategory \(\mcC\);
  \item \(\und\cc(X,Y)=\und\mcC(X;Y)\), for each \(X,Y\in\Ob\mcC\);
  \item \(\unito\) is the unit object of \(\mcC\);
  \item
    \(i_X=\bigl(\und\mcC(u;X)\bigr)^{-1}:X=\und\mcC(;X)\to\und\mcC(\unito;X)\);
  \item \(j_X=\overline{1^{\und\mcC}_X}:\unito\to\und\mcC(X;X)\) is a
    unique morphism such that
    \(\bigl[()\rto{u}\unito\rto{j_X}\und\mcC(X;X)\bigr]=1^{\und\mcC}_X\);
  \item
    \(L^X_{YZ}:\und\mcC(Y;Z)\to\und\mcC(\und\mcC(X;Y);\und\mcC(X;Z))\)
    is determined uniquely by equation \eqref{equ-LXYZ-def}.
  \end{itemize}
\end{proposition}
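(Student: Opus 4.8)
The plan is to verify directly that the listed data satisfy all the requirements of \defref{def:closed-category}. Functoriality of $\und\cc(-,-)=\und\mcC(-;-)$ on $\cc^\op\times\cc$ has already been established --- it is the content of Lemmas~\ref{lem-C1fg-C1f-C1g}--\ref{lem-Cf1-C1g-C1g-Cf1} --- so the remaining work is to check the naturality and dinaturality of $i$, $j$, $L$ and the five axioms CC1--CC5. The single technique underlying every verification is the following reduction afforded by closedness: two morphisms with common source into an internal $\Hom$-object $\und\mcC(X_1,\dots,X_m;Z)$ coincide as soon as they coincide after the bijection $\varphi^\mcC$, i.e.\ after adjoining $X_1,\dots,X_m$ to the source and composing with $\ev^\mcC$; and a morphism whose source is $\unito$ may first be precomposed with $u$, which induces a bijection on $\Hom$-sets by \remref{rem-C1X-C0X-iso}. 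Thus every identity between morphisms built from $i$, $j$, $L$ is reduced to an identity in $\mcC$ with enlarged source, which is then discharged using \lemref{lem-aux-identities}, the defining equations $u\cdot j_X=1^{\und\mcC}_X=\langle 1^\mcC_X\rangle$ for $j$ and \eqref{equ-LXYZ-def} for $L$, and the fact that $\und\mcC$ is a $\mcC$\n-category.

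I would begin with the structural checks. The transformation $\und\mcC(u;-)\colon\und\mcC(\unito;-)\to\und\mcC(;-)=\Id_\cc$ is natural by bifunctoriality of $\und\mcC(-;-)$ (the instance of \lemref{lem-Cf1-C1g-C1g-Cf1} in which one morphism is $u$), and each of its components is invertible by the definition of a unit object; hence $i=\und\mcC(u;-)^{-1}$ is a natural isomorphism. Dinaturality of $j$ amounts to $j_X\cdot\und\mcC(1;f)=j_Y\cdot\und\mcC(f;1)$ for $f\colon X\to Y$; precomposing with $u$ and using $u\cdot j=\langle 1^\mcC\rangle$, both sides become $\langle f\rangle$ by parts~(d) and~(b) of \lemref{lem-aux-identities} respectively, so they agree because precomposition with $u$ is injective. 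The naturality of $L^X_{YZ}$ in $Y,Z$ and its dinaturality in $X$ I would obtain by precomposing with $\und\mcC(X;Y)$ and applying $\ev^\mcC$: by \eqref{equ-LXYZ-def} this turns each such square into a statement about $\mu_{\und\mcC}$ together with the functoriality morphisms $\und\mcC(-;-)$, which is exactly the compatibility recorded in the proof of \propref{prop-LX-C-functor}.

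Turning to the axioms, CC5 is the cleanest: from $u\cdot j_X=\langle 1^\mcC_X\rangle$ and \lemref{lem-aux-identities}(d) one gets $u\cdot\gamma(f)=\langle f\rangle=(\varphi^\mcC)^{-1}(f)$ for every $f\colon X\to Y$, so $\gamma$ is the composite of the two bijections $(\varphi^\mcC)^{-1}$ and $\mcC(u;\und\mcC(X;Y))^{-1}$ and is itself bijective. Axioms CC1, CC2 and CC4 follow the same pattern. After precomposing with $u$ (and, for CC2 and CC4, after adjoining the appropriate object and applying $\ev^\mcC$), each reduces through \eqref{equ-LXYZ-def} to a unit law of the $\mcC$\n-category $\und\mcC$: CC1 becomes the right identity law $(1,1^{\und\mcC}_Y)\cdot\mu_{\und\mcC}=1^\mcC$, CC2 becomes the left identity law $L^X_{XY}\cdot\und\mcC(1^{\und\mcC}_X;1)=(1^{\und\mcC}_X,1)\cdot\mu_{\und\mcC}=1$, and CC4, after using $i_Z\cdot\und\mcC(u;Z)=1$, reduces to the identity $\mu_{\und\mcC}\cdot\und\mcC(u;Z)=(\und\mcC(u;Y),1)\cdot\ev^\mcC_{Y;Z}$, which is a direct consequence of the definition of $\mu_{\und\mcC}$ and of the nullary case of $\und\mcC(u;-)$.

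The real obstacle is CC3, the large pentagon relating $L^Y$, $L^X$ and $L^{\und\cc(X,Y)}$. All of its vertices are triply nested internal $\Hom$-objects, so the reduction must be applied in several stages: after adjoining $\und\mcC(X;Y)$ (and then the further objects that emerge) to the source and evaluating, \eqref{equ-LXYZ-def} rewrites every occurrence of $L$ in terms of $\mu_{\und\mcC}$, and the diagram collapses to the associativity of $\mu_{\und\mcC}$ --- precisely the exterior of the diagram used in the proof of \propref{prop-LX-C-functor}. The bookkeeping of which objects to adjoin at each stage, and the check that the various evaluation and functoriality morphisms assemble correctly, is where essentially all of the effort lies; once the expression has been brought down to the associativity of $\mu_{\und\mcC}$, it closes.
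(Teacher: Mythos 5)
Your proposal is correct and follows essentially the same route as the paper: each axiom is reduced, via the closedness bijections \(\varphi^\mcC\) and the injectivity of precomposition with \(u\) (\remref{rem-C1X-C0X-iso}), to the identity and associativity laws of the \(\mcC\)\n-category \(\und\mcC\) and to the fact that \(L^X\) is a \(\mcC\)\n-functor (\propref{prop-LX-C-functor}). The only differences are cosmetic: you also sketch the (di)naturality checks that the paper leaves as an exercise, and for CC1 and CC3 you unfold the appeal to \propref{prop-LX-C-functor} one step further, down to the identity and associativity axioms themselves.
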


We shall call \(\cc\) the \emph{underlying closed category} of
\(\mcC\). Usually we do not distinguish notationally between a closed
multicategory and its underlying closed category; this should lead to
minimal confusion.

\begin{proof}
  We leave it as an easy exercise for the reader to show the naturality of
  \(i_X\), \(j_X\), and \(L^X_{YZ}\), and proceed directly to checking
  the axioms.

  \medskip

  \noindent\textbf{CC1.} By \remref{rem-C1X-C0X-iso} the equation
  \[
  \bigl[
  \unito\rto{j_Y}\und\mcC(Y;Y)\rto{L^X_{YY}}\und\mcC(\und\mcC(X;Y);\und\mcC(X;Y))
  \bigr]=j_{\und\mcC(X;Y)}
  \]
  is equivalent to the equation
  \[
  \bigl[
  ()\rto[1^{\und\mcC}_Y]{u\cdot
    j_Y}\und\mcC(Y,Y)\rto{L^X_{YY}}\und\mcC(\und\mcC(X;Y);\und\mcC(X;Y))
  \bigr]=u\cdot j_{\und\mcC(X;Y)}=1^{\und\mcC}_{\und\mcC(X;Y)},
  \]
  which expresses the fact that the \(\mcC\)\n-functor \(L^X\)
  preserves identities.

  \medskip

  \noindent\textbf{CC2.} The equation in question
  \[
  \bigl[
  \und\mcC(X;Y)\rto{L^X_{XY}}\und\mcC(\und\mcC(X;X);\und\mcC(X;Y))
  \rto{\und\mcC(j_X;1)}\und\mcC(\unito;\und\mcC(X;Y))
  \bigr]=i_{\und\mcC(X;Y)}=(\und\mcC(u;1))^{-1}
  \]
  is equivalent to
  \[
  \bigl[
  \und\mcC(X;Y)\rto{L^X_{XY}}\und\mcC(\und\mcC(X;X);\und\mcC(X;Y))
  \rto[\und\mcC(1^{\und\mcC}_X;1)]{\und\mcC(u\cdot
    j_X;1)}\und\mcC(;\und\mcC(X;Y))=\und\mcC(X;Y)
  \bigr]=1^\mcC_{\und\mcC(X;Y)}.
  \]
  The left hand side is equal to
  \begin{multline*}
    \bigl[
    \und\mcC(X;Y)\rto{L^X_{YZ}}\und\mcC(\und\mcC(X;X);\und\mcC(X;Y))\rto{1^{\und\mcC
      } _X , 1 }
    \und\mcC(X;X),\und\mcC(\und\mcC(X;X);\und\mcC(X;Y))
    \rto{\ev^\mcC}\und\mcC(X;Y)
    \bigr]
    \quad
    \\
    \qquad
    =\bigl[
    \und\mcC(X;Y)\rto{1^{\und\mcC}_X,1}\und\mcC(X;X),\und\mcC(X;Y)\rto{1,L^X_{YZ}}
    \und\mcC(X;X),\und\mcC(\und\mcC(X;X);\und\mcC(X;Y))\rto{\ev^\mcC}\und\mcC(X;Y)
    \bigr]
    \hfill
    \\
    \qquad
    =\bigl[
    \und\mcC(X;Y)\rto{1^{\und\mcC}_X,1}\und\mcC(X;X),\und\mcC(X;Y)\rto{\mu_{\und\mcC}}\und\mcC(X;Y)
    \bigr]=1^\mcC_{\und\mcC(X;Y)}
    \hfill
  \end{multline*}
  by the identity axiom in the \(\mcC\)\n-category \(\und\mcC\).

  \medskip

  \noindent\textbf{CC3.} The commutativity of the diagram
  \[
  \begin{xy}
    (-45,18)*+{\und\mcC(U;V)}="1";
    (45,18)*+{\und\mcC(\und\mcC(Y;U);\und\mcC(Y;V))}="2";
    (-45,0)*+{\und\mcC(\und\mcC(X;U);\und\mcC(X;V))}="3";
    (45,-18)*+{\und\mcC(\und\mcC(Y;U);\und\mcC(\und\mcC(X;Y);\und\mcC(X;V)))}="4";
    (-45,-18)*+{\und\mcC(\und\mcC(\und\mcC(X;Y);\und\mcC(X;U));\und\mcC(\und\mcC(X;Y);\und\mcC(X;V)))}="5";
    {\ar@{->}^-{L^Y_{UV}} "1";"2"};
    {\ar@{->}_-{L^X_{UV}} "1";"3"};
    {\ar@{->}^-{\und\mcC(1;L^X_{YV})} "2";"4"};
    {\ar@{->}_-{L^{\und\mcC(X;Y)}_{\und\mcC(X;U),\und\mcC(X;V)}} "3";"5"};
    {\ar@{->}^-{\und\mcC(L^X_{YU};1)} "5";"4"};
  \end{xy}
  \]
  is equivalent by closedness to the commutativity of the exterior of
  \diaref{dia-mu-LX-LX-LX-mu}, which just expresses the fact that the
  \(\mcC\)\n-functor \(L^X:\und\mcC\to\und\mcC\) preserves
  composition and which is part of the assertion of \propref{prop-LX-C-functor}.
  \begin{figure}
    \begin{center}
      \rotatebox{90}{%
        \begin{xy}
          (-80,40)*+{\und\mcC(Y;U),\und\mcC(U;V)}="1";
          (80,40)*+{\und\mcC(Y;V)}="2";
          (35,20)*+{\und\mcC(Y;U),\und\mcC(\und\mcC(Y;U);\und\mcC(Y;V))}="3";
          (-45,0)*+{\und\mcC(Y;U),\und\mcC(\und\mcC(X;U);\und\mcC(X;V))}="4";
          (35,0)*+{\und\mcC(Y;U),\und\mcC(\und\mcC(Y;U);\und\mcC(\und\mcC(X;Y);\und\mcC(X;V)))}="5";
          (-5,-20)*+{\und\mcC(Y;U),\und\mcC(\und\mcC(\und\mcC(X;Y);\und\mcC(X;U));\und\mcC(\und\mcC(X;Y);\und\mcC(X;V)))}="6";
          (-5,-40)*+{\und\mcC(\und\mcC(X;Y);\und\mcC(X;U)),\und\mcC(\und\mcC(\und\mcC(X;Y);\und\mcC(X;U));\und\mcC(\und\mcC(X;Y);\und\mcC(X;V)))}="7";
          (-80,-60)*+{\und\mcC(\und\mcC(X;Y);\und\mcC(X;U)),\und\mcC(\und\mcC(X;U);\und\mcC(X;V))}="8";
          (80,-60)*+{\und\mcC(\und\mcC(X;Y);\und\mcC(X;V))}="9";
          {\ar@{->}^-{\mu_{\und\mcC}} "1";"2"};
          {\ar@{->}^-{1,L^Y_{UV}} "1";"3"};
          {\ar@{->}^-{1,L^X_{UV}} "1";"4"};
          {\ar@{->}_-{L^X_{YU},L^X_{UV}} "1";"8"};
          {\ar@{->}^-{L^X_{YV}} "2";"9"};
          {\ar@{->}^-{\ev^\mcC} "3";"2"};
          {\ar@{->}^-{1,\und\mcC(1;L^X_{YV})} "3";"5"};
          {\ar@{->}_(.4){1,L^{\und\mcC(X;Y)}_{\und\mcC(X;U),\und\mcC(X;V)}} "4";"6"};
          {\ar@{->}_-{L^X_{YU},1} "4";"8"};
          {\ar@{->}^-{\ev^\mcC} "5";"9"};
          {\ar@{->}_(.6){1,\und\mcC(L^X_{YU};1)} "6";"5"};
          {\ar@{->}^-{L^X_{YU},1} "6";"7"};
          {\ar@{->}^-{\ev^\mcC} "7";"9"};
          {\ar@{->}^-{1,L^{\und\mcC(X;Y)}_{\und\mcC(X;U),\und\mcC(X;V)}\hskip2em} "8";"7"};
          {\ar@{->}^-{\mu_{\und\mcC}} "8";"9"};
        \end{xy}
      }
    \end{center}
    \caption{\label{dia-mu-LX-LX-LX-mu}}
  \end{figure}

  \medskip

  \noindent\textbf{CC4.} The equation in question
  \[
  \bigl[
  \und\mcC(Y;Z)\rto{L^\unito}\und\mcC(\und\mcC(\unito;Y);\und\mcC(\unito;Z))
  \rto{\und\mcC(i_Y;1)}\und\mcC(Y;\und\mcC(\unito;Z))
  \bigr]=\und\mcC(1;i_Z)
  \]
  is equivalent to the equation
  \[
  \bigl[
  \und\mcC(Y;Z)\rto{L^\unito}\und\mcC(\und\mcC(\unito;Y);\und\mcC(\unito;Z))
  \rto{\und\mcC(1;\und\mcC(u;1))}\und\mcC(\und\mcC(\unito;Y);Z)
  \bigr]=\und\mcC(\und\mcC(u;1);1).
  \]
  The latter follows by closedness from the commutative diagram
  \[
  \begin{xy}
    (-50,27)*+{\und\mcC(\unito;Y),\und\mcC(Y;Z)}="1";
    (60,27)*+{\und\mcC(\unito;Y),\und\mcC(\und\mcC(\unito;Y),\und\mcC(\unito;Z))}="2";
    (20,9)*+{\und\mcC(\unito;Z)}="3";
    (60,9)*+{\und\mcC(\unito;Y),\und\mcC(\und\mcC(\unito;Y);Z)}="4";
    (-20,-9)*+{\unito,\und\mcC(\unito;Y),\und\mcC(Y;Z)}="5";
    (20,-9)*+{\unito,\und\mcC(\unito;Z)}="6";
    (-50,-27)*+{Y,\und\mcC(Y;Z)}="7";
    (60,-27)*+{Z}="8";
    {\ar@{->}^-{1,L^\unito} "1";"2"};
    {\ar@{->}^-{\mu_{\und\mcC}} "1";"3"};
    {\ar@{->}_-{\ev^{\mcC}} "2";"3"};
    {\ar@{->}^-{1,\und\mcC(1;\und\mcC(u;1))} "2";"4"};
    {\ar@{->}^-{u,1,1} "1";"5"};
    {\ar@{->}_-{\und\mcC(u;1),1} "1";"7"};
    {\ar@{->}^-{1,\mu_{\und\mcC}} "5";"6"};
    {\ar@{->}^-{\und\mcC(u;1)} "3";"8"};
    {\ar@{->}^-{\ev^{\mcC}} "4";"8"};
    {\ar@{->}_(.4){\ev^{\mcC}} "6";"8"};
    {\ar@{->}^-{u,1} "3";"6"};
    {\ar@{->}^(.4){\ev^{\mcC},1} "5";"7"};
    {\ar@{->}^-{\ev^{\mcC}} "7";"8"};
  \end{xy}
  \]
  in which the bottom quadrilateral is the definition of
  \(\mu_{\und\mcC}\), the right hand side quadrilateral is the definition
  of the morphism \(\und\mcC(1;\und\mcC(u;1))\), the top triangle is the
  definition of \(L^\unito\), and the remaining triangles commute by the
  definition of \(\und\mcC(u;1)\).

  \medskip

  \noindent\textbf{CC5.} A straightforward computation shows that the
  composite
  \[
  \mcC(X;Y)\rto{\gamma}\mcC(\unito;\und\mcC(X;Y))\rto[\sim]{\mcC(u;1)}\mcC(;\und\mcC(X;Y))
  \rto[\sim]{\varphi^\mcC}\mcC(X;Y)
  \]
  is the identity map, which readily implies that \(\gamma\) is a
  bijection.

  The proposition is proven.
\end{proof}

\begin{proposition}\label{prop-multifun-cl-fun}
  Let \(\mcC\) and \(\mcD\) be closed multicategories with unit objects.
  Let \(\cc\) and \(\cd\) denote the corresponding underlying closed
  categories. A multifunctor \(F:\mcC\to\mcD\) gives rise to a closed
  functor \(\Phi=(\phi,\hat\phi,\phi^0):\cc\to\cd\), where:
  \begin{itemize}
  \item \(\phi:\cc\to\cd\) is the underlying functor of the multifunctor
    \(F\);
  \item
    \(\hat\phi=\hat{\phi}_{X,Y}=\und{F}_{X,Y}:F\und\mcC(X;Y)\to\und\mcD(FX;FY)\)
    is the closing transformation;

  \item \(\phi^0=\overline{Fu}:\unito\to F\unito\) is a unique
    morphism such that \(\bigl[ ()\rto{u}\unito\rto{\phi^0}F\unito
    \bigr]=Fu\).
  \end{itemize}
\end{proposition}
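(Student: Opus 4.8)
The plan is to check that $\hat\phi=\und F$ is natural and that the three axioms \textbf{CF1}--\textbf{CF3} hold; the underlying functor $\phi$ is just the functor induced by $F$ on unary morphisms, so no work is needed there. Two reduction principles will be used repeatedly: to test an equality of morphisms out of $\unito$ I precompose with $u\colon()\to\unito$ and use the universal property of the unit object (\remref{rem-C1X-C0X-iso}), and to test an equality of morphisms into an internal $\Hom$\n-object I compose with the relevant evaluation and use closedness of $\mcD$. The main external input is \propref{prop-und-F-D-functor}, which says that $\und F\colon F_*\und\mcC\to\und\mcD$ is a $\mcD$\n-functor, so that $\und F$ preserves both identities and the composition $\mu$; I will also use the descriptions of $i$, $j$, $L$ in the underlying closed category from \propref{prop-cl-multicat-cl-cat}.

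Before the axioms I would record the naturality of the closing transformation: for $f\colon X\to X'$ in $\mcC$ the square $\und F_{X',Z}\cdot\und\mcD(Ff;FZ)=F\und\mcC(f;Z)\cdot\und F_{X,Z}$ commutes (together with its covariant analogue in the last variable). Each such identity is a short chase: one plugs $Ff$ into the defining relation $(1^\mcD_{FX'},\und F_{X',Z})\cdot\ev^\mcD=F\ev^\mcC$ of the closing transformation and uses that $F$ preserves composition and identities. The special case $f=u\colon()\to\unito$ gives the identity $\und F_{\unito,X}\cdot\und\mcD(Fu;FX)=F\und\mcC(u;X)$, which I set aside for \textbf{CF2}.

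For \textbf{CF1}, precomposing the asserted equation $\phi^0\cdot\phi j_X\cdot\hat\phi=j_{\phi X}$ with $u$ sends the right side to $1^{\und\mcD}_{FX}$ (definition of $j_{FX}$) and the left side to $Fu\cdot Fj_X\cdot\und F_{X,X}=F(u\cdot j_X)\cdot\und F_{X,X}=F(1^{\und\mcC}_X)\cdot\und F_{X,X}$, which equals $1^{\und\mcD}_{FX}$ since $\und F$ preserves identities; so the two sides agree. For \textbf{CF2}, I postcompose the asserted equation with the isomorphism $\und\mcD(u;FX)=i_{FX}^{-1}$. Using contravariant functoriality to rewrite $\und\mcD(\phi^0;FX)\cdot\und\mcD(u;FX)=\und\mcD(u\cdot\phi^0;FX)=\und\mcD(Fu;FX)$, then the naturality identity above, the left side becomes $Fi_X\cdot F\und\mcC(u;X)=F(i_X\cdot\und\mcC(u;X))=F(1_X)=1_{FX}$, which matches the right side $i_{FX}\cdot\und\mcD(u;FX)=1_{FX}$; since $\und\mcD(u;FX)$ is invertible, \textbf{CF2} follows.

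The hard part is \textbf{CF3}. Its two legs are morphisms $F\und\mcC(Y;Z)\to\und\mcD(F\und\mcC(X;Y);\und\mcD(FX;FZ))$, and I would apply closedness twice to reduce the hexagon to an equality of the corresponding morphisms $FX,F\und\mcC(X;Y),F\und\mcC(Y;Z)\to FZ$. Unwinding each leg then uses three ingredients: the defining equation \eqref{equ-LXYZ-def} for $L^X$, applied in both $\mcC$ (after $F$) and $\mcD$, which turns $(1,L)\cdot\ev$ into $\mu$; the defining relation of $\und F$, which turns $\ev^\mcD$ precomposed with $\und F$ into $F\ev^\mcC$; and the composition-preservation of the $\mcD$\n-functor $\und F$ from \propref{prop-und-F-D-functor}. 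Both legs should collapse to the single morphism $FX,F\und\mcC(X;Y),F\und\mcC(Y;Z)\xrightarrow{1,F\mu_{\und\mcC}}FX,F\und\mcC(X;Z)\xrightarrow{F\ev^\mcC}FZ$, establishing the axiom. I expect the bookkeeping here to be the real obstacle: it is a diagram chase of the same kind as, but larger than, \diaref{dia-mu-LX-LX-LX-mu} and \diaref{dia-Fmu-undF-undF-undF-mu}, and the care lies in performing the two closedness reductions in a compatible order so that the two reduced legs visibly match.
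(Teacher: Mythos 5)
Your proposal is correct and follows essentially the same route as the paper: CF1 is reduced by precomposition with \(u\) to the fact that \(\und F\) preserves identities, CF2 by the invertible \(\und\mcD(u;1)\) together with the defining property of the closing transformation, and CF3 by closedness to the fact that the \(\mcD\)\n-functor \(\und F\) preserves composition (\propref{prop-und-F-D-functor}). The only cosmetic differences are that you isolate the naturality of the closing transformation as a preliminary lemma, whereas the paper proves the needed instance inline in its CF2 computation, and that you apply the closedness reduction in CF3 twice rather than once.
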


\begin{proof} Let us check the axioms.

  \medskip

  \noindent\textbf{CF1.} By \remref{rem-C1X-C0X-iso} the equation
  \[
  \bigl[
  \unito\rto{\phi^0}F\unito\rto{Fj_X}F\und\mcC(X;X)\rto{\und{F}}\und\mcD(FX;FX)
  \bigr]=j_{FX}
  \]
  is equivalent to the equation
  \[
  \bigl[
  ()\rto{u}\unito\rto{\phi^0}F\unito\rto{Fj_X}F\und\mcC(X;X)\rto{\und{F}}\und\mcD(FX;FX)
  \bigr]=u\cdot j_{FX}=1^{\und\mcD}_{FX}.
  \]
  Since \(u\cdot\phi^0\cdot Fj_X=Fu\cdot Fj_X=F(u\cdot
  j_X)=F1^{\und\mcC}_X\), the above equation simply expresses the fact
  that the \(\mcD\)\n-functor \(\und{F}:F_*\und\mcC\to\und\mcD\)
  preserves identities, which is part of \propref{prop-und-F-D-functor}.

  \medskip

  \noindent\textbf{CF2.} The equation in question
  \[
  \bigl[
  FX\rto[F\und\mcC(u;1)^{-1}]{Fi_X}F\und\mcC(\unito;X)\rto{\und{F}}\und\mcD(F\unito;FX)
  \rto{\und\mcD(\phi^0;1)}\und\mcD(\unito;FX)
  \bigr]=i_{FX}=\und\mcD(u;1)^{-1}
  \]
  is equivalent to
  \begin{equation}
    \bigl[
    F\und\mcC(\unito;X)\rto{\und{F}}\und\mcD(F\unito;FX)\rto{\und\mcD(\phi^0;1)}
    \und\mcD(\unito;FX)\rto{\und\mcD(u;1)}\und\mcD(;FX)=FX
    \bigr]=F\und\mcC(u;1).
    \label{equ-axiom-CF2}
  \end{equation}
  The composition of the last two arrows is equal to
  \(\und\mcD(Fu;1)\). Hence the left hand side of the above equation
  is equal to
  \begin{align*}
    \bigl[
    F\und\mcC(\unito;X)&\rto{\und{F}}\und\mcD(F\unito;FX)\rto{\und\mcD(Fu;1)}\und\mcD(;FX)=FX
    \bigr]
    \\
    &=\bigl[
    F\und\mcC(\unito;X)\rto{\und{F}}\und\mcD(F\unito;FX)\rto{Fu,1}F\unito,\und\mcD(F\unito;FX)\rto{\ev^\mcD}FX
    \bigr]
    \\
    &=\bigl[
    F\und\mcC(\unito;X)\rto{Fu,1}F\unito,F\und\mcC(\unito;X)\rto{1,\und{F}}F\unito,\und\mcD(F\unito;FX)\rto{\ev^\mcD}FX
    \bigr]
    \\
    &=\bigl[
    F\und\mcC(\unito;X)\rto{Fu,1}F\unito,F\und\mcC(\unito;X)\rto{F\ev^\mcC}FX
    \bigr]=F((u,1)\cdot\ev^\mcC)=F\und\mcC(u;1).
  \end{align*}

  \noindent\textbf{CF3.} We must prove that the diagram
  \[
  \begin{xy}
    (-50,9)*+{F\und\mcC(Y;Z)}="1";
    (0,9)*+{F\und\mcC(\und\mcC(X;Y);\und\mcC(X;Z))}="2";
    (65,9)*+{\und\mcD(F\und\mcC(X;Y);F\und\mcC(X;Z))}="3";
    (-50,-9)*+{\und\mcD(FY;FZ)}="4";
    (0,-9)*+{\und\mcD(\und\mcD(FX;FY);\und\mcD(FX;FZ))}="5";
    (65,-9)*+{\und\mcD(F\und\mcC(X;Y);\und\mcD(FX;FZ))}="6";
    {\ar@{->}^-{FL^X} "1";"2"};     
    {\ar@{->}^-{\und{F}} "2";"3"};  
    {\ar@{->}_-{\und{F}} "1";"4"};
    {\ar@{->}^-{\und\mcD(1;\und{F})} "3";"6"}; 
    {\ar@{->}^-{L^{FX}} "4";"5"};
    {\ar@{->}^-{\und\mcD(\und{F};1)} "5";"6"};
  \end{xy}
  \]
  commutes. By closedness, this is equivalent to the commutativity of the
  exterior of \diaref{dia-Fmu-undF-undFundF-mu}, which expresses the fact
  that the \(\mcD\)\n-functor \(\und{F}:F_*\und\mcC\to\und\mcD\)
  preserves composition and which is part of \propref{prop-und-F-D-functor}.
  \begin{figure}
    \vspace{1 in}
    \begin{center}
      \rotatebox{90}{
        \begin{xy}
          (-90,54)*+{F\und\mcC(X;Y),F\und\mcC(Y;Z)}="1";
          (75,54)*+{\und\mcD(FX;FY),\und\mcD(FY;FZ)}="2";
          (-30,36)*+{F\und\mcC(X;Y),\und\mcD(FY;FZ)}="3";
          (-60,18)*+{\hskip1em F\und\mcC(X;Y),F\und\mcC(\und\mcC(X;Y);\und\mcC(X;Z))}="4";
          (0,0)*+{F\und\mcC(X;Y),\und\mcD(\und\mcD(FX;FY);\und\mcD(FX;FZ))}="5";
          (-30,-18)*+{F\und\mcC(X;Y),\und\mcD(F\und\mcC(X;Y);F\und\mcC(X;Z))}="6";
          (30,-36)*+{F\und\mcC(X;Y),\und\mcD(F\und\mcC(X;Y);\und\mcD(FX;FZ))}="7";
          (35,18)*+{\und\mcD(FX;FY),\und\mcD(\und\mcD(FX;FY);\und\mcD(FX;FZ))}="8";
          (-90,-54)*+{F\und\mcC(X;Z)}="9";
          (75,-54)*+{\und\mcD(FX;FZ)}="10";
          {\ar@{->}^-{\und{F},\und{F}} "1";"2"};
          {\ar@{->}^-{1,\und{F}} "1";"3"};
          {\ar@{->}_-{1,FL^X} "1";"4"};
          {\ar@{->}_-{F\mu_{\und\mcC}} "1";"9"};
          {\ar@{->}^-{1,L^{FX}} "2";"8"};
          {\ar@{->}^-{\mu_{\und\mcD}} "2";"10"};
          {\ar@{->}^-{\und{F},1} "3";"2"};
          {\ar@{->}_-{1,L^{FX}} "3";"5"};
          {\ar@{->}_-{1,\und{F}} "4";"6"};
          {\ar@{->}_-{F\ev^\mcC} "4";"9"};
          {\ar@{->}^-{1,\und\mcD(\und{F};1)} "5";"7"};
          {\ar@{->}_(.6){\und{F},1} "5";"8"};
          {\ar@{->}_(.4){1,\und\mcD(1;\und{F})} "6";"7"};
          {\ar@{->}_-{\ev^\mcD} "6";"9"};
          {\ar@{->}^-{\ev^\mcD} "7";"10"};
          {\ar@{->}^-{\ev^\mcD} "8";"10"};
          {\ar@{->}^-{\und{F}} "9";"10"};
        \end{xy}
      }
    \end{center}
    \caption{\label{dia-Fmu-undF-undFundF-mu}}
  \end{figure}

  The proposition is proven.
\end{proof}

\begin{proposition}\label{prop-multinat-cl-nat}
  A multinatural transformation \(t:F\to G:\mcC\to\mcD\) of
  multifunctors between closed multicategories with unit objects gives
  rise to a closed natural transformation given by the same
  components.
\end{proposition}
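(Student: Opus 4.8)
The plan is to verify that the family $t_X\in\mcD(FX;GX)$, regarded as morphisms $\phi X=FX\to GX=\psi X$ in the underlying category $\cd$, constitutes a closed natural transformation $\Phi\to\Psi$, where $\Phi=(\phi,\hat\phi,\phi^0)$ and $\Psi=(\psi,\hat\psi,\psi^0)$ are the closed functors associated to $F$ and $G$ by \propref{prop-multifun-cl-fun}. Three things must be checked: that $\eta=(t_X)$ is a natural transformation of the underlying functors $\phi\to\psi$, and that it satisfies the axioms CN1 and CN2. The guiding observation is that each of these is an instance of the multinaturality of $t$ in an appropriate arity, or of a diagram already established for closing transformations, so that essentially no new computation is required.

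First I would dispose of naturality. For a morphism $f:X\to Y$ of $\cc$, i.e.\ an element $f\in\mcC(X;Y)$, the defining equation of a multinatural transformation in the unary case ($n=1$) reads $Ff\cdot t_Y=t_X\cdot Gf$, which is exactly the naturality square for $\eta=(t_X)$ in $\cd$. Next, CN1 asks that $\phi^0\cdot t_\unito=\psi^0$ as morphisms $\unito\to G\unito$. By \remref{rem-C1X-C0X-iso} the map $\mcD(u;G\unito)$ given by precomposition with $u:()\to\unito$ is a bijection, so it suffices to prove the equality after precomposing with $u$. Using the defining properties $u\cdot\phi^0=Fu$ and $u\cdot\psi^0=Gu$, this reduces to $Fu\cdot t_\unito=Gu$, which is precisely the multinaturality of $t$ applied to the nullary morphism $u:()\to\unito$.

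Finally, CN2 demands the commutativity of the pentagon
\[
\und{F}_{X,Y}\cdot\und\mcD(1;t_Y)=t_{\und\mcC(X;Y)}\cdot\und{G}_{X,Y}\cdot\und\mcD(t_X;1),
\]
where I have used $\hat\phi=\und{F}$, $\hat\psi=\und{G}$, and $\eta=t$, together with $\und\cd=\und\mcD$ and $\und\cc(X,Y)=\und\mcC(X;Y)$. This is exactly diagram~\eqref{eq-cl-nt} of \propref{prop-multinatural-transformation-nuFGCD}, specialized to $m=1$ (with $\nu=t$): there the morphism $\und\mcD(FX_1,\dots,FX_m;\nu_Y)$ becomes $\und\mcD(1;t_Y)$ and $\und\mcD(\nu_{X_1},\dots,\nu_{X_m};GY)$ becomes $\und\mcD(t_X;1)$. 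Thus CN2 requires no fresh argument whatsoever.

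Since the genuinely computational content---the interaction of a multinatural transformation with closing transformations---was already discharged in \propref{prop-multinatural-transformation-nuFGCD}, there is no real obstacle to overcome. The only care needed is bookkeeping: correctly identifying the components $\eta_X=t_X$ and the structure morphisms $\phi^0,\psi^0,\hat\phi,\hat\psi$ coming from \propref{prop-multifun-cl-fun}, and invoking the universal property of the unit object (\remref{rem-C1X-C0X-iso}) for CN1. I would write the proof as three short verifications, citing the unary and nullary cases of multinaturality for naturality and CN1 respectively, and \propref{prop-multinatural-transformation-nuFGCD} with $m=1$ for CN2.
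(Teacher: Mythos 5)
Your proof is correct and follows essentially the same route as the paper: CN1 is reduced via precomposition with $u$ (justified by \remref{rem-C1X-C0X-iso}) to the multinaturality equation $Fu\cdot t_\unito=Gu$, and CN2 is obtained as the $m=1$ case of \propref{prop-multinatural-transformation-nuFGCD}. The only difference is that you also spell out the (trivial, unary) naturality of the underlying transformation, which the paper leaves implicit.
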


\begin{proof}
  Let
  \(\Phi=(\phi,\hat\phi,\phi^0),\Psi=(\psi,\hat\psi,\psi^0):\cc\to\cd\)
  be closed functors induced by the multifunctors \(F\) and \(G\)
  respectively. The axiom CN1 reads
  \[
  \bigl[
  \unito\rto{\phi^0}F\unito\rto{t_\unito}G\unito
  \bigr]=\psi^0.
  \]
  It is equivalent to the equation
  \[
  \bigl[
  ()\rto{u}\unito\rto{\phi^0}F\unito\rto{t_\unito}G\unito
  \bigr]=u\cdot\psi^0,
  \]
  i.e., to the equation \(Fu\cdot t_\unito=Gu\), which is a consequence
  of the multinaturality of \(t\). The axiom CN2 is a particular case of
  \propref{prop-multinatural-transformation-nuFGCD}.
\end{proof}

Let \(\ClMulticat^u\) denote the full 2\n-subcategory of
\(\ClMulticat\) whose objects are closed multicategories with a unit
object. Note that a 2\n-category is the same thing as a
\(\Cat\)\n-category. Thus we can speak about \(\Cat\)\n-functors
between 2\n-categories. These are sometimes called strict
2\n-functors; they preserve composition of 1-morphisms and
identity 1-morphisms on the nose.

\begin{proposition}\label{prop-Cat-fun-cl-multi-cl-cat}
  Propositions~\ref{prop-cl-multicat-cl-cat},
  \ref{prop-multifun-cl-fun}, and \ref{prop-multinat-cl-nat} define a
  \(\Cat\)\n-functor \(U:\ClMulticat^u\to\ClCat\).
\end{proposition}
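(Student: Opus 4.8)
The plan is to assemble the three preceding propositions into a single $\Cat$-functor by specifying $U$ on objects and on hom-categories, and then verifying the two strictness axioms (preservation of identity $1$-cells and of composition of $1$-cells). On objects $U$ acts by \propref{prop-cl-multicat-cl-cat}; for each pair $\mcC,\mcD$ of closed multicategories with unit objects, \propref{prop-multifun-cl-fun} supplies the action of $U$ on the objects of the hom-category $\ClMulticat^u(\mcC,\mcD)$ (multifunctors $\mapsto$ closed functors) and \propref{prop-multinat-cl-nat} supplies its action on the morphisms (multinatural transformations $\mapsto$ closed natural transformations with the \emph{same} components). The first thing I would check is that this is indeed a functor $\ClMulticat^u(\mcC,\mcD)\to\ClCat(\cc,\cd)$: because the components are left unchanged and both vertical composition and identities of (closed) natural transformations are computed componentwise, functoriality is inherited from the evident fact that passing to underlying ordinary natural transformations is functorial.

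Next I would verify preservation of identity $1$-cells, i.e.\ that the closed functor attached to $\Id_\mcC$ is $\Id_\cc$. The underlying functor is plainly the identity. The closing transformation $\und{\Id_\mcC}_{X;Y}$ is forced to equal $1_{\und\mcC(X;Y)}$, since this identity morphism solves the equation defining the closing transformation, so uniqueness applies. And $\phi^0=\overline{u}=1_\unito$ by the uniqueness clause of \remref{rem-C1X-C0X-iso}, as $u\cdot 1_\unito=u$.

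For preservation of composition, given $\mcC\rto{F}\mcD\rto{G}\mcE$ in $\ClMulticat^u$, I would compare $U(G\circ F)$ with the composite $U(G)\circ U(F)$ of closed functors defined in the excerpt. The underlying functors agree because passage to underlying functors is functorial. The closing transformations agree on the nose by \lemref{lem-underlineGF-GunderlineF-underlineG}, which identifies $\und{G\circ F}$ with $\und{G}\circ G\und{F}$ --- precisely the recipe for $\hat\chi$. For the unit morphisms I would invoke \remref{rem-C1X-C0X-iso}: writing $\psi^0=\overline{Gu}$ and $\phi^0=\overline{Fu}$, one has $u\cdot(\psi^0\cdot G\phi^0)=Gu\cdot G\phi^0=G(u\cdot\phi^0)=G(Fu)=GFu$, so by uniqueness $\psi^0\cdot G\phi^0=\overline{GFu}=\chi^0$, matching the composite.

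Finally, the composition axiom of a $\Cat$-functor is the commutativity of a square of functors, so beyond its restriction to $1$-cells (just verified) it must hold on $2$-cells, i.e.\ on horizontal composites of multinatural transformations. Since $U$ preserves components and horizontal composition of natural transformations is determined by the underlying functors together with these components, this reduces to the corresponding statement for underlying ordinary natural transformations combined with the $1$-cell compatibility already established; no fresh check of the closed-transformation axioms CN1--CN2 is required, as these hold by \propref{prop-multinat-cl-nat}. The only genuinely substantive input is \lemref{lem-underlineGF-GunderlineF-underlineG}; everything else is bookkeeping with the universal property of the unit object, so the sole place demanding care is keeping straight the defining universal properties of the closing transformation and of the operation $\overline{(\,\cdot\,)}$.
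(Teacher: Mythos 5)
Your proposal is correct and follows essentially the same route as the paper's proof, which likewise dismisses the 2-cell and identity checks as immediate and isolates \lemref{lem-underlineGF-GunderlineF-underlineG} as the only substantive ingredient for composition of 1-morphisms. Your additional verifications of the unit morphisms via the uniqueness clause of \remref{rem-C1X-C0X-iso} are accurate fillings-in of what the paper leaves implicit.
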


\begin{proof}
  It is obvious that composition of 2-morphisms and identity
  2-morphisms are preserved. It is also clear that the identity
  multifunctor induces the closed identity functor. Finally,
  composition of 1-morphisms is preserved by
  \lemref{lem-underlineGF-GunderlineF-underlineG}.
\end{proof}

\section{From closed categories to closed multicategories}

In this section we prove our main result.

\begin{theorem}\label{thm-equiv}
  The \(\Cat\)\n-functor \(U:\ClMulticat^u\to\ClCat\) is a
  \(\Cat\)\n-equivalence.
\end{theorem}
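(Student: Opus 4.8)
The plan is to prove that $U$ is a $\Cat$\n-equivalence by establishing the two properties that characterise such a thing: $U$ is $\Cat$\n-fully faithful and essentially surjective on objects. Concretely, I must show (1) that every closed category $\cv$ is isomorphic, as a closed category, to the underlying closed category $U\mcC$ of some closed multicategory $\mcC$ with a unit object, and (2) that for each pair $\mcC,\mcD$ of closed multicategories with unit objects the functor
\[
U_{\mcC,\mcD}:\ClMulticat^u(\mcC,\mcD)\to\ClCat(\cc,\cd)
\]
on hom-categories is an isomorphism of categories. The device I will use repeatedly is that in a closed multicategory every $n$\n-ary morphism $X_1,\dots,X_n\to Y$ is carried bijectively by the maps $\varphi^\mcC$ to a nullary or unary morphism into an iterated internal $\Hom$\n-object; this reduces assertions about arbitrary arities to arities $0$ and $1$, where the closed-category data live.

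For essential surjectivity, given a closed category $\cv$ I first invoke \thmref{thm-cl-cat-iso-EK} to assume $\cv$ is a closed category in the sense of Eilenberg and Kelly, so that a forgetful functor $V=\cv(\unito,-):\cv\to\Set$ and the representation theorem \propref{prop-repr} are available. I then build a multicategory $\mcC$ with $\Ob\mcC=\Ob\cv$ by setting
\[
\mcC(X_1,\dots,X_n;Y)=V\und\cv\bigl(X_n,\und\cv(X_{n-1},\dots,\und\cv(X_1,Y)\cdots)\bigr),
\]
so that in particular $\mcC(X;Y)=\cv(\unito,\und\cv(X,Y))$ and $\mcC(;Y)=\cv(\unito,Y)$. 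Composition is defined from the $\cv$\n-functors $L^X$ of \exaref{ex-V-LX} and the $\cv$\n-natural transformations $L^f$ of \exaref{exa-Lf} and is pinned down by the representation theorem, with identities given by the $j_X$. The substance of this step is the verification of the associativity and identity axioms of a multicategory; afterwards I check that $\mcC$ is closed with $\und\mcC(X;Y)=\und\cv(X,Y)$, that $\unito$ together with the morphism $u:()\to\unito$ corresponding to $1_\unito$ is a unit object, and that the underlying category of $\mcC$ is $E_*\und\cv$. By \propref{prop-gamma-iso-V-EundV} the map $\gamma$ is an isomorphism $\cv\cong E_*\und\cv$, and a check on the transported data $i,j,L$ upgrades it to an isomorphism of closed categories $\cv\cong U\mcC$.

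For $\Cat$\n-fully faithfulness I treat objects and morphisms of the hom-categories separately. On objects I must show that every closed functor $\Phi=(\phi,\hat\phi,\phi^0):\cc\to\cd$ equals $U(F)$ for a unique multifunctor $F$. Uniqueness holds because $F$ agrees with $\phi$ on objects and unary morphisms, its closing transformation equals $\hat\phi$ and $\overline{Fu}=\phi^0$, and $F$ commutes with $\varphi$ by \propref{prop-F-D1undF-phi-phi-F}; since every $n$\n-ary morphism is $\varphi$\n-equivalent to one of lower arity, $Ff$ is forced. For existence I define $F$ on $n$\n-ary morphisms by induction on arity using $\varphi^\mcD$, $\phi$, $\hat\phi$, and $\phi^0$, and verify that $F$ preserves composition and identities; here the axioms CF1--CF3 are exactly what is needed, and $U(F)=\Phi$ by \propref{prop-multifun-cl-fun}. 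On morphisms I must show that multinatural transformations $F\to G$ coincide, under ``same components'', with closed natural transformations $\Phi\to\Psi$. One direction is \propref{prop-multinat-cl-nat}; conversely a closed natural transformation is in particular a natural transformation of underlying functors, giving multinaturality for unary morphisms, and the general multinaturality equation then follows by reducing along $\varphi$ to the unary and nullary cases, the latter supplied by the axiom CN1. Thus $U_{\mcC,\mcD}$ is bijective on objects and on morphisms, hence an isomorphism of categories.

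The main obstacle will be step (1): defining the composition of multimorphisms of all arities purely from the closed-category data and proving it associative and unital. This is where the Eilenberg--Kelly machinery---the self-enrichment $\und\cv$, the $\cv$\n-functors $L^X$, and above all the representation theorem \propref{prop-repr}---must be deployed, and it is precisely what forces the passage to the Eilenberg--Kelly form of a closed category via \thmref{thm-cl-cat-iso-EK}. By comparison the fully-faithful part, though lengthy, is a systematic unwinding of the closed-multicategory structure via the arity-reduction device together with the already-established \propref{prop-F-D1undF-phi-phi-F}, \propref{prop-multinatural-transformation-nuFGCD}, and \lemref{lem-underlineGF-GunderlineF-underlineG}.
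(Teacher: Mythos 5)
Your overall decomposition (essential surjectivity plus bijectivity on 1\n-morphisms and 2\n-morphisms, with the arity-reduction device via $\varphi$) is exactly the paper's, and your treatment of the fully-faithful half coincides with the paper's in all essentials: the inductive construction of $F$ from $\Phi$ via $\varphi^\mcD$, $\hat\phi$ and $\phi^0$, uniqueness of $F$ via \propref{prop-F-D1undF-phi-phi-F}, and the two-way identification of closed natural transformations with multinatural ones, one direction being \propref{prop-multinat-cl-nat} and the other following by reduction to arities $0$ and $1$.

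The divergence, and the one real weakness, is in essential surjectivity. You define the hom-sets directly as iterated internal $\Hom$s, $\mcC(X_1,\dots,X_n;Y)=V\und\cv(X_n,\und\cv(X_{n-1},\dots,\und\cv(X_1,Y)\cdots))$, and must then construct composition on these sets and verify the associativity and identity axioms by hand; you flag this as ``the main obstacle'', but the proposal contains no argument for it, and a direct verification from CC1--CC5 is genuinely painful. The paper's construction (following Laplaza) sidesteps this entirely: it takes $\mcV$ to be the full submulticategory, on the objects $L^X$, of the multicategory $\wh{\VCat(\und\cv,\und\cv)^\op}$ associated to the \emph{strict monoidal} category of $\cv$\n-functors $\und\cv\to\und\cv$ under composition, so that $\mcV(X_1,\dots,X_n;Y)=\VCat(\und\cv,\und\cv)(L^Y,L^{X_n}\circ\dots\circ L^{X_1})$ and associativity and identities are inherited for free; the representation theorem $\Gamma$ of \propref{prop-repr} then identifies these hom-sets with yours, and the remaining work is to check closedness, exhibit the unit object via $u=i^{-1}$, and show the underlying closed category is isomorphic to $\cv$ (the paper does this directly via $X\mapsto X$, $f\mapsto L^f$ rather than via $\gamma:\cv\cong E_*\und\cv$ as you propose --- both work). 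If you transport composition across $\Gamma$ from the natural-transformation picture, your plan becomes the paper's proof in different clothing; as written, with composition merely said to be ``pinned down by the representation theorem'' but never exhibited, the central verification is missing.
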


We have to prove that \(U\) is bijective on 1-morphisms and
2-morphisms, and that it is essentially surjective; the latter means
that for each closed category \(\cv\) there is a closed multicategory
with a unit object such that its underlying closed category is
isomorphic (as a closed category) to \(\cv\).

\subsection{The surjectivity of \(U\) on 1-morphisms.}
Let \(\mcC\) and \(\mcD\) be closed multicategories with
unit objects.  Denote their underlying closed categories by the same
symbols. Let \(\Phi=(\phi,\hat\phi,\phi^0):\mcC\to\mcD\) be a closed
functor. We are going to define a multifunctor \(F:\mcC\to\mcD\) whose
underlying closed functor is \(\Phi\). Define \(FX=\phi X\), for each
\(X\in\Ob\mcC\). For each \(Y\in\Ob\mcC\), the map
\(F_{;Y}:\mcC(;Y)\to\mcD(;\phi Y)\) is defined via the diagram
\[
\begin{xy}
  (-30,9)*+{\mcC(;Y)}="1";
  (30,9)*+{\mcD(;\phi Y)}="2";
  (-30,-9)*+{\mcC(\unito;Y)}="3";
  (0,-9)*+{\mcD(\phi\unito;\phi Y)}="4";
  (30,-9)*+{\mcD(\unito;\phi Y)}="5";
  {\ar@{->}^-{F_{;Y}} "1";"2"};
  {\ar@{->}^-{\mcC(u;1)}_-\wr "3";"1"};
  {\ar@{->}_-{\mcD(u;1)}^-\wr "5";"2"};
  {\ar@{->}^-\phi "3";"4"};
  {\ar@{->}^-{\mcD(\phi^0;1)} "4";"5"};
\end{xy}
\]
Recall that for a morphism \(f:()\to Y\) we denote by
\(\overline{f}:\unito\to Y\) a unique morphism such that
\(u\cdot\overline{f}=f\).  Then the commutativity in the above diagram
means that
\begin{equation}
  Ff=\bigl[
  ()\rto{u}\unito\rto{\phi^0}\phi\unito\rto{\phi(\overline{f})}\phi Y
  \bigr],
  \label{equ-def-F-empty-source}
\end{equation}
for each \(f:()\to Y\). For \(n\ge1\) and
\(X_1,\dots,X_n,Y\in\Ob\mcC\), the map
\[
F_{X_1,\dots,X_n;Y}:\mcC(X_1,\dots,X_n;Y)\to\mcD(\phi X_1,\dots,\phi X_n;\phi Y)
\]
is defined inductively by requesting the commutativity in the diagram
\begin{equation}
  \begin{xy}
    (-40,18)*+{\mcC(X_2,\dots,X_n;\und\mcC(X_1;Y))}="1";
    (40,18)*+{\mcD(\phi X_2,\dots,\phi X_n;\phi\und\mcC(X_1;Y))}="2";
    (40,0)*+{\mcD(\phi X_2,\dots,\phi X_n;\und\mcD(\phi X_1;\phi Y))}="3";
    (-40,-18)*+{\mcC(X_1,\dots,X_n;Y)}="4";
    (40,-18)*+{\mcD(\phi X_1,\dots,\phi X_n;\phi Y)}="5";
    {\ar@{->}^-{F_{X_2,\dots,X_n;\und\mcC(X_1;Y)}} "1";"2"};
    {\ar@{->}_-{\varphi^\mcC}^-\wr "1";"4"};
    {\ar@{->}^-{\mcD(1;\hat\phi)} "2";"3"};
    {\ar@{->}^-{\varphi^\mcD}_-\wr "3";"5"};
    {\ar@{->}^-{F_{X_1,\dots,X_n;Y}} "4";"5"};
  \end{xy}
  \label{dia-def-F}
\end{equation}

\begin{lemma}\label{lem-claim-1}
  The following diagram commutes
  \[
  \begin{xy}
    (-40,9)*+{\mcC(;\und\mcC(X;Y))}="1";
    (0,9)*+{\mcD(;\phi\und\mcC(X;Y))}="2";
    (40,9)*+{\mcD(;\und\mcD(\phi X;\phi Y))}="3";
    (-40,-9)*+{\mcC(X;Y)}="4";
    (40,-9)*+{\mcD(\phi X;\phi Y)}="5";
    {\ar@{->}^-{F_{;\und\mcC(X;Y)}} "1";"2"};
    {\ar@{->}^-{\mcD(;\hat\phi)} "2";"3"};
    {\ar@{->}_-{\varphi^\mcC}^-\wr "1";"4"};
    {\ar@{->}^-{\varphi^\mcD}_-\wr "3";"5"};
    {\ar@{->}^-\phi "4";"5"};
  \end{xy}
  \]
  In particular, \(F_{X;Y}=\phi_{X,Y}:\mcC(X;Y)\to\mcD(\phi X;\phi Y)\).
\end{lemma}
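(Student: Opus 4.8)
The plan is to reduce the commutativity of the square to a single compatibility relation between the bijection $\gamma$ of CC5, the functor $\phi$, and the closed-functor data $\hat\phi$, $\phi^0$. Writing $\gamma^\mcC\colon\mcC(X;Y)\to\mcC(\unito;\und\mcC(X;Y))$ for the CC5-bijection of the underlying closed category of $\mcC$ and $\gamma^\mcD$ for its counterpart for $\mcD$, the key identity I would first establish is
\begin{equation*}
  \gamma^\mcD(\phi f)=\bigl[\unito\rto{\phi^0}\phi\unito\rto{\phi(\gamma^\mcC f)}\phi\und\mcC(X;Y)\rto{\hat\phi}\und\mcD(\phi X;\phi Y)\bigr],\qquad f\in\mcC(X;Y).
\end{equation*}
To get it, I would expand $\gamma^\mcD(\phi f)=j_{\phi X}\cdot\und\mcD(1;\phi f)$, substitute for $j_{\phi X}$ using axiom~CF1 (so $j_{\phi X}=\phi^0\cdot\phi(j_X)\cdot\hat\phi$), and then push $\und\mcD(1;\phi f)$ leftward past $\hat\phi$ by the naturality of $\hat\phi$ in its second argument. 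The resulting composite collapses to $\phi^0\cdot\phi\bigl(j_X\cdot\und\mcC(1;f)\bigr)\cdot\hat\phi=\phi^0\cdot\phi(\gamma^\mcC f)\cdot\hat\phi$ by functoriality of $\phi$ and the definition of $\gamma^\mcC$.

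The second ingredient is the identity $\gamma^\mcC\cdot\mcC(u;1)\cdot\varphi^\mcC=\id$ (and its $\mcD$-analogue) established while verifying axiom~CC5 in \propref{prop-cl-multicat-cl-cat}. Given $g\in\mcC(;\und\mcC(X;Y))$, I would set $f=\varphi^\mcC(g)$; then $u\cdot\gamma^\mcC(f)=\mcC(u;1)(\gamma^\mcC f)=g$, so by the uniqueness clause of \remref{rem-C1X-C0X-iso} one has $\overline{g}=\gamma^\mcC(f)$. Tracing $g$ along the top-right path, \eqref{equ-def-F-empty-source} gives $F_{;\und\mcC(X;Y)}(g)=u\cdot\phi^0\cdot\phi(\overline g)=u\cdot\phi^0\cdot\phi(\gamma^\mcC f)$; postcomposing with $\hat\phi$ (the map $\mcD(;\hat\phi)$) produces $u\cdot\bigl(\phi^0\cdot\phi(\gamma^\mcC f)\cdot\hat\phi\bigr)$, which by the displayed identity is $u\cdot\gamma^\mcD(\phi f)$. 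Applying $\varphi^\mcD$ and using the $\mcD$-version of the CC5-identity, $\varphi^\mcD\bigl(u\cdot\gamma^\mcD(\phi f)\bigr)=\phi f$, so the top-right path sends $g$ to $\phi f=\phi(\varphi^\mcC(g))$, which is precisely its image under the bottom-left path. This proves commutativity of the square.

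For the final assertion I would compare the square just proved with the $n=1$ instance of the defining diagram~\eqref{dia-def-F}: both have the same top-right composite $F_{;\und\mcC(X;Y)}\cdot\mcD(;\hat\phi)\cdot\varphi^\mcD$ and the same left edge $\varphi^\mcC$, but carry $\phi$ and $F_{X;Y}$ respectively along the bottom. Hence $\varphi^\mcC\cdot\phi=\varphi^\mcC\cdot F_{X;Y}$, and since $\varphi^\mcC$ is a bijection this forces $F_{X;Y}=\phi$. The only genuine computation is the displayed identity; everything after that is bookkeeping with the unit-object bijections. The point to watch is the translation $\overline{g}=\gamma^\mcC(f)$, i.e.\ correctly matching the multicategorical description of $F_{;-}$ in \eqref{equ-def-F-empty-source} with the closed-categorical map $\gamma^\mcC$, since it is this identification that lets the two independently-defined maps $\varphi^\mcC$ and $\gamma^\mcC$ be compared at all.
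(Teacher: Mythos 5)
Your proof is correct and follows essentially the same route as the paper: the displayed identity $\gamma^\mcD(\phi f)=\phi^0\cdot\phi(\gamma^\mcC f)\cdot\hat\phi$, obtained from axiom CF1 and the naturality of $\hat\phi$, is exactly the ``bottom hexagon'' of the paper's diagram, and the rest of your argument (the CC5 identity, formula \eqref{equ-def-F-empty-source}, and the comparison with the $n=1$ case of \eqref{dia-def-F}) is the paper's diagram chase written out element-wise.
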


\begin{proof}
  Equivalently, the exterior of the diagram
  \[
  \begin{xy}
    (-40,18)*+{\mcC(;\und\mcC(X;Y))}="1";
    (40,18)*+{\mcD(;\phi\und\mcC(X;Y))}="2";
    (80,18)*+{\mcD(;\und\mcD(\phi X;\phi Y))}="3";
    (-40,0)*+{\mcC(\unito;\und\mcC(X;Y))}="4";
    (-5,0)*+{\mcD(\phi\unito;\phi\und\mcC(X;Y))}="5";
    (40,0)*+{\mcD(\unito;\phi\und\mcC(X;Y))}="6";
    (80,0)*+{\mcD(\unito;\und\mcD(\phi X;\phi Y))}="7";
    (-40,-18)*+{\mcC(X;Y)}="8";
    (80,-18)*+{\mcD(\phi X;\phi Y)}="9";
    {\ar@{->}^-{F_{;\und\mcC(X;Y)}} "1";"2"};
    {\ar@{->}^-{\mcD(;\hat\phi)} "2";"3"};
    {\ar@{->}_-{\mcC(u;1)}^-\wr "4";"1"};
    {\ar@{->}^-{\mcD(u;1)}_-\wr "6";"2"};
    {\ar@{->}^-{\mcD(u;1)}_-\wr "7";"3"};
    {\ar@{->}^-\phi "4";"5"};
    {\ar@{->}^-{\mcD(\phi^0;1)} "5";"6"};
    {\ar@{->}^-{\mcD(1;\hat\phi)} "6";"7"};
    {\ar@{->}_-\gamma "8";"4"};
    {\ar@{->}^-\gamma "9";"7"};
    {\ar@{->}^-\phi "8";"9"};
    {\ar@{->}@/^{4pc}/_(.3){(\varphi^\mcC)^{-1}} "8";"1"};
    {\ar@{->}@/_{4pc}/^(.3){(\varphi^\mcD)^{-1}} "9";"3"};
  \end{xy}
  \]
  commutes. The upper pentagon is the definition of
  \(F_{;\und\mcC(X;Y)}\). The bottom hexagon commutes. Indeed, taking
  \(f\in\mcC(X;Y)\) and tracing it along the left-top path yields
  \begin{alignat*}{3}
    \phi^0\cdot\phi(j_X)\cdot\phi\und\mcC(1;f)\cdot\hat\phi&=
    \phi^0\cdot\phi(j_X)\cdot\hat\phi\cdot\mcD(1;\phi(f))&\quad&\textup{(naturality of \(\hat\phi\))}
    \\
    &=j_{\phi X}\cdot\und\mcD(1;\phi(f)),&\quad&\textup{(axiom CF1)}
  \end{alignat*}
  which is precisely the image of \(f\) along the bottom-right path.
\end{proof}

\begin{lemma}\label{lem-claim-2}
  For each \(f:()\to Y\) and \(Z\in\Ob\mcC\), the diagram
  \[
  \begin{xy}
    (-20,9)*+{\phi\und\mcC(Y;Z)}="1";
    (20,9)*+{\phi\und\mcC(;Z)}="2";
    (35,9)*+{\phi Z}="3";
    (-20,-9)*+{\und\mcD(\phi Y;\phi Z)}="4";
    (20,-9)*+{\und\mcD(;\phi Z)}="5";
    (35,-9)*+{\phi Z}="6";
    {\ar@{->}^-{\phi\und\mcC(f;1)} "1";"2"};
    {\ar@{=} "2";"3"};
    {\ar@{->}_-{\hat\phi} "1";"4"};
    {\ar@{=} "3";"6"};
    {\ar@{->}^-{\und\mcD(Ff;1)} "4";"5"};
    {\ar@{=} "5";"6"};
  \end{xy}
  \]
  commutes.
\end{lemma}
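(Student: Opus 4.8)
Since $\und\mcC(;Z)=Z$ and $\und\mcD(;\phi Z)=\phi Z$, the two equal signs and the right‑hand edge of the square are identities, so the assertion is exactly the equation
\[
\phi\und\mcC(f;Z)=\bigl[\phi\und\mcC(Y;Z)\rto{\hat\phi_{Y,Z}}\und\mcD(\phi Y;\phi Z)\rto{\und\mcD(Ff;\phi Z)}\phi Z\bigr].
\]
The plan is to rewrite both sides as composites of morphisms in the underlying closed category of $\mcD$ and then compare them using the naturality of $\hat\phi$ and the closed‑functor axiom CF2.

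First I would factor the two internal‑hom morphisms through the unit objects. Let $\overline f:\unito\to Y$ be the unique morphism with $u\cdot\overline f=f$. By the definition of $i_Z$ in \propref{prop-cl-multicat-cl-cat} we have $\und\mcC(u;Z)=i_Z^{-1}$, and a direct computation with the associativity of composition in $\mcC$ and the defining equations of $\und\mcC(\overline f;Z)$ and $\und\mcC(u;Z)$ gives
\[
\und\mcC(f;Z)=\und\mcC(\overline f;Z)\cdot\und\mcC(u;Z)=\und\mcC(\overline f;Z)\cdot i_Z^{-1}.
\]
On the other side, \eqref{equ-def-F-empty-source} says $Ff=u\cdot\phi^0\cdot\phi\overline f$, whence $\overline{Ff}=\phi^0\cdot\phi\overline f$; applying the same factorization in $\mcD$ together with the contravariant functoriality of $\und\mcD(-;Z)$ yields
\[
\und\mcD(Ff;\phi Z)=\und\mcD(\overline{Ff};\phi Z)\cdot i_{\phi Z}^{-1}=\und\mcD(\phi\overline f;\phi Z)\cdot\und\mcD(\phi^0;1)\cdot i_{\phi Z}^{-1}.
\]

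It then remains to identify $\phi\und\mcC(f;Z)=\phi\und\mcC(\overline f;Z)\cdot\phi(i_Z^{-1})$ with the right‑hand side $\hat\phi_{Y,Z}\cdot\und\mcD(\phi\overline f;\phi Z)\cdot\und\mcD(\phi^0;1)\cdot i_{\phi Z}^{-1}$ computed above. Axiom CF2 for $\Phi$ at $Z$ reads $\phi i_Z\cdot\hat\phi_{\unito,Z}\cdot\und\mcD(\phi^0;1)=i_{\phi Z}$; since $\phi i_Z$ and $i_{\phi Z}$ are isomorphisms this can be solved for $\phi(i_Z^{-1})=\hat\phi_{\unito,Z}\cdot\und\mcD(\phi^0;1)\cdot i_{\phi Z}^{-1}$. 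Substituting this into the left‑hand side and then invoking the naturality of $\hat\phi$ in its contravariant argument, namely $\phi\und\mcC(\overline f;Z)\cdot\hat\phi_{\unito,Z}=\hat\phi_{Y,Z}\cdot\und\mcD(\phi\overline f;\phi Z)$, turns the left‑hand side into $\hat\phi_{Y,Z}\cdot\und\mcD(\phi\overline f;\phi Z)\cdot\und\mcD(\phi^0;1)\cdot i_{\phi Z}^{-1}$, which is exactly the right‑hand side.

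The one point requiring care is the first factorization of $\und\mcC(f;Z)$: because $f:()\to Y$ has empty source, and because at this stage $\phi$ is merely an ordinary functor (we have not yet verified that $F$ is a multifunctor), I cannot simply cite the contravariant functoriality of $\und\mcC(-;Z)$ for the composite $u\cdot\overline f$. I expect to establish it by hand, unwinding $\und\mcC(f;Z)=(f,1)\cdot\ev^\mcC_{Y;Z}$, using $f=u\cdot\overline f$ and associativity to isolate $(\overline f,1)\cdot\ev^\mcC_{Y;Z}=(1_\unito,\und\mcC(\overline f;Z))\cdot\ev^\mcC_{\unito;Z}$, and then reassociating to recognize $\und\mcC(u;Z)$; this is the empty‑source analogue of the functoriality computation and can be carried out in the same way using \lemref{lem-aux-identities}(a). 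Once this factorization is in place, everything else is formal manipulation in $\mcD$.
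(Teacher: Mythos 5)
Your proof is correct and follows essentially the same route as the paper: factor \(f=u\cdot\overline{f}\), handle the \(\overline{f}\)-part by naturality of \(\hat\phi\), and handle the \(u\)-part by axiom CF2 (your rearrangement of CF2 into \(\phi(i_Z^{-1})=\hat\phi\cdot\und\mcD(\phi^0;1)\cdot i_{\phi Z}^{-1}\) is exactly the paper's equation~\eqref{equ-axiom-CF2}). The extra care you take over the factorization \(\und\mcC(u\cdot\overline{f};Z)=\und\mcC(\overline{f};Z)\cdot\und\mcC(u;Z)\) for an empty-source morphism is a point the paper leaves implicit, and your sketch of how to verify it is sound.
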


\begin{proof}
  By definition,
  \[
  Ff=\bigl[
  ()\rto{u}\unito\rto{\phi^0}\phi\unito\rto{\phi(\overline{f})}\phi Y
  \bigr].
  \]
  The diagram
  \[
  \begin{xy}
    (-40,9)*+{\phi\und\mcC(Y;Z)}="1";
    (0,9)*+{\phi\und\mcC(\unito;Z)}="2";
    (40,9)*+{\phi\und\mcC(;Z)}="3";
    (55,9)*+{\phi Z}="4";
    (-40,-9)*+{\und\mcD(\phi Y;\phi Z)}="5";
    (0,-9)*+{\und\mcD(\phi\unito;\phi Z)}="6";
    (40,-9)*+{\und\mcD(;\phi Z)}="7";
    (55,-9)*+{\phi Z}="8";
    {\ar@{->}^-{\phi\und\mcC(\overline{f};1)} "1";"2"};
    {\ar@{->}^-{\phi\und\mcC(u;1)} "2";"3"};
    {\ar@{=} "3";"4"};
    {\ar@{->}_-{\hat\phi} "1";"5"};
    {\ar@{->}^-{\hat\phi} "2";"6"};
    {\ar@{=} "4";"8"};
    {\ar@{->}^-{\und\mcD(\phi(\overline{f});1)} "5";"6"};
    {\ar@{->}^-{\und\mcD(u\cdot\phi^0;1)} "6";"7"};
    {\ar@{=} "7";"8"};
    {\ar@{->}@/^{2.5pc}/^-{\phi\und\mcC(f;1)} "1";"4"};
    {\ar@{->}@/_{2.5pc}/^-{\und\mcD(Ff;1)} "5";"8"};
  \end{xy}
  \]
  commutes. Indeed, the left square commutes by the naturality of
  \(\hat\phi\), while the commutativity of the right square is a
  consequence of the axiom CF2, see~\eqref{equ-axiom-CF2}.
\end{proof}

With the notation of \lemref{lem-aux-identities}, we can rewrite the
commutativity condition in diagram~\eqref{dia-def-F} as a recursive
formula for the multigraph morphism \(F\):
\[
Ff=\varphi^\mcD(F((\varphi^\mcC)^{-1}(f))\cdot\hat\phi)
=\varphi^\mcD(F\langle f\rangle\cdot\hat\phi),
\]
for each \(f:X_1,\dots,X_n\to Y\) with \(n\ge1\), or equivalently
\begin{equation}
  \langle Ff\rangle=\bigl[ \phi X_2,\dots,\phi
  X_n\rto{F\langle f\rangle}\phi\und\mcC(X_1;Y)
  \rto{\hat\phi}\und\mcD(\phi X_1;\phi Y)
  \bigr].
  \label{equ-def-F}
\end{equation}

\begin{lemma}\label{lem-claim-3}
  For each \(X,Y,Z\in\Ob\mcC\), the diagram
  \[
  \begin{xy}
    (-25,9)*+{\phi\und\mcC(X;Y),\phi\und\mcC(Y;Z)}="1";
    (25,9)*+{\phi\und\mcC(X;Z)}="2";
    (-25,-9)*+{\und\mcD(\phi X;\phi Y),\und\mcD(\phi Y;\phi Z)}="3";
    (25,-9)*+{\und\mcD(\phi X;\phi Z)}="4";
    {\ar@{->}^-{F\mu_{\und\mcC}} "1";"2"};
    {\ar@{->}_-{\hat\phi,\hat\phi} "1";"3"};
    {\ar@{->}^-{\hat\phi} "2";"4"};
    {\ar@{->}^-{\mu_{\und\mcD}} "3";"4"};
  \end{xy}
  \]
  commutes.
\end{lemma}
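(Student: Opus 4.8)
The statement is precisely the assertion that the given natural transformation $\hat\phi$ is multiplicative with respect to the compositions $\mu_{\und\mcC}$ and $\mu_{\und\mcD}$, and the plan is to deduce it from the closed functor axiom CF3 by means of the bijection $\langle-\rangle$. Both composites $(F\mu_{\und\mcC})\cdot\hat\phi$ and $(\hat\phi,\hat\phi)\cdot\mu_{\und\mcD}$ are morphisms $\phi\und\mcC(X;Y),\phi\und\mcC(Y;Z)\to\und\mcD(\phi X;\phi Z)$ whose target is an internal $\Hom$\n-object, so by closedness of $\mcD$ it suffices to show that they have equal images under $\langle-\rangle$.

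First I would compute $\langle(F\mu_{\und\mcC})\cdot\hat\phi\rangle$. By \lemref{lem-aux-identities}\,(d) this equals $\langle F\mu_{\und\mcC}\rangle\cdot\und\mcD(\phi\und\mcC(X;Y);\hat\phi)$, and the recursive formula \eqref{equ-def-F} rewrites $\langle F\mu_{\und\mcC}\rangle$ as $F\langle\mu_{\und\mcC}\rangle\cdot\hat\phi$. Now $\langle\mu_{\und\mcC}\rangle=L^X_{YZ}$ by the defining equation \eqref{equ-LXYZ-def}, and since $L^X_{YZ}$ is a unary morphism \lemref{lem-claim-1} gives $F\langle\mu_{\und\mcC}\rangle=\phi L^X_{YZ}$. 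Hence $\langle(F\mu_{\und\mcC})\cdot\hat\phi\rangle$ equals the composite $\phi L^X_{YZ}\cdot\hat\phi\cdot\und\mcD(\phi\und\mcC(X;Y);\hat\phi)$, which is exactly the upper-right path of the CF3 diagram.

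Next I would compute $\langle(\hat\phi,\hat\phi)\cdot\mu_{\und\mcD}\rangle$. Applying \lemref{lem-aux-identities}\,(b) to the two unary morphisms $\hat\phi$ and the composition $\mu_{\und\mcD}$, this equals $\hat\phi\cdot\langle\mu_{\und\mcD}\rangle\cdot\und\mcD(\hat\phi;\und\mcD(\phi X;\phi Z))$, and $\langle\mu_{\und\mcD}\rangle=L^{\phi X}_{\phi Y,\phi Z}$ by \eqref{equ-LXYZ-def} read in $\mcD$. Thus this side equals $\hat\phi\cdot L^{\phi X}_{\phi Y,\phi Z}\cdot\und\mcD(\hat\phi;\und\mcD(\phi X;\phi Z))$, which is precisely the lower-left path of the CF3 diagram.

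Since the two paths of CF3 agree by the closed functor axiom, the two images under $\langle-\rangle$ coincide, and the lemma follows. The argument is entirely bookkeeping with the $\langle-\rangle$ notation; the only points requiring care are the repeated identification $L^X=\langle\mu\rangle$ coming from \eqref{equ-LXYZ-def} and the correct matching of variances when recognizing $\und\mcD(\phi\und\mcC(X;Y);\hat\phi)$ and $\und\mcD(\hat\phi;\und\mcD(\phi X;\phi Z))$ as the arrows $\und\cd(1,\hat\phi)$ and $\und\cd(\hat\phi,1)$ of CF3. I expect no genuine obstacle beyond this, since the whole content is supplied by the already-assumed axiom CF3.
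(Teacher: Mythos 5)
Your argument is correct and is essentially the paper's own proof: reduce to equality under $\langle-\rangle$, expand the left-hand side via \lemref{lem-aux-identities}\,(d) and formula~\eqref{equ-def-F} to get $\phi L^X_{YZ}\cdot\hat\phi\cdot\und\mcD(1;\hat\phi)$, expand the right-hand side via \lemref{lem-aux-identities}\,(b) and $\langle\mu_{\und\mcD}\rangle=L^{\phi X}$ to get $\hat\phi\cdot L^{\phi X}\cdot\und\mcD(\hat\phi;1)$, and recognize the resulting equation as axiom CF3. The only cosmetic difference is that you make explicit the identifications $\langle\mu_{\und\mcC}\rangle=L^X_{YZ}$ and $F\langle\mu_{\und\mcC}\rangle=\phi L^X_{YZ}$ (via \lemref{lem-claim-1}), which the paper uses silently.
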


\begin{proof}
  It suffices to prove the equation
  \[
  \langle F\mu_{\und\mcC}\cdot\hat\phi\rangle
  =\langle(\hat\phi,\hat\phi)\cdot\mu_{\und\mcD}\rangle.
  \]
  By \lemref{lem-aux-identities},(c), the left hand side is equal to
  \[
  \phi\und\mcC(Y;Z)\rto{\langle F\mu_{\und\mcC}\rangle}\und\mcD(\phi\und\mcC(X;Y);\phi\und\mcC(X;Z))
  \rto{\und\mcD(1;\hat\phi)}\und\mcD(\phi\und\mcC(X;Y);\und\mcD(\phi X;\phi Z)),
  \]
  while the right hand side is equal to
  \begin{equation*}
    \phi\und\mcC(Y;Z)\rto{\hat\phi}\und\mcD(\phi Y;\phi Z)
    \rto{\langle\mu_{\und\mcD}\rangle}
    \und\mcD(\und\mcD(\phi X;\phi Y);\und\mcD(\phi Y;\phi Z))
    \rto{\und\mcD(\hat\phi;1)}
    \und\mcD(\phi\und\mcC(X;Y);\und\mcD(\phi X;\phi Z))
  \end{equation*}
  by \lemref{lem-aux-identities},(b). Note that
  \(\langle\mu_{\und\mcD}\rangle=(\varphi^\mcD)^{-1}(\mu_{\und\mcD})=L^{\phi X}\).
  Furthermore, by \eqref{equ-def-F},
  \begin{align*}
    \langle F\mu_{\und\mcC}\rangle&=\bigl[
    \phi\und\mcC(Y;Z)\rto{\phi\langle\mu_{\und\mcC}\rangle}
    \phi\und\mcC(\und\mcC(X;Y);\und\mcC(X;Z))
    \rto{\hat\phi}\und\mcD(\phi\und\mcC(X;Y);\phi\und\mcC(X;Z)) \bigr]
    \\
    &=\bigl[ \phi\und\mcC(Y;Z)\rto{\phi
      L^X}\phi\und\mcC(\und\mcC(X;Y);\und\mcC(X;Z))
    \rto{\hat\phi}\und\mcD(\phi\und\mcC(X;Y);\phi\und\mcC(X;Z))
    \bigr],
  \end{align*}
  therefore the equation in question is simply the axiom CF3.
\end{proof}

\begin{proposition}
  The multigraph morphism \(F:\mcC\to\mcD\) is a multifunctor, and its
  underlying closed functor is \(\Phi\).
\end{proposition}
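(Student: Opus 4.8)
The plan is to verify that $F$ satisfies the two multifunctor axioms---preservation of identities and of composition---and then to read off its underlying closed functor. Preservation of identities is immediate: for $X\in\Ob\mcC$ the morphism $1^\mcC_X$ is unary, so by \lemref{lem-claim-1} and the functoriality of $\phi$ we get $F1^\mcC_X=\phi(1_X)=1_{\phi X}=1^\mcD_{FX}$. The substance of the proof is preservation of composition,
\[
F\bigl((f_1,\dots,f_n)\cdot g\bigr)=(Ff_1,\dots,Ff_n)\cdot Fg .
\]

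I would first isolate the special case of post-composition with a unary morphism: $F(p\cdot q)=Fp\cdot\phi q$ whenever $q$ is unary. This is proved by induction on the arity of $p$, the step being $\langle F(p\cdot q)\rangle=F\langle p\cdot q\rangle\cdot\hat\phi=F\bigl(\langle p\rangle\cdot\und\mcC(X^1;q)\bigr)\cdot\hat\phi$ via \eqref{equ-def-F} and \lemref{lem-aux-identities},(d), with the base case $\operatorname{arity}(p)=0$ coming from \eqref{equ-def-F-empty-source} and the identity $\overline{p\cdot q}=\overline p\cdot q$; the matching reduces to the naturality of $\hat\phi$ in its second variable. The general statement I would then attack by induction on the total arity $N=k_1+\dots+k_n$ of the source, and, for fixed $N$, on the number $n$ of factors. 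Since $\mcD$ is closed, for $N\ge1$ it suffices to compare the two sides after applying $\langle-\rangle$; \eqref{equ-def-F} turns the left side into $F\langle(f_1,\dots,f_n)\cdot g\rangle\cdot\hat\phi$, and \lemref{lem-aux-identities} rewrites the bracketed composite according to $k_1$. If $k_1=0$, part~(a) reduces the composite to one with $n-1$ factors (same $N$), the residual being handled by \lemref{lem-claim-2} and the unary case above. If $k_1\ge1$, part~(c) applies, and I would rearrange the result, using associativity of multicategorical composition, into a single composition into $\mu_{\und\mcC}$, namely $\bigl(\langle f_1\rangle,\,(f_2,\dots,f_n)\cdot\langle g\rangle\bigr)\cdot\mu_{\und\mcC}$. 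The case $N=0$ is treated directly from \eqref{equ-def-F-empty-source} by iterating part~(a).

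The main obstacle is exactly the bookkeeping of this induction: one must guarantee that in every branch the hypothesis is invoked only at strictly smaller complexity. The decisive point is the associativity rearrangement in the case $k_1\ge1$: its two inputs $\langle f_1\rangle$ and $(f_2,\dots,f_n)\cdot\langle g\rangle$ have arities $k_1-1$ and $N-k_1$, both below $N$, so the inductive hypothesis distributes $F$ over them; \lemref{lem-claim-3} then converts $F\mu_{\und\mcC}\cdot\hat\phi$ into $(\hat\phi,\hat\phi)\cdot\mu_{\und\mcD}$, and \eqref{equ-def-F} reassembles the outcome into $\langle(Ff_1,\dots,Ff_n)\cdot Fg\rangle$, whose own decomposition is governed by the same part~(c). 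Without this rearrangement one is led to apply $F$ to a factor whose arity need not have decreased, and the induction fails to close.

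Finally, granting that $F$ is a multifunctor, I would identify its underlying closed functor with $\Phi$. Its underlying ordinary functor sends $X$ to $\phi X$ and, by \lemref{lem-claim-1}, acts as $\phi$ on unary morphisms, hence coincides with $\phi$. Since $\overline u=1_\unito$, formula \eqref{equ-def-F-empty-source} gives $Fu=u\cdot\phi^0$, so $\overline{Fu}=\phi^0$, which is precisely the unit component assigned by \propref{prop-multifun-cl-fun}. For the closing transformation, note that $\langle\ev^\mcC_{X;Y}\rangle=1^\mcC_{\und\mcC(X;Y)}$; hence by \eqref{equ-def-F} and preservation of identities $\und F_{X;Y}=\langle F\ev^\mcC_{X;Y}\rangle=F(1^\mcC_{\und\mcC(X;Y)})\cdot\hat\phi=\hat\phi_{X,Y}$. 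Thus $\und F=\hat\phi$, and by \propref{prop-multifun-cl-fun} the underlying closed functor of $F$ is exactly $\Phi=(\phi,\hat\phi,\phi^0)$.
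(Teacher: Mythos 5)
Your proof is correct, and it uses the same machinery as the paper's: the recursive formula \eqref{equ-def-F}, the preliminary Lemmas~\ref{lem-claim-1}--\ref{lem-claim-3}, the case analysis on \(k_1\) via \lemref{lem-aux-identities}, and the same identification of \(\phi\), \(\hat\phi\), \(\phi^0\) at the end. Where you genuinely differ is in the organization of the induction. The paper inducts lexicographically on \((n,k_1)\): it first proves the cases \(n=1\) (Lemma~\ref{lem-claim-5}) and \(n=2\) (Lemma~\ref{lem-claim-6}) separately, and in the branch \(k_1>1\) of the general case it keeps the composite in the form \((\langle f_1\rangle,f_2,\dots,f_n)\cdot\bigl((1,\langle g\rangle)\cdot\mu_{\und\mcC}\bigr)\), applying the inductive hypothesis to the outer \(n\)-factor composite (whose first factor has smaller arity) and delegating the inner two-factor composite \((1,\langle g\rangle)\cdot\mu_{\und\mcC}\) to the already-established case \(n=2\). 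You instead induct on \((N,n)\) with \(N=k_1+\dots+k_n\), and your associativity rearrangement into \(\bigl(\langle f_1\rangle,(f_2,\dots,f_n)\cdot\langle g\rangle\bigr)\cdot\mu_{\und\mcC}\) is exactly what makes that scheme close: without it the inner composite \((1,\langle g\rangle)\cdot\mu_{\und\mcC}\) has total arity \(n\), which need not be smaller than \(N\) (e.g.\ when all \(k_i=1\)), as you correctly note. The payoff of your version is that the separate \(n=2\) lemma becomes unnecessary (you only need Lemma~\ref{lem-claim-3}, i.e.\ \(F\mu_{\und\mcC}\cdot\hat\phi=(\hat\phi,\hat\phi)\cdot\mu_{\und\mcD}\), established up front); the cost is the extra appeal to associativity of multicategorical composition. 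Both inductions are well-founded, and the two arguments are otherwise step-for-step parallel.
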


\begin{proof}
  Trivially, \(F\) preserves identities since so does \(\phi\). Let us
  prove that \(F\) preserves composition. The proof is in three steps.

  \begin{lemma}\label{lem-claim-5}
    \(F\) preserves composition of the form
    \(X_1,\dots,X_k\rto{f}Y\rto{g}Z\).
  \end{lemma}

  \begin{proof}
    The proof is by induction on \(k\). There is nothing to prove in the
    case \(k=1\). Suppose that \(k=0\) and we are given composable
    morphisms
    \[
    ()\rto{f}X\rto{g}Y.
    \]
    Then since \(u\cdot\overline{fg}=f\cdot g=(u\cdot\overline{f})\cdot g=u\cdot(\overline{f}\cdot
    g)\), it follows that \(\overline{f\cdot g}=\overline{f}\cdot g\). By
    formula~\eqref{equ-def-F-empty-source},
    \[
    F(f\cdot g)=u\cdot\phi^0\cdot\phi(\overline{f\cdot
      g})=u\cdot\phi^0\cdot\phi(\overline{f}\cdot
    g)=u\cdot\phi^0\cdot\phi(\overline{f})\cdot\phi(g)=Ff\cdot Fg.
    \]
    Suppose that \(k>1\). Then
    \begin{alignat*}{3}
      \langle F(f\cdot g)\rangle&=F\langle f\cdot
      g\rangle\cdot\hat\phi & \quad &
      \textup{(formula~\eqref{equ-def-F})}
      \\
      &=F(\langle f\rangle\cdot\und\mcC(1;g))\cdot\hat\phi & \quad &
      \textup{(\lemref{lem-aux-identities},(c))}
      \\
      &=F\langle f\rangle\cdot\phi\und\mcC(1;g)\cdot\hat\phi & \quad &
      \textup{(induction hypothesis)}
      \\
      &=F\langle f\rangle\cdot\hat\phi\cdot\und\mcD(1;\phi(g)) & \quad &
      \textup{(naturality of \(\hat\phi\))}
      \\
      &=\langle Ff\rangle\cdot\und\mcD(1;Fg) & \quad &
      \textup{(formula~\eqref{equ-def-F})}
      \\
      &=\langle Ff\cdot Fg\rangle, & \quad &
      \textup{(\lemref{lem-aux-identities},(c))}
    \end{alignat*}
    and induction goes through.
  \end{proof}

  \begin{lemma}\label{lem-claim-6}
    \(F\) preserves composition of the form
    \(X^1_1,\dots,X^{k_1}_1,X^1_2,\dots,X^{k_2}_2\rto{f_1,f_2}Y_1,Y_2\rto{g}Z\).
  \end{lemma}

  \begin{proof}
    The proof is by induction on \(k_1\). If \(k_1=0\), then by
    \lemref{lem-aux-identities},(a),
    \[
    (f_1,f_2)\cdot g=\bigl[
    X^1_2,\dots,X^{k_2}_2\rto{f_2}Y_2\rto{\langle g\rangle}
    \und\mcC(Y_1;Z)\rto{\und\mcC(f_1;1)}\und\mcC(;Z)=Z \bigr],
    \]
    therefore
    \begin{alignat*}{3}
      F((f_1,f_2)\cdot g)&=Ff_2\cdot \phi\langle g\rangle\cdot
      \phi\und\mcC(f_1;1) & \quad & \textup{(\lemref{lem-claim-5})}
      \\
      &=Ff_2\cdot\phi\langle g\rangle\cdot\hat\phi\cdot\und\mcD(\phi(f_1);1) &
      \quad & \textup{(\lemref{lem-claim-2})}
      \\
      &=Ff_2\cdot\langle Fg\rangle\cdot\und\mcD(Ff_1;1) & \quad &
      \textup{(formula~\eqref{equ-def-F})}
      \\
      &=(Ff_1,Ff_2)\cdot Fg. & \quad &
      \textup{(\lemref{lem-aux-identities},(a))}
    \end{alignat*}
    If \(k_1=1\), then by \lemref{lem-aux-identities},(b),
    \[
    \langle(f_1,f_2)\cdot g\rangle=\bigl[
    X^1_2,\dots,X^{k_2}_2\rto{f_2}Y_2\rto{\langle g\rangle}
    \und\mcC(Y_1;Z)\rto{\und\mcC(f_1;1)}\und\mcC(X^1_1;Z)
    \bigr],
    \]
    therefore
    \begin{alignat*}{3}
      \langle F((f_1,f_2)\cdot g)\rangle &=F\langle(f_1,f_2)\cdot
      g\rangle\cdot\hat\phi & \quad &
      \textup{(formula~\eqref{equ-def-F})}
      \\
      &=Ff_2\cdot\phi\langle
      g\rangle\cdot\phi\und\mcC(f_1;1)\cdot\hat\phi 
      & \quad & \textup{(\lemref{lem-claim-5})}
      \\
      &=Ff_2\cdot\phi\langle g\rangle\cdot\hat\phi\cdot\und\mcD(\phi(f_1);1) &
      \quad & \textup{(naturality of \(\hat\phi\))}
      \\
      &=Ff_2\cdot\langle Fg\rangle\cdot\und\mcD(Ff_1;1) & \quad &
      \textup{(formula~\eqref{equ-def-F})}
      \\
      &=\langle(Ff_1,Ff_2)\cdot Fg\rangle, & \quad &
      \textup{(\lemref{lem-aux-identities},(b))}
    \end{alignat*}
    and hence \(F((f_1,f_2)\cdot g)=(Ff_1,Ff_2)\cdot Fg\). Suppose that
    \(k_1>1\).  Then by \lemref{lem-aux-identities},(c)
    \(\langle(f_1,f_2)\cdot g\rangle\) is equal to
    \begin{equation*}
      \bigl[
      X^2_1,\dots,X^{k_1}_1,X^1_2,\dots,X^{k_2}_2\rto{\langle
        f_1\rangle,f_2}
      \und\mcC(X^1_1;Y_1),Y_2
      \rto{1,\langle g\rangle}\und\mcC(X^1_1;Y_1),\und\mcC(Y_1;Z)
      \rto{\mu_{\und\mcC}}\und\mcC(X^1_1;Z)
      \bigr],
    \end{equation*}
    therefore
    \begin{alignat*}{3}
      \langle F((f_1,f_2)\cdot g)\rangle &=F\langle(f_1,f_2)\cdot
      g\rangle\cdot\hat\phi & \quad &
      \textup{(formula~\eqref{equ-def-F})}
      \\
      &=(F\langle f_1\rangle,Ff_2)\cdot F((1,\langle
      g\rangle)\mu_{\und\mcC})\cdot\hat\phi & \quad &
      \textup{(induction hypothesis)}
      \\
      &=(F\langle f_1\rangle,Ff_2)\cdot(1,F\langle g\rangle)\cdot
      F\mu_{\und\mcC}\cdot\hat\phi & \quad & \textup{(case \(k_1=1\))}
      \\
      &=(F\langle f_1\rangle,Ff_2)\cdot(1,F\langle
      g\rangle)\cdot(\hat\phi,\hat\phi)\cdot\mu_{\und\mcD} & \quad &
      \textup{(\lemref{lem-claim-3})}
      \\
      &=(F\langle f_1\rangle\cdot\hat\phi,Ff_2)\cdot(1,F\langle
      g\rangle\cdot\hat\phi)\cdot\mu_{\und\mcD}
      \\
      &=(\langle Ff_1\rangle,Ff_2)\cdot(1,\langle
      Fg\rangle)\cdot\mu_{\und\mcD} & \quad &
      \textup{(formula~\eqref{equ-def-F})}
      \\
      &=\langle(Ff_1,Ff_2)\cdot Fg\rangle, & \quad &
      \textup{(\lemref{lem-aux-identities},(c))}
    \end{alignat*}
    hence \(F((f_1,f_2)\cdot g)=(Ff_1,Ff_2)\cdot Fg\), and the lemma is
    proven.
  \end{proof}

  \begin{lemma}
    \(F\) preserves composition of the form
    \begin{equation}
      X^1_1,\dots,X^{k_1}_1,\dots,X^1_n,\dots,X^{k_n}_n\rto{f_1,\dots,f_n}Y_1,\dots,Y_n\rto{g}Z.
      \label{equ-f1-fn-g}
    \end{equation}
  \end{lemma}

  \begin{proof}
    The proof is by induction on \(n\), and for a fixed \(n\) by
    induction on \(k_1\). We have worked out the cases \(n=1\) and
    \(n=2\) explicitly in Lemmas~\ref{lem-claim-5} and
    \ref{lem-claim-6}. Assume that \(F\) preserves an arbitrary
    composition of the form
    \[
    U^1_1,\dots,U^{l_1}_1,\dots,U^1_{n-1},\dots,U^{l_{n-1}}_{n-1}\rto{p_1,\dots,p_{n-1}}V_1,\dots,V_{n-1}\rto{q}W,
    \]
    and suppose we are given composite~\eqref{equ-f1-fn-g}. We do
    induction on \(k_1\). If \(k_1=0\), then by
    \lemref{lem-aux-identities},(a) \((f_1,\dots,f_n)\cdot g\) is equal
    to
    \begin{equation*}
      \bigl[
      X^1_2,\dots,X^{k_2}_2,\dots,X^1_n,\dots,X^{k_n}_n\rto{f_2,\dots,f_n}
      Y_2,\dots,Y_n\rto{\langle g\rangle}
      \und\mcC(Y_1;Z)\rto{\und\mcC(f_1;1)}\und\mcC(;Z)=Z
      \bigr],
    \end{equation*}
    therefore
    \begin{alignat*}{3}
      F((f_1,\dots,f_n)\cdot g)&=(Ff_2,\dots,Ff_n)\cdot F(\langle
      g\rangle\cdot\und\mcC(f_1;1)) & \quad & \textup{(induction
        hypothesis)}
      \\
      &=(Ff_2,\dots,Ff_n)\cdot(F\langle
      g\rangle\cdot\phi\und\mcC(f_1;1)) & \quad &
      \textup{(\lemref{lem-claim-5})}
      \\
      &=(Ff_2,\dots,Ff_n)\cdot(F\langle
      g\rangle\cdot\hat\phi\cdot\und\mcD(\phi(f_1);1)) & \quad &
      \textup{(\lemref{lem-claim-2})}
      \\
      &=(Ff_2,\dots,Ff_n)\cdot(\langle Fg\rangle\cdot\und\mcD(Ff_1;1))
      & \quad & \textup{(formula~\eqref{equ-def-F})}
      \\
      &=(Ff_1,\dots,Ff_n)\cdot Fg. & \quad &
      \textup{(\lemref{lem-aux-identities},(a))}
    \end{alignat*}
    Suppose that \(k_1=1\). Then by \lemref{lem-aux-identities},(b)
    \(\langle(f_1,\dots,f_n)\cdot g\rangle\) is equal to
    \begin{equation*}
      \bigl[
      X^1_2,\dots,X^{k_2}_2,\dots,X^1_n,\dots,X^{k_n}_n\rto{f_2,\dots,f_n}
      Y_2,\dots,Y_n\rto{\langle g\rangle}
      \und\mcC(Y_1;Z)\rto{\und\mcC(f_1;1)}\und\mcC(X^1_1;Z)
      \bigr],
    \end{equation*}
    therefore
    \begin{alignat*}{3}
      \langle F((f_1,\dots,f_n)\cdot
      g)\rangle&=F\langle(f_1,\dots,f_n)\cdot g\rangle\cdot\hat\phi &
      \quad & \textup{(formula~\eqref{equ-def-F})}
      \\
      &=(Ff_2,\dots,Ff_n)\cdot F(\langle
      g\rangle\cdot\und\mcC(f_1;1))\cdot\hat\phi & \quad &
      \textup{(induction hypothesis)}
      \\
      &=(Ff_2,\dots,Ff_n)\cdot
      F\langle g\rangle\cdot\phi\und\mcC(f_1;1)\cdot\hat\phi & \quad &
      \textup{(\lemref{lem-claim-5})}
      \\
      &=(Ff_2,\dots,Ff_n)\cdot
      F\langle g\rangle\cdot\hat\phi\cdot\und\mcD(\phi(f_1);1) & \quad &
      \textup{(naturality of \(\hat\phi\))}
      \\
      &=(Ff_2,\dots,Ff_n)\cdot\langle Fg\rangle\cdot\und\mcD(Ff_1;1) & \quad &
      \textup{(formula~\eqref{equ-def-F})}
      \\
      &=\langle(Ff_1,\dots,Ff_n)\cdot Fg\rangle, & \quad &
      \textup{(\lemref{lem-aux-identities},(b))}
    \end{alignat*}
    and hence \(F((f_1,\dots,f_n)\cdot g)=(Ff_1,\dots,Ff_n)\cdot
    Fg\). Suppose that \(k_1>1\), then by
    \lemref{lem-aux-identities},(c) \(\langle(f_1,\dots,f_n)\cdot
    g\rangle\) is equal to
    \begin{align*}
      X^2_1,\dots,X^{k_1}_1,X^1_2,\dots,X^{k_2}_2,\dots,X^1_n,\dots,X^{k_n}_n
      &\rto{\langle f_1\rangle,f_2,\dots,f_n}
      \und\mcC(X^1_1;Y_1),Y_2,\dots,Y_n
      \\
      &\rto[\hphantom{\langle f_1\rangle,f_2,\dots,f_n}]{1,\langle
        g\rangle}\und\mcC(X^1_1;Y_1),\und\mcC(Y_1;Z)
      \\
      &\rto[\hphantom{\langle
        f_1\rangle,f_2,\dots,f_n}]{\mu_{\und\mcC}}\und\mcC(X^1_1;Z),
    \end{align*}
    therefore
    \begin{alignat*}{3}
      \langle F((f_1,\dots,f_n)\cdot g)\rangle &=
      F\langle(f_1,\dots,f_n)\cdot g\rangle\cdot\hat\phi & \quad &
      \textup{(formula~\eqref{equ-def-F})}
      \\
      &=(F\langle f_1\rangle,Ff_2,\dots,Ff_n)\cdot F((1,\langle
      g\rangle)\mu_{\und\mcC})\cdot\hat\phi & \quad &
      \textup{(induction hypothesis)}
      \\
      &=(F\langle f_1\rangle,Ff_2,\dots,Ff_n)\cdot(1,F[g])\cdot
      F\mu_{\und\mcC}\cdot\hat\phi & \quad &
      \textup{(\lemref{lem-claim-6})}
      \\
      &=(F\langle f_1\rangle,Ff_2,\dots,Ff_n)\cdot(1,F\langle
      g\rangle)\cdot(\hat\phi,\hat\phi)\cdot\mu_{\und\mcD} & \quad &
      \textup{(\lemref{lem-claim-3})}
      \\
      &=(F\langle
      f_1\rangle\cdot\hat\phi,Ff_2,\dots,Ff_n)\cdot(1,F\langle
      g\rangle\cdot\hat\phi)\cdot\mu_{\und\mcD} & \quad &
      \\
      &=(\langle Ff_1\rangle,Ff_2,\dots,Ff_n)\cdot(1,\langle
      Fg\rangle)\cdot\mu_{\und\mcD} & \quad &
      \textup{(formula~\eqref{equ-def-F})}
      \\
      &=\langle(Ff_1,\dots,Ff_n)\cdot Fg\rangle, & \quad &
      \textup{(\lemref{lem-aux-identities},(c))}
    \end{alignat*}
    hence \(F((f_1,\dots,f_n)\cdot g)=(Ff_1,\dots,Ff_n)\cdot Fg\), and
    induction goes through.
  \end{proof}

  Thus we have proven that \(F:\mcC\to\mcD\) is a multifunctor. By
  construction, its underlying functor is \(\phi\). Furthermore, the
  closing transformation \(\und{F}_{X;Y}\) coincides with
  \(\hat{\phi}_{X,Y}:\phi\und\mcC(X;Y)\to\und\mcD(\phi X;\phi
  Y)\). Indeed, notice that \(\und{F}_{X,Y}=\langle
  F\ev^\mcC\rangle\), where \(\ev^\mcC:X,\und\mcC(X;Y)\to Y\) is the
  evaluation morphism. Further, by formula~\eqref{equ-def-F},
  \[
  \und{F}_{X,Y}=\langle
  F\ev^\mcC\rangle=\phi\langle\ev^\mcC\rangle\cdot\hat{\phi}_{X,Y}
  =\hat{\phi}_{X,Y},
  \]
  since \(\langle\ev^\mcC\rangle = 1
  :\und\mcC(X;Y)\to\und\mcC(X;Y)\). Finally,
  \[
  Fu=\bigl[
  ()\rto{u}\unito\rto{\phi^0}\phi\unito
  \bigr].
  \]
  Indeed, by formula~\eqref{equ-def-F-empty-source},
  \[
  Fu=\bigl[
  ()\rto{u}\unito\rto{\phi^0}\phi\unito\rto{\phi(\overline{u})}\phi\unito
  \bigr]
  =\bigl[
  ()\rto{u}\unito\rto{\phi^0}\phi\unito
  \bigr],
  \]
  since \(\overline{u}=1:\unito\to\unito\). Thus we conclude that
  \(F:\mcC\to\mcD\) is a multifunctor whose underlying closed functor is
  \(\Phi\). The proposition is proven.
\end{proof}

\subsection{The injectivity of \(U\) on 1\n-morphisms.} The following
proposition shows that the \(\Cat\)\n-functor \(U\) is injective on
1\n-morphisms.

\begin{proposition}
  Let \(F,G:\mcC\to\mcD\) be multifunctors between closed multicategories
  with unit objects. Suppose that \(F\) and \(G\) induce the same closed
  functor \(\Phi=(\phi,\hat\phi,\phi^0)\) between the underlying closed
  categories. Then \(F=G\).
\end{proposition}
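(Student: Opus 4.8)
The plan is to prove that $F$ and $G$ coincide on objects and on every morphism set $\mcC(X_1,\dots,X_n;Y)$; since a multifunctor is nothing but this data, it will follow that $F=G$. The verification proceeds by induction on the arity $n$. Agreement on objects is immediate, because $F$ and $G$ share the same underlying functor $\phi$; for the same reason they agree on unary morphisms, $F_{X;Y}=\phi_{X,Y}=G_{X;Y}$, which disposes of the case $n=1$.

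For the base case $n=0$ I would use the unit object. The hypothesis that $F$ and $G$ induce the same $\phi^0$ means, by \propref{prop-multifun-cl-fun}, that $\phi^0=\overline{Fu}=\overline{Gu}$; since $\overline{(-)}$ is determined by the equation $u\cdot\overline{g}=g$, this gives $Fu=u\cdot\phi^0=Gu$. Any $f:()\to Y$ factors as $f=u\cdot\overline{f}$ with $\overline{f}:\unito\to Y$ a morphism of $\mcC$ independent of the chosen multifunctor (\remref{rem-C1X-C0X-iso}), so multifunctoriality yields $Ff=Fu\cdot\phi(\overline{f})=Gu\cdot\phi(\overline{f})=Gf$.

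The inductive step rests on the observation that \emph{every} multifunctor satisfies the recursion $\langle Ff\rangle=F\langle f\rangle\cdot\und{F}_{X_1;Y}$ for $f:X_1,\dots,X_n\to Y$ with $n\ge1$. This is exactly the instance $m=1$, $(Y_1,\dots,Y_n)=(X_2,\dots,X_n)$, $Z=Y$ of \propref{prop-F-D1undF-phi-phi-F} evaluated at $\langle f\rangle=(\varphi^\mcC)^{-1}(f)$: the left-bottom path of diagram~\eqref{equ-F-underline-phi} returns $Ff$, while the top-right path returns $\varphi^\mcD(F\langle f\rangle\cdot\und{F}_{X_1;Y})$, so bijectivity of $\varphi^\mcD$ gives the claimed formula. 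Now $\langle f\rangle:X_2,\dots,X_n\to\und\mcC(X_1;Y)$ has arity $n-1$, whence $F\langle f\rangle=G\langle f\rangle$ by the inductive hypothesis; moreover $\und{F}_{X_1;Y}=\hat\phi_{X_1,Y}=\und{G}_{X_1;Y}$ because $F$ and $G$ induce the same closing transformation. Therefore $\langle Ff\rangle=\langle Gf\rangle$, and a second appeal to the bijectivity of $\varphi^\mcD$ gives $Ff=Gf$.

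The argument is mostly bookkeeping once this recursion is available; the one point that requires care is that the formula $\langle Ff\rangle=F\langle f\rangle\cdot\und{F}_{X_1;Y}$ must be applied to the \emph{arbitrary} multifunctor $G$ as well as to $F$, and not only to the specific multifunctor constructed in the surjectivity part of the proof. This is legitimate precisely because \propref{prop-F-D1undF-phi-phi-F} is established for a general multifunctor.
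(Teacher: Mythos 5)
Your proof is correct and follows essentially the same route as the paper: the same induction on arity, the same treatment of the nullary case via $Fu=u\cdot\phi^0=Gu$, and the same inductive step, which the paper phrases as the commutative diagram obtained from \propref{prop-F-D1undF-phi-phi-F} and you phrase as the equivalent recursion $\langle Ff\rangle=F\langle f\rangle\cdot\und{F}_{X_1;Y}$. Your closing remark that this recursion holds for an arbitrary multifunctor, not just the one built in the surjectivity argument, is exactly the point the paper relies on as well.
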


\begin{proof}
  By assumption, the underlying functors of the multifunctors \(F\) and
  \(G\) are the same and are equal to the functor \(\phi\). Let us prove
  that \(Ff=Gf\), for each \(f:X_1,\dots,X_n\to Y\). The proof is by
  induction on \(n\). There is nothing to prove if \(n=1\). Suppose that
  \(n=0\), i.e., \(f\) is a morphism \(()\to Y\). Then since \(F\) and
  \(G\) are multifunctors,
  \[
  Ff=F(u\cdot\overline{f})=Fu\cdot F\overline{f},\qquad
  Gf=G(u\cdot\overline{f})=Gu\cdot G\overline{f}.
  \]
  Since \(F\) and \(G\) coincide on morphisms with one source object, it
  follows that \(F\overline{f}=G\overline{f}\). Furthermore,
  \[
  Fu=\bigl[
  ()\rto{u}\unito\rto{\phi^0}F\unito=G\unito
  \bigr]=Gu,
  \]
  hence \(Ff=Gf\). The induction step follows from the commutative
  diagram
  \[
  \begin{xy}
    (-30,18)*+{\mcC(X_2,\dots,X_n;\und\mcC(X_1;Y))}="1";
    (30,18)*+{\mcD(\phi X_2,\dots,\phi X_n;\phi\und\mcC(X_1;Y))}="2";
    (30,0)*+{\mcD(\phi X_2,\dots,\phi X_n;\und\mcD(\phi X_1;\phi Y))}="3";
    (-30,-18)*+{\mcC(X_1,\dots,X_n;Y)}="4";
    (30,-18)*+{\mcD(\phi X_1,\dots,\phi X_n;\phi Y)}="5";
    {\ar@{->}^-{F} "1";"2"};
    {\ar@{->}_-{\varphi^\mcC}^-\wr "1";"4"};
    {\ar@{->}^-{\mcD(1;\hat\phi)} "2";"3"};
    {\ar@{->}^-{\varphi^\mcD}_-\wr "3";"5"};
    {\ar@{->}^-{F} "4";"5"};
  \end{xy}
  \]
  and a similar diagram for \(G\), which are particular cases of
  \propref{prop-F-D1undF-phi-phi-F}.
\end{proof}

\subsection{The bijectivity of \(U\) on 2-morphisms.} 
The following proposition implies that \(U\) is bijective on
2\n-morphisms.

\begin{proposition}
  Let \(F,G:\mcC\to\mcD\) be multifunctors between closed
  multicategories with unit objects. Denote by
  \(\Phi=(\phi,\hat\phi,\phi^0)\) and \(\Psi=(\psi,\hat\psi,\psi^0)\)
  the corresponding closed functors. Let \(r:\Phi\to\Psi\) be a closed
  natural transformation. Then \(r\) is also a multinatural
  transformation \(F\to G:\mcC\to\mcD\).
\end{proposition}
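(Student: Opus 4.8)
The goal is to verify the multinaturality equation
\[
Ff\cdot r_Y=(r_{X_1},\dots,r_{X_n})\cdot Gf
\]
for every morphism \(f:X_1,\dots,X_n\to Y\) of \(\mcC\); note that the components satisfy \(r_X\in\mcD(FX;GX)=\mcD(\phi X;\psi X)\) since \(\phi,\psi\) are the underlying functors of \(F,G\). I would argue by induction on \(n\). For the base case \(n=0\), write \(f=u\cdot\overline f\) with \(\overline f:\unito\to Y\), so that \(Ff=u\cdot\phi^0\cdot\phi(\overline f)\) and \(Gf=u\cdot\psi^0\cdot\psi(\overline f)\) by \eqref{equ-def-F-empty-source}. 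The naturality of \(r:\phi\to\psi\) on \(\overline f\) gives \(\phi(\overline f)\cdot r_Y=r_\unito\cdot\psi(\overline f)\), and the axiom CN1 gives \(\phi^0\cdot r_\unito=\psi^0\); combining these yields \(Ff\cdot r_Y=u\cdot\psi^0\cdot\psi(\overline f)=Gf\), as required (the right-hand side being \(Gf\) because the sequence of \(r\)'s is empty).

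For the inductive step I would take \(n\ge1\), assume the claim for all morphisms with fewer source objects, and exploit that the bracket \(\langle-\rangle\) (taken in \(\mcD\)) is a bijection, so that it is enough to prove \(\langle Ff\cdot r_Y\rangle=\langle(r_{X_1},\dots,r_{X_n})\cdot Gf\rangle\). Computing the left-hand side by \lemref{lem-aux-identities},~(d) and the right-hand side by \lemref{lem-aux-identities},~(b) should give
\[
\langle Ff\cdot r_Y\rangle=\langle Ff\rangle\cdot\und\mcD(\phi X_1;r_Y),\qquad
\langle(r_{X_1},\dots,r_{X_n})\cdot Gf\rangle=(r_{X_2},\dots,r_{X_n})\cdot\langle Gf\rangle\cdot\und\mcD(r_{X_1};\psi Y).
\]
Next I would substitute \(\langle Ff\rangle=F\langle f\rangle\cdot\hat\phi\) and \(\langle Gf\rangle=G\langle f\rangle\cdot\hat\psi\), which hold by \propref{prop-F-D1undF-phi-phi-F} (equivalently by \eqref{equ-def-F}) because \(\hat\phi=\und F\) and \(\hat\psi=\und G\) are the closing transformations, and then apply the induction hypothesis to \(\langle f\rangle:X_2,\dots,X_n\to\und\mcC(X_1;Y)\), which has \(n-1\) source objects. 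This rewrites the right-hand side as \(F\langle f\rangle\cdot r_{\und\mcC(X_1;Y)}\cdot\hat\psi\cdot\und\mcD(r_{X_1};\psi Y)\).

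At this point both brackets carry the common prefactor \(F\langle f\rangle\), and the equality reduces to the identity
\[
\hat\phi\cdot\und\mcD(\phi X_1;r_Y)=r_{\und\mcC(X_1;Y)}\cdot\hat\psi\cdot\und\mcD(r_{X_1};\psi Y).
\]
The key observation is that this is exactly the square CN2 for the closed natural transformation \(r\) (with \(X=X_1\)), once one translates \(\und\cd(1,r_Y)=\und\mcD(\phi X_1;r_Y)\) and \(\und\cd(r_{X_1},1)=\und\mcD(r_{X_1};\psi Y)\) for the internal \(\Hom\)\n-functor of the underlying closed category. Precomposing CN2 with \(F\langle f\rangle\) therefore equates the two brackets, and injectivity of \(\langle-\rangle\) delivers \(Ff\cdot r_Y=(r_{X_1},\dots,r_{X_n})\cdot Gf\), closing the induction.

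The genuine content lives entirely in the inductive step; the hypothesis that \(r\) is closed enters in precisely two spots, CN1 in the base case and CN2 in the inductive step. I expect the main obstacle to be purely organizational: keeping the sources, targets, and the order of factors in the bracket calculus straight, and recognizing that after peeling off one object via \lemref{lem-aux-identities} and invoking the induction hypothesis the residual coherence equation is literally CN2. No computation beyond \lemref{lem-aux-identities} and \propref{prop-F-D1undF-phi-phi-F} should be required.
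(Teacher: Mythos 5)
Your proof is correct and follows essentially the same route as the paper's: induction on the number of source objects, with CN1 and the naturality of \(r\) disposing of the case \(n=0\), and the bracket calculus of \lemref{lem-aux-identities} together with \(\langle Ff\rangle=F\langle f\rangle\cdot\hat\phi\), the induction hypothesis applied to \(\langle f\rangle\), and CN2 handling the inductive step. The only cosmetic difference is that you fold the case \(n=1\) into the induction (which works, since the induction hypothesis for \(n=0\) covers \(\langle f\rangle:()\to\und\mcC(X_1;Y)\)) whereas the paper notes it is immediate from the naturality of \(r\).
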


\begin{proof}
  We must prove that, for each \(f:X_1,\dots,X_n\to Y\), the equation
  \[
  Ff\cdot r_Y=(r_{X_1},\dots,r_{X_n})\cdot Gf
  \]
  holds true. The proof is by induction on \(n\). Suppose that
  \(n=0\), and that \(f\) is a morphism \(()\to Y\). The axiom CN1
  \[
  \bigl[ \unito\rto{\phi^0}F\unito\rto{r_\unito}G\unito \bigr]=\psi^0
  \]
  implies
  \[
  \bigl[ ()\rto{Fu}F\unito\rto{r_\unito}G\unito \bigr]=Gu.
  \]
  It follows that
  \[
  Ff\cdot r_Y=Fu\cdot F\overline{f}\cdot r_Y=Fu\cdot r_\unito\cdot
  G\overline{f}=Gu\cdot G\overline{f}=Gf,
  \]
  where the second equality is due to the naturality of \(r\). There
  is nothing to prove in the case \(n=1\). Suppose that \(n>1\). It
  suffices to prove that
  \[ \langle Ff\cdot r_Y\rangle=\langle(r_{X_1},\dots,r_{X_n})\cdot
  Gf\rangle:FX_2,\dots,FX_n\to\und\mcD(FX_1;GY).
  \]
  By \lemref{lem-aux-identities},(c), the left hand side expands out
  as \(\langle Ff\rangle\cdot\und\mcD(1;r_Y)\), which by
  formula~\eqref{equ-def-F} is equal to \(F\langle
  f\rangle\cdot\hat\phi\cdot\und\mcD(1;r_Y)\). By
  \lemref{lem-aux-identities},(b), the right hand side of the equation
  in question is equal to \((r_{X_2},\dots,r_{X_n})\cdot\langle
  Gf\rangle\cdot\und\mcD(r_{X_1};1)\), which by
  formula~\eqref{equ-def-F} is equal to \((r_{X_2},\dots,r_{X_n})\cdot
  G\langle f\rangle\cdot\hat\psi\cdot\und\mcD(r_{X_1};1)\). By the induction
  hypothesis, the latter is equal to
  \[
  F\langle f\rangle\cdot
  r_{\und\mcC(X_1;Y)}\cdot\hat\psi\cdot\und\mcD(r_{X_1};1).
  \]
  The required equation follows then from the axiom CN2.
\end{proof}

\subsection{The essential surjectivity of \(U\).}

Let us prove that for each closed category \(\cv\) there is a closed
multicategory \(\mcV\) with a unit object whose underlying closed
category is isomorphic to \(\cv\). First of all, notice that by
\thmref{thm-cl-cat-iso-EK} we may (and we shall) assume in what
follows that \(\cv\) is a closed category in the sense of Eilenberg
and Kelly; i.e., that \(\cv\) is equipped with a functor
\(V:\cv\to\Set\) such that
\(V\und\cv(-,-)=\cv(-,-):\cv^\op\times\cv\to\Set\) and the axiom CC5'
is satisfied. In particular, we can use the whole theory of closed
categories developed in \cite{EK} without any modifications. We are
now going to construct a closed multicategory \(\mcV\) with a unit
object whose underlying closed category is isomorphic to \(\cv\).  The
construction is based on ideas of Laplaza's paper \cite{Laplaza}.

We recall that for each object \(X\) of the category \(\cv\) we have a
\(\cv\)\n-functor \(L^X:\und\cv\to\und\cv\), and for each \(f\in
V\und\cv(X,Y)=\cv(X,Y)\) there is a \(\cv\)\n-natural transformation
\(L^f:L^Y\to L^X:\und\cv\to\und\cv\), uniquely determined by the
condition \((V(L^f)_Y)1_Y=f\), see Examples~\ref{ex-V-LX},
\ref{exa-Lf}, \ref{ex-Lf-repr}, or \cite[Section~9]{EK}. Moreover, by
\cite[Proposition~9.2]{EK} the assignments \(X\mapsto L^X\) and
\(f\mapsto L^f\) determine a fully faithful functor from the category
\(\cv^\op\) to the category \(\VCat(\und\cv,\und\cv)\) of
\(\cv\)\n-functors \(\und\cv\to\und\cv\) and their \(\cv\)\n-natural
transformations. For us it is more convenient to write it as functor
from \(\cv\) to \(\VCat(\und\cv,\und\cv)^\op\). Note that the latter
category is strict monoidal with the tensor product given by
composition of \(\cv\)\n-functors. More precisely, the tensor product
of \(F\) and \(G\) in the given order is \(FG=F\cdot G=G\circ
F\). Consider the multicategory associated with
\(\VCat(\und\cv,\und\cv)^\op\) (see \exaref{ex:monoidal-cat-multicat})
and consider its full submulticategory whose objects are
\(\cv\)\n-functors \(L^X\), \(X\in\Ob\cv\). That is, in essence, our
\(\mcV\). More precisely, \(\Ob\mcV=\Ob\cv\) and
\[
\mcV(X_1,\dots,X_n;Y)=\VCat(\und\cv,\und\cv)^\op(L^{X_1}\cdot\ldots\cdot L^{X_n},L^Y)
=\VCat(\und\cv,\und\cv)(L^Y,L^{X_n}\circ\dots\circ L^{X_1}).
\]
Identities and composition coincide with those of the multicategory
associated with the strict monoidal category
\(\VCat(\und\cv,\und\cv)^\op\). Note that by \propref{prop-repr}
there is a bijection
\[
\Gamma:\mcV(X_1,\dots,X_n;Y)\to(V\circ L^{X_n}\circ\dots\circ
L^{X_1})Y,\quad f\mapsto (Vf_Y)1_Y.
\]

\begin{theorem}
  The multicategory \(\mcV\) is closed and has a unit object. The
  underlying closed category of \(\mcV\) is isomorphic to \(\cv\).
\end{theorem}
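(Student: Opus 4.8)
The plan is to establish the three assertions in turn, with the representation bijection $\Gamma$ of \propref{prop-repr} as the principal tool. For closedness I would first guess the internal $\Hom$\n-objects and evaluations and then invoke \propref{prop-undCXZ-implies-closedness}, which reduces the verification to the one\n-variable case. Set $\und\mcV(X;Z)=\und\cv(X,Y)$, more precisely $\und\mcV(X;Z)=\und\cv(X,Z)$; this is an object of $\cv$, hence of $\mcV$, so the construction stays within $\mcV$ (this is exactly the point of Laplaza's idea). Since $\Gamma$ identifies $\mcV(X,\und\cv(X,Z);Z)$ with $V(L^{\und\cv(X,Z)}\circ L^X)(Z)=\cv(\und\cv(X,Z),\und\cv(X,Z))$, I would define the evaluation by $\ev^\mcV_{X;Z}=\Gamma^{-1}(1_{\und\cv(X,Z)})$.

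The crux is the bijectivity of $\varphi^\mcV_{X;Y_1,\dots,Y_n;Z}$. The key observation is that $\Gamma$ identifies \emph{both} $\mcV(Y_1,\dots,Y_n;\und\cv(X,Z))$ and $\mcV(X,Y_1,\dots,Y_n;Z)$ with the \emph{same} set $V(L^{Y_n}\circ\dots\circ L^{Y_1})(\und\cv(X,Z))$: indeed $L^XZ=\und\cv(X,Z)$, so the two iterated $\cv$\n-functors, namely $L^{Y_n}\circ\dots\circ L^{Y_1}$ evaluated at $\und\cv(X,Z)$ and $L^{Y_n}\circ\dots\circ L^{Y_1}\circ L^X$ evaluated at $Z$, agree on the nose. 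I would then show that under these two identifications $\varphi^\mcV$ becomes the identity, i.e. $\Gamma\circ\varphi^\mcV=\Gamma$; bijectivity of $\varphi^\mcV$ is immediate from this. To prove $\Gamma\circ\varphi^\mcV=\Gamma$ one unfolds $\varphi^\mcV(g)=(1_X,g)\cdot\ev^\mcV$ as a horizontal composite of $\cv$\n-natural transformations in $\VCat(\und\cv,\und\cv)$ and applies the naturality of the representation isomorphism together with the normalization $\Gamma(\ev^\mcV)=1$. Checking that the multicategory composition of $\mcV$ is compatible with $\Gamma$ in this way is the main obstacle; everything else is comparatively formal. This yields that $\mcV$ is closed with $\und\mcV(X;Z)=\und\cv(X,Z)$.

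For the unit object I would take $\unito$ itself, equipped with the morphism $u\in\mcV(;\unito)=\VCat(\und\cv,\und\cv)(L^\unito,\Id)$ given by the inverse of the $\cv$\n-natural isomorphism $i$ (here the empty composite is the monoidal unit $\Id$ of $\VCat(\und\cv,\und\cv)^\op$, and $i^{-1}$ is $\cv$\n-natural because $i$ is, see \cite{EK}). One then computes $\und\mcV(u;X)\colon\und\cv(\unito,X)\to X$ and checks that it equals $i_X^{-1}$, which is invertible; hence $u$ exhibits $\unito$ as a unit object of $\mcV$. Note that this simultaneously identifies the morphism $i$ of the underlying closed category with that of $\cv$.

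Finally, for the underlying closed category, \propref{prop-cl-multicat-cl-cat} produces a closed category $\cc$ with $\Ob\cc=\Ob\cv$, $\cc(X,Y)=\mcV(X;Y)=\VCat(\und\cv,\und\cv)(L^Y,L^X)$, and $\und\cc(X,Y)=\und\cv(X,Y)$. I would define $\Lambda\colon\cv\to\cc$ to be the identity on objects and $f\mapsto L^f$ on morphisms; by \cite[Proposition~9.2]{EK} the assignments $X\mapsto L^X$, $f\mapsto L^f$ are fully faithful, so $\Lambda$ is an isomorphism of categories. It remains to verify that $\Lambda$ is an isomorphism of closed categories, i.e. that it carries $\und\cv(-,-)$, $i$, $j$, and $L$ to the corresponding data of $\cc$ furnished by \propref{prop-cl-multicat-cl-cat}; this is a routine comparison using the explicit formulas of that proposition and the standard identities for $L^X$ and $L^f$ from \cite{EK}. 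Combining the three parts gives the theorem.
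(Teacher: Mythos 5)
Your proposal is correct and follows essentially the same route as the paper: the same choice of $\und\mcV(X;Z)=\und\cv(X,Z)$ and of $\ev^\mcV_{X;Z}$ normalized by $\Gamma(\ev^\mcV_{X;Z})=1_{\und\cv(X,Z)}$, the same reduction of bijectivity of $\varphi^\mcV$ to the commuting triangle $\Gamma\circ\varphi^\mcV=\Gamma$ via \propref{prop-undCXZ-implies-closedness} and \propref{prop-repr}, the same unit $u=i^{-1}$ with $\und\mcV(u;1)=L^{i^{-1}_X}$, and the same isomorphism $f\mapsto L^f$ justified by \cite[Proposition~9.2]{EK}. The only substantive content you defer as ``routine comparison'' is the explicit identification of $i$, $j$, and $L$ of the underlying closed category with $L^{i_X}$, $L^{j_X}$, and $L^{L^X_{YZ}}$, which the paper carries out by applying $\Gamma$ to the defining equations and which uses the axiom CC5' at the step identifying $j_X$.
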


\begin{proof}
  First, let us check that the multicategory \(\mcV\) is closed. By
  \propref{prop-undCXZ-implies-closedness}, it suffices to prove that for
  each pair of objects \(X\) and \(Z\) there exist an internal
  \(\Hom\)\n-object \(\und\mcV(X;Z)\) and an evaluation morphism
  \(\ev^\mcV_{X;Z}:X,\und\mcV(X;Z)\to Z\) such that the map
  \[
  \varphi:\mcV(Y_1,\dots,Y_n;\und\mcV(X;Z))\to\mcV(X,Y_1,\dots,Y_n;Z),
  \quad f\mapsto(1_X,f)\cdot\ev^\mcV_{X;Z},
  \label{equ-varphi-X-Y1Yn-Z}
  \]
  is bijective, for each sequence of objects \(Y_1\), \dots,
  \(Y_n\). We set \(\und\mcV(X;Z)=\und\cv(X,Z)\). The evaluation map
  \(\ev^\mcV_{X;Z}:X,\und\mcV(X;Z)\to Z\) is by definition a
  \(\cv\)\n-natural transformation \(L^Z\to L^{\und\cv(X,Z)}\circ
  L^X\).  We define it by requesting
  \((V(\ev^\mcV_{X;Z})_Z)1_Z=1_{\und\cv(X,Z)}\) (we extensively use
  the representation theorem for \(\cv\)\n-functors in the form of
  \propref{prop-repr}). Let us check that the map \(\varphi\) is
  bijective. Note that the codomain of \(\varphi\) identifies via the
  map \(\Gamma\) with the set \((V\circ L^{Y_n}\circ\dots\circ
  L^{Y_1}\circ L^X)Z\), and that the domain of \(\varphi\) identifies
  via \(\Gamma\) with the set
  \[
  (V\circ L^{Y_n}\circ\dots\circ L^{Y_1})\und\cv(X,Z)=(V\circ L^{Y_n}\circ\dots\circ L^{Y_1}\circ L^X)Z.
  \]
  The bijectivity of \(\varphi\) follows readily from the diagram
  \[
  \begin{xy}
    (-25,9)*+{\mcV(Y_1,\dots,Y_n;\und\mcV(X;Z))}="1";
    (25,9)*+{\mcV(X,Y_1,\dots,Y_n;Z)}="2";
    (0,-9)*+{(V\circ L^{Y_n}\circ\dots\circ L^{Y_1}\circ L^X)Z}="3";
    {\ar@{->}^-{\varphi} "1";"2"};
    {\ar@{->}_-{\Gamma} "1";"3"};
    {\ar@{->}^-{\Gamma} "2";"3"};
  \end{xy}
  \]
  whose commutativity we are going to establish. Indeed, take an element
  \(f\in\mcV(Y_1,\dots,Y_n;\und\mcV(X;Z))\), i.e., a \(\cv\)\n-natural
  transformation \(f:L^{\und\cv(X,Z)}\to L^{Y_n}\circ\dots\circ
  L^{Y_1}\). Then \(\varphi(f)\) is given by the composite
  \[
  L^Z\rto{\ev^\mcV_{X;Z}}L^{\und\cv(X,Z)}\circ L^X\rto{f
    L^X}L^{Y_n}\circ\dots\circ L^{Y_1}\circ L^X.
  \]
  Therefore, \(\Gamma\varphi(f)\) is equal to
  \[
  \bigl(V((f L^X)\circ\ev^\mcV_{X;Z})_Z\bigr)1_Z=\bigl(V(f
  L^X)_Z\bigr)\bigl(V\ev^\mcV_{X;Z}\bigr)1_Z=(Vf_{\und\cv(X,Z)})1_{\und\cv(X,Z)}=\Gamma(f).
  \]
  Thus we conclude that \(\mcV\) is a closed multicategory. 

  Let us check that \(\unito\in\Ob\cv\) is a unit object of \(\mcV\). By
  definition, a morphism \(u:()\to\unito\) is a \(\cv\)\n-natural
  transformation \(L^\unito\to\Id\). We let it be equal to \(i^{-1}\),
  which is a \(\cv\)\n-natural transformation by
  \cite[Proposition~8.5]{EK}. Then for each object \(X\) of \(\cv\)
  holds
  \[
  \und\mcV(u;1)=(u,1)\cdot\ev^\mcV_{\unito;X}:\und\mcV(\unito;X)\to X,
  \]
  i.e., \(\und\mcV(u;1)\) is the \(\mcV\)\n-natural transformation
  \[
  L^X\rto{\ev^\mcV_{\unito;X}}L^{\und\cv(\unito;X)}\circ
  L^\unito\rto{L^{\und\cv(\unito,X)}u}L^{\und\cv(\unito,X)}.
   \]
  We claim that it coincides with \(L^{i^{-1}_X}\) and hence is
  invertible. Indeed, applying \(\Gamma\) to the above composite we
  obtain
  \[
  \bigl(V((L^{\und\cv(\unito,X)}u)\circ\ev^\mcV_{\unito;X})_X\bigr)1_X=
  \bigl(V(L^{\und\cv(\unito,X)}u)_X\bigr)\bigl(V(\ev^\mcV_{\unito;X})_X\bigr)1_X
  =\cv(\und\cv(\unito,X),u_X)1_{\und\cv(\unito,X)}=u_X=i^{-1}_X.
  \]

  Let us now describe the underlying closed category of the closed
  multicategory \(\mcV\). Its objects are those of \(\cv\), and for each
  pair of objects \(X\) and \(Y\) the set of morphisms from \(X\) to
  \(Y\) is \(\mcV(X;Y)=\VCat(\und\cv,\und\cv)(L^Y,L^X)\). The unit object
  is \(\unito\) and the internal \(\Hom\)\n-object \(\und\mcV(X;Y)\)
  coincides with \(\und\cv(X,Y)\). For each object \(X\), the identity
  morphism \(1^{\und\mcV}_X:()\to\und\mcV(X;X)\), i.e., a
  \(\cv\)\n-natural transformation \(L^{\und\cv(X,X)}\to\Id\), is found
  from the equation
  \[
  \bigl[
  X\rto{1_X,1^{\und\mcV}_X}X,\und\mcV(X;X)\rto{\ev^\mcV_{X;X}}X
  \bigr]=1_X,
  \]
  or equivalently from the equation
  \[
  \bigl[
  L^X\rto{\ev^\mcV_{X;X}}L^{\und\cv(X,X)}\circ L^X\rto{1^{\und\mcV}_X L^X}L^X
  \bigr]=\id.
  \]
  Applying \(\Gamma\) to both sides we find that
  \[
  \bigl(V((1^{\und\mcV}_X L^X)\circ\ev^\mcV_{X;X})_X\bigr)1_X=\bigl(V(1^{\und\mcV}_X)_{\und\cv(X,X)}\bigr)\bigl(V(\ev^\mcV_{X;X})_X\bigr)1_X
  =V(1^{\und\mcV}_X)_{\und\cv(X,X)}1_{\und\cv(X,X)}=1_X.
  \]
  Here
  \(V(1^{\und\mcV}_X)_{\und\cv(X,X)}:\cv(\und\cv(X,X),\und\cv(X,X))\to\cv(X,X)\).
  The morphism \(j_X\) of the underlying closed category of \(\mcV\) is a
  \(\cv\)\n-natural transformation \(L^{\und\cv(X,X)}\to L^\unito\); it
  is found from the equation
  \[
  \bigl[
  L^{\und\cv(X,X)}\rto{j_X}L^\unito\rto{u}\Id
  \bigr]=1^{\und\mcV}_X.
  \]
  Applying \(\Gamma\) to both sides we obtain
  \[
  \bigl(V(u\circ j_X)_{\und\cv(X,X)}\bigr)1_{\und\cv(X,X)}=V(1^{\und\mcV}_X)_{\und\cv(X,X)}1_{\und\cv(X,X)},
  \]
  i.e.,
  \[
  \bigl(Vi^{-1}_{\und\cv(X,X)}\bigr)\bigl(V(j_X)_{\und\cv(X,X)}1_{\und\cv(X,X)}\bigr)=1_X,
  \]
  or equivalently
  \[
  (V(j_X)_{\und\cv(X,X)})1_{\und\cv(X,X)}=(Vi_{\und\cv(X,X)})1_X=j_X,
  \]
  where the last equality is the axiom CC5'. Therefore,
  \(j_X=L^{j_X}:L^{\und\cv(X,X)}\to L^\unito\). It follows from the
  construction that \(i_X\) for the underlying closed category of
  \(\mcV\) is \((\und\mcV(u;1))^{-1}=(L^{i^{-1}_X})^{-1}=L^{i_X}\).

  Let us compute the morphism
  \(L^X_{YZ}:\und\mcV(Y;Z)\to\und\mcV(\und\mcV(X;Y);\und\mcV(X;Z))\).
  Before we do that, note that \(\ev^\mcV_{X;Y}:X,\und\mcV(X;Y)\to Y\) is
  the \(\cv\)\n-natural transformation \(L^Y\to L^{\und\cv(X,Y)}\circ
  L^X\) with components
  \[
  (\ev^\mcV_{X;Y})_Z=L^X_{YZ}:\und\cv(Y,Z)\to\und\cv(\und\cv(X,Y),\und\cv(X,Z)).
  \]
  In other words, \(\ev^\mcV_{X;Y}=L^X_{Y,-}\). Indeed, applying
  \(\Gamma\) to both side of the equation in question we obtain an
  equivalent equation
  \[
  (V(\ev^\mcV_{X;Y})_Y)1_Y=(VL^X_{YY})1_Y.
  \]
  Since \(VL^X=\cv(X,-)\), it follows that
  \((VL^X_{YY})1_Y=1_{\und\cv(X,Y)}\), so that the obtained equation is
  just the definition of \(\ev^\mcV_{X;Y}\).

  The morphism
  \(L^X_{YZ}:\und\mcV(Y;Z)\to\und\mcV(\und\mcV(X;Y);\und\mcV(X;Z))\) is
  found from the equation
  \begin{multline*}
    \bigl[X,\und\mcV(X;Y),\und\mcV(Y;Z)\rto{1,1,L^X_{YZ}}X,\und\mcV(X;Y),\und\mcV(\und\mcV(X;Y);\und\mcV(X;Z))
    \\
    \hfill
    \rto{1,\ev^\mcV_{\und\mcV(X;Y);\und\mcV(X;Z)}}X,\und\mcV(X;Z)\rto{\ev^\mcV_{X;Z}}Z\bigr]
    \quad
    \\
    =\bigl[
    X,\und\mcV(X;Y),\und\mcV(Y;Z)\rto{\ev^\mcV_{X;Y},1}Y,\und\mcV(Y;Z)\rto{\ev^\mcV_{Y;Z}}Z
    \bigr],
  \end{multline*}
  or equivalently
  \begin{multline*}
    \bigl[
    L^Z\rto{\ev^\mcV_{X;Z}}L^{\und\cv(X,Z)}\circ L^X\rto{\ev^\mcV_{\und\cv(X,Y);\und\cv(X,Z)}L^X}
    L^{\und\cv(\und\cv(X,Y),\und\cv(X,Z))}\circ L^{\und\cv(X,Y)}\circ L^X
    \\
    \hfill
    \rto{L^X_{YZ} L^{\und\cv(X,Y)} L^X}L^{\und\cv(Y,Z)}\circ
    L^{\und\cv(X,Y)}\circ L^X
    \bigr]
    \quad
    \\
    =\bigl[
    L^Z\rto{\ev^\mcV_{Y;Z}}L^{\und\cv(Y,Z)}\circ L^Y\rto{L^{\und\cv(Y,Z)}\ev^\mcV_{X;Y}}
    L^{\und\cv(Y,Z)}\circ L^{\und\cv(X,Y)}\circ L^X
    \bigr].
  \end{multline*}
  Applying \(\Gamma\) to both side of the above equation we obtain
  \begin{multline*}
    \bigl(V(L^X_{YZ})_{\und\cv(\und\cv(X,Y),\und\cv(X,Z))}\bigr)
    \bigl(V(\ev^\mcV_{\und\cv(X,Y);\und\cv(X,Z)})_{\und\cv(X,Z)}\bigr)\bigl(V(\ev^\mcV_{X;Z})_Z\bigr)1_Z
    \\
    =\cv(\und\cv(Y,Z),(\ev^\mcV_{X;Y})_Z)(V(\ev^\mcV_{Y;Z})_Z)1_Z,
  \end{multline*}
  or equivalently
  \[
  \bigl(V(L^X_{YZ})_{\und\cv(\und\cv(X,Y),\und\cv(X,Z))}\bigr)1_{\und\cv(\und\cv(X,Y),\und\cv(X,Z))}=
  (\ev^\mcV_{X;Y})_Z=L^X_{YZ}.
  \]
  In other words, \(L^X_{YZ}\) for the underlying closed category of
  \(\mcV\) is \(L^{L^X_{YZ}}\).

  Let us denote the underlying closed category of the multicategory
  \(\mcV\) by the same symbol. There is a closed functor
  \((L,1,1):\cv\to\mcV\), where \(L:\cv\to\mcV\) is given by
  \(X\mapsto X\), \(f\mapsto L^f\), and the morphisms
  \(\und\cv(X,Y)\to\und\mcV(X;Y)\) and \(\unito\to\unito\) are the
  identities. The axioms CF1--CF3 follow readily from the above
  description of the closed category \(\mcV\). Clearly, the functor
  \(L\) is an isomorphism. The theorem is proven.
\end{proof}

\small

\textsc{Department of Mathematics and Statistics, York University,
  4700 Keele Street, To\-ron\-to, ON, Canada, M3J~1P3}

\textit{E-mail address:} \texttt{manzyuk@mathstat.yorku.ca}

\end{document}